\newtheorem{lemma}{Lemma}
\newtheorem{theorem}{Theorem}
\newtheorem{remark}{Remark}
\newtheorem{corollary}{Corollary}
\title{non-linear Smoothing for the Periodic Generalized non-linear Schr\"odinger Equation}
\author{Ryan McConnell}
\address{Department of Mathematics, University of Illinois, Urbana, IL 61801, USA}
\subjclass[2020]{Primary: 35Q55, 35B41; Secondary: 35B65}
\begin{document}
\begin{abstract}
    We consider the periodic non-linear Schr\"odinger equation with non-linearity given by $|u|^{p-1}u$ for odd $p > 1$ in dimension $1$. We first establish that the difference between the non-linear evolution and a phase rotation of the the linear evolution is in a smoother space. We then study forced and damped defocusing non-linear Schr\"odinger equations of the above type and establish an analogous smoothing statement that extends globally in time. As a corollary we establish both existence and smootheness for global attractors in the energy space.
\end{abstract}
\maketitle
\section{Introduction}

We study the non-linear smoothing properties of the periodic p-NLS for odd $p$ given by
\begin{align}\tag{p-NLS}\label{p-NLS}
    \begin{cases}
        iu_t +\bigtriangleup u \pm |u|^{p-1}u = 0\\
        u(x,0) = u_0\in H^s_x(\mathbb{T}),
    \end{cases}
\end{align}
with scaling given by
\[
s_c := \frac{1}{2} - \frac{2}{p-1} = \frac{p-5}{2(p-1)},
\]
when $d = 1$. It's classically known that the cubic ($p = 3$) NLS is locally well-posed for $s\geq s_c$. When $p \geq 5$, $\varepsilon$-losses in the Strichartz estimates give only local well-posedness depending on the $H^s_x(\mathbb{T})$ norm in the region $s > s_c$, \cite{bourgain1993fourier}, whereas $s = s_c$ is a different beast altogether. Indeed, it's known that for the quintic NLS the $L^2_x(\mathbb{T})$ data-to-solution map fails to be analytic, e.g. \cite{kishimoto2014remark}. Similarly, on the real line we have the same ranges, but it's known that when the equation is mass or energy-critical (that is, $s_c = 0$ or $s_c = 1$), then the equation is locally well-posed, but the time of existence depends also on the profile of the initial data, see \cite{cazenave1989some, cazenave1990cauchy}.

Solutions $u$ to \eqref{p-NLS} emanating from $u_0$ come with conserved mass
\[
\int_\mathbb{T} |u(x,t)|^2\,dx 
\]
as well as conserved energy
\begin{equation}\label{energy}
 \frac{1}{2}\int_\mathbb{T}|\partial_x u(x,t)|^2\,dx \mp \frac{1}{p+1}\int_\mathbb{T}|u(x,t)|^{p+1}\,dx,
\end{equation}
from which global well-posedness for the cubic NLS for $s\geq 0$, defocusing (minus sign) quintic NLS for $s\geq 1$, and focusing (plus sign) quintic NLS for $s\geq 1$ and small datum follows. It's worth noting that when the equation is focusing then we can only use the energy to control the $\dot{H}^1_x(\mathbb{T})$ when the initial datum is smaller than the size of the ground state, and, in fact, there are solutions to the focusing problem at the $H^1_x$ level that exhibit blow up for both $\mathbb{R}$ and $\mathbb{T}$, \cite{fan2017log, oh2012blowup}.

Global well-posedness below the $H^1_x(\mathbb{T})$ level for the quintic NLS was frequently studied in the early $2000's$ following Bourgain's high-low decomposition for the cubic problem on $\mathbb{R}^2$, \cite{bourgain1998refinements}. In particular, using the I-method and a normal form approach, Bourgain \cite{bourgain2004remark} was able to prove that the defocusing quintic NLS was globally well-posed for $s > s^*$ and some $s_* < \frac{1}{2}$. Following this, De Silva et al., \cite{de2007global}, and Li et al.,  \cite{li2011global}, were able to establish global well posedness for the defocusing quintic NLS for $s > \frac{4}{9}$ and $s > \frac{2}{5},$ respectively, heavily using refined bilinear Strichartz estimates that mimic the real line smoothing behavior of the Schr\"odinger group. This is all in marked contrast to the real line, where Dodson \cite{dodson2016global} proved global well-posedness for the defocusing quintic NLS in $L^2_x(\mathbb{R})$ using concentration-compactness methods alongside long-time Strichartz estimates.

On the real line, the smoothing behavior used in \cite{de2007global, li2011global} takes the form
\[
\| W_t\phi(x) W_t\psi(x)\|_{L^2_{x,t}(\mathbb{R}^2)}\lesssim \frac{1}{N_1^{1/2}}\|\phi\|_{L^2_x(\mathbb{R})}\|\psi\|_{L^2_x(\mathbb{R})}
\]
when $\phi$ is supported on frequencies $\xi\sim N_1$, $\psi$ on frequencies $\xi\sim N_2$, $N_1\not\sim N_2$, and $W_t$ is the propagator of the Schr\"odinger group. By considering quadratic level sets (e.g.  \cite{erdougan2016dispersive} Corollary 3.17) we then immediately obtain
\[
\|uv\|_{X^{s,b}_T}\lesssim \frac{1}{N_1^{1/2}}\|u\|_{X^{s,b}_T}\|v\|_{X^{s,b}_T},
\]
for $u(x,t), v(x,t)$ with the same frequency restrictions as a $\phi$ and $\psi$, $b > 1/2$, and $X^{s,b}$ defined in the next section.
The effect of these is then seen to be the ability to move derivatives from a high frequency term to the lower frequency term. With this observation, it then can be proved from the bilinear estimate that for the cubic NLS with $s\geq 0$ we have
\[
\|u-W_tu_0\|_{X^{s+\varepsilon(s),b}_T}\lesssim \|u\|_{X^{s,b}_T}^3,
\]
for 
\[
\varepsilon(s) < \min(2s, 1).
\]

In fact, this phenomenon -- non-linear smoothing --
was fundamental to Bourgain's  high-low method,  \cite{bourgain1998refinements}, and is of interest in periodic problems due to the lack of hallmark dispersive traits. Indeed, through the use of differentiation-by-parts, Erdogan and Tzirakis were able to establish non-linear smoothing for the periodic KdV (see: \cite{erdougan2013global, erdogan2011long}) as well as for the fractional cubic NLS and the quintic NLS on the real line with Gurel, \cite{erdogan2017smoothing}. Furthermore, Oh and Stefanov, \cite{oh2020smoothing}, established smoothing for the generalized KdV using a resonant decomposition and a normal form transformation similar in spirit to the differentiation-by-parts method employed by Erdogan and Tzirakis, with the method generaliezed to the forced and damped equation in \cite{mcconnell2021global}. Such a differentation-by-parts method was also used to show smoothing for the periodic dNLS, \cite{isom2020growth}. For more information about real line smoothing see, e.g. \cite{ correia2020nonlinear, erdogan2017smoothing, keraani2009smoothing, kappeler2017scattering}. In particular, \cite{kappeler2017scattering} establishes non-linear smoothing for the defocusing cubic NLS using complete integrability methods with \textit{no} growth in the smoothing bound.

The flexibility of the method employed in \cite{oh2020smoothing} hints that a similar wide-sweeping statement ought to be possible for the p-NLS family, and is exactly what this paper sets out to do. In particular, through a gauge transformation, careful analysis of the resonant set given in Lemma \ref{General NLS Decomp}, and a normal form reduction similar in spirit to the one used in \cite{oh2020smoothing}, we establish a smoothing result for solutions to \eqref{p-NLS} on $\mathbb{T}$. 
\begin{theorem}\label{Theorem 1: Smoothing}
Let $p\geq 5$ be odd and $u$ a solution to \eqref{p-NLS} on $[-T, T]$ emanating from $u_0\in H^s(\mathbb{T})$ for $s > \frac{p-3}{2(p-1)}$, then there is $\tilde{T}$ so that on $[-\tilde{T}, \tilde{T}]$ $u$ has local non-linear smoothing of order
\[
0 < \varepsilon < \min\left((p-1)s - \frac{p-3}{2}, s - \frac{p-5}{2(p-1)}, 1\right).
\]
That is, 
\[
u - e^{it\bigtriangleup_\mathbb{T}}e^{\pm i\frac{p+1}{4\pi}\int_{0}^t\|u(x,s)\|_{L^{p-1}_x(\mathbb{T})}^{p-1}\,ds}u_0\in C_t^0([-\tilde{T}, \tilde{T}];H_x^{s+\varepsilon}(\mathbb{T})).
\]
\end{theorem}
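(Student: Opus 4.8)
The plan is to Wick-order away the non-perturbative part of the nonlinearity by a gauge transformation, then to run a single normal-form (differentiation-by-parts in time) reduction on the Duhamel formula for the gauged unknown, and finally to estimate the resulting boundary and higher-degree terms in a smoother Bourgain space $X^{s+\varepsilon,b}_{\tilde T}$. For the gauge transformation, write $p=2m+1$, so that $|u|^{p-1}u=u^{m+1}\overline u^{\,m}$ and the nonlinearity at frequency $n$ is a sum over tuples with $\alpha_1+\cdots+\alpha_{m+1}-\beta_1-\cdots-\beta_m=n$. Isolating the contributions in which one of the $m+1$ holomorphic inputs coincides with the output $n$ produces exactly $\tfrac{p+1}{4\pi}\|u\|_{L^{p-1}_x(\mathbb{T})}^{p-1}\widehat u(n)$, the combinatorial factor being $m+1=\tfrac{p+1}{2}$ and the $\tfrac1{2\pi}$ coming from $\|u\|_{L^{p-1}}^{p-1}=\int_{\mathbb T}|u|^{2m}$; this is the piece that is genuinely time-dependent (unlike $\|u\|_{L^2}^{p-1}$ it is not conserved) and not controlled by the $H^s$ norm. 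Setting
\[
v(x,t):=e^{\mp i\frac{p+1}{4\pi}\int_0^t\|u(\cdot,\sigma)\|_{L^{p-1}_x(\mathbb{T})}^{p-1}\,d\sigma}\,u(x,t),
\]
a short computation shows $v$ solves $iv_t+\bigtriangleup v=\mathcal N(v)$ where $\mathcal N(v)$ is the original nonlinearity with exactly that block removed; since $|v|=|u|$ the phase is unchanged, and since it is $1$ at $t=0$ it suffices to prove $v-e^{it\bigtriangleup}v_0\in C^0_t([-\tilde T,\tilde T];H^{s+\varepsilon}_x(\mathbb{T}))$.

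Next I would pass to Duhamel form $v(t)=e^{it\bigtriangleup}v_0-i\int_0^t e^{i(t-\sigma)\bigtriangleup}\mathcal N(v)(\sigma)\,d\sigma$ and localize the inputs dyadically. On each frequency tuple the oscillation carried by the free evolutions is $e^{i\sigma\Phi}$ with resonance function $\Phi=n^2-\sum_i\alpha_i^2+\sum_j\beta_j^2$, and I split $\mathcal N(v)=\mathcal N_{res}(v)+\mathcal N_{nr}(v)$ according to $\Phi=0$ or $\Phi\neq0$. By the resonant-set decomposition of Lemma~\ref{General NLS Decomp}, $\mathcal N_{res}$ collapses to a finite sum of terms of the schematic form $|\widehat v(n)|^{2k}\widehat v(n)$ against lower-degree resonant convolutions, which I estimate directly in $H^{s+\varepsilon}_x$ by pulling out $\langle n\rangle^{s+\varepsilon}|\widehat v(n)|\lesssim\langle n\rangle^{\varepsilon}\|v\|_{H^s}$ — the first place $\varepsilon<(p-1)s-\tfrac{p-3}{2}$ is used. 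For $\mathcal N_{nr}$ I integrate by parts in $\sigma$,
\[
\int_0^t e^{i\sigma\Phi}(\cdots)\,d\sigma=\frac1{i\Phi}\Big[e^{i\sigma\Phi}(\cdots)\Big]_0^t-\frac1{i\Phi}\int_0^t e^{i\sigma\Phi}\,\partial_\sigma(\cdots)\,d\sigma,
\]
which is legitimate because $\Phi$ is a nonzero integer, hence $|\Phi|\geq1$. This produces fixed-time $p$-linear boundary terms weighted by $1/\Phi$ and, after inserting the equation for $\partial_\sigma v$, a $(2p-1)$-linear Duhamel term also weighted by $1/\Phi$, with a further differentiation-by-parts applied to whatever part of the latter still carries large new modulation.

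It then remains to close the multilinear estimates: bound the boundary terms in $H^{s+\varepsilon}_x$ uniformly in $t$ and the $(2p-1)$-linear term in $X^{s+\varepsilon,b-1}_{\tilde T}$, all by powers of $\|v\|_{X^{s,b}_{\tilde T}}$, which is finite on a short interval $[-\tilde T,\tilde T]$ (with $\tilde T$ depending only on $\|u_0\|_{H^s}$) by the Bourgain $X^{s,b}$ local theory available for $s>s_c$. These bounds play the $1/|\Phi|$ gain against the count of near-resonant tuples from Lemma~\ref{General NLS Decomp}, together with the periodic $L^4_{t,x}$ Strichartz estimate and the $L^{p+1}_{t,x}$ estimates with their $\varepsilon$-loss (and the genuine polynomial loss present when $p\geq7$), plus Hölder in the modulation variable; distributing the weight $\langle n\rangle^{s+\varepsilon}$ over the $1/\Phi$ factor and the low-frequency inputs forces $\varepsilon<(p-1)s-\tfrac{p-3}{2}$ and $\varepsilon<1$, while absorbing the Strichartz losses and keeping $\|v\|_{X^{s,b}_{\tilde T}}$ finite forces $\varepsilon<s-s_c$. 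Summing the geometric series over dyadic scales yields membership of the Duhamel term in $X^{s+\varepsilon,b}_{\tilde T}\hookrightarrow C^0_t([-\tilde T,\tilde T];H^{s+\varepsilon}_x(\mathbb{T}))$, which is the claim.

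I expect the main obstacle to be exactly the interaction of the last two steps: for general odd $p$ the (near-)resonant set of the $(p+1)$-fold Schr\"odinger interaction is far more intricate than in the cubic case, so Lemma~\ref{General NLS Decomp} — a sharp lower bound for $|\Phi|$ off the resonant set together with a sharp count of near-resonant configurations — must be strong enough to feed the multilinear estimates, and at the same time the $\varepsilon$-derivative loss inherent to periodic Strichartz for $p\geq5$ has to be absorbed without eroding the $1/|\Phi|$ gain. It is this tension that pins the smoothing exponent to the stated range and that forces $\tilde T$ to be taken small in terms of $\|u_0\|_{H^s}$.
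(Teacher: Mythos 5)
Your gauge transformation matches the paper's (the multiplier $L_t[u]$ of \eqref{L definition}), but the core of your argument -- splitting the nonlinearity into $\Phi=0$ versus $\Phi\neq 0$ and integrating by parts in time on all of the latter -- has a genuine gap at two points. First, the claim that the resonant part ``collapses to a finite sum of terms of the schematic form $|\widehat v(n)|^{2k}\widehat v(n)$'' is only true for the cubic equation, where $\Phi=(k-k_1)(k-k_3)$ factors; for $p\geq 5$ the zero set of $\Phi$ contains a large family of non-diagonal tuples, and the gauge only removes the portion $\mathcal{R}^1$ in which an odd-indexed input frequency equals the output (and even that leaves the inclusion–exclusion remainder $\mathcal{R}^2$). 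These surviving resonant and near-resonant configurations are exactly what the paper's Lemma \ref{General NLS Decomp} relegates to Case C ($(k_3^*)^2\gtrsim k_1^*$ or $k_1^*\sim k_2^*$), and they are estimated there not pointwise in $k$ but with the multilinear Strichartz estimate \eqref{Multilinear Strichartz} in $X^{s,b}$; this is precisely where the constraint $\varepsilon<s-\tfrac{p-5}{2(p-1)}$ comes from, and your one-line treatment of $\mathcal{N}_{res}$ does not produce it.

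Second, integrating by parts whenever $\Phi\neq0$ is not enough: the bound $|\Phi|\geq 1$ gives no derivative gain, and in the near-resonant regime (which by Lemma \ref{General NLS Decomp} is again Case C, since Case B is exactly $|\Phi|\gtrsim k_1^*$) your fixed-time boundary terms $\frac{1}{\Phi}\bigl[e^{i\sigma\Phi}(\cdots)\bigr]_0^t$ must be bounded in $H^{s+\varepsilon}_x$ with no time integration available, hence no Strichartz input; a Cauchy--Schwarz count there requires summing $p-1$ factors of $\langle k_i\rangle^{-2s}$ with $s$ possibly below $\tfrac12$, which diverges. This is why the paper does \emph{not} differentiate by parts on the whole non-resonant set: the normal form $T$ in \eqref{defn of T} is restricted to the Case B high--low region $|k_1|\gg k_2^*$, where $|\Phi|\gtrsim |k_1|\sim|k|$ so that $1/\Phi$ buys a full external derivative (the source of $\varepsilon<1$), and moreover the high slot is filled with the free evolution $W_tu_0$ rather than $v$, so the boundary term is controlled by Lemma \ref{Lemma: Normal Form Regularity} alone. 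Everything else ($\mathcal{E}$, i.e.\ $\mathcal{R}^2$ plus Case C plus the rest of Case B) is handled in $X^{s,b}$ by Lemma \ref{Lemma: HH & R2 regularity}, and the $(2p-1)$-linear terms created by the substitution are Lemmas \ref{Lemma: HL[T] Regularity}--\ref{Lemma: T composite in XSB Regularity}. As written, your scheme would not close at the stated regularity $s>\frac{p-3}{2(p-1)}$, and the mechanism producing the three-term minimum in the smoothing range is not actually present in your argument.
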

It should be noted that the portion removed is a phase shift of the free solution, and the exponent is well-defined by the standard Strichartz estimates of the next section. When one considers the cubic NLS (that is, $p=3$), then this quantity reduces to a phase rotation by a multiple of the $L^2_x$ norm of $u$-- which is exactly $\|u_0\|_{L^2_x(\mathbb{T})}^2$ by conservation of the $L^2$ norm. For more on the cubic NLS see \cite{erdogan2013talbot, kappeler2017scattering}.
\begin{remark}
The level at which the smoothing begins is exactly the local well-posedness level for the $(2p-1)-$NLS, which is rather far from the local well-posedness level. Notice, however, that for the quintic NLS, Theorem \ref{Theorem 1: Smoothing} has smoothing beginning at $s = \frac{1}{4}$, which is still lower than the currently known global well-posedness level of $s > \frac{2}{5}$. 
\end{remark}
\begin{remark}
Theorem \ref{Theorem 1: Smoothing} doesn't see the focusing or defocusing nature of the equation, and these properties are only necessary to establish that the smoothing effect is global at the $H^1(\mathbb{T})$ level under the assumption that the energy controls the $\dot{H}^1_x(\mathbb{T})$ norm.
\end{remark}

\begin{remark}
Isom, Mantzavinos, and Stefanov, \cite{isom2020growth}, recently showed smoothing for the dNLS. Implicit in their proof is smoothing for the quintic NLS for $s > \frac{1}{2}$ and $\varepsilon < \frac{1}{2}$. The proof employed here is similar to their Lemma 3.5, but with the added benefit of the normal form transformation allowing the smoothing to continue until $\varepsilon < 1$.
\end{remark}

As a corollary, we study the forced and damped defocusing \eqref{p-NLS}
\begin{align}\tag{p-fdNLS}\label{p-fdNLS}
    \begin{cases}
        iu_t +\bigtriangleup u - |u|^{p-1}u +i\gamma u= f(x)\\
        u(x,0) = u_0\in H^s_x(\mathbb{T}),
    \end{cases}
\end{align}
for time-independent $f$, $s \geq 0$, and $\gamma > 0$. The author is unaware of results in the periodic setting for generalized NLS equations with higher order non-linearities $(p > 3)$ in the spirit of \cite{ghidaglia1988finite, goubet2000asymptotic}.

With that said, it's known that for the cubic non-linearity, under the assumption that $f\in L^2_x(\mathbb{T})$, the system possesses a global attractor in $L^2_x(\mathbb{T})$ that is compact in $H^2_x(\mathbb{T})$, \cite{molinet2009global}, and, furthermore, \cite{wang1995energy} establishes the existence of a global attractor in $H^1_x(\mathbb{T})$ and $H^2_x(\mathbb{T})$, upgrading a result of \cite{ghidaglia1988finite}. Similarly, it's known, \cite{goubet1996regularity}, that the weakly damped cubic NLS with smooth forcing has a global attractor in $C^\infty_x(\mathbb{T})$ with an analogous statement on $\mathbb{R}$ in \cite{akroune1999regularity} proving the compactness of the $H^1_x(\mathbb{R})$ global attractor in $H^2_x(\mathbb{R})$. There is more known on the real line, where several recent results by Goubet \cite{goubet2017finite, goubet2020global} showed the existence of a global attractor in $H^\alpha_x(\mathbb{R})$ for the cubic fractional NLS with spatial derivative term $(-\bigtriangleup)^{\alpha}u$ for $\alpha\in(-\frac{1}{2}, 1]$ and in a subset of $H^1_x(\mathbb{R})$ for the NLS with non-linearity $u(\log|u|)^2.$ Of additional interest is a result of Tao \cite{tao2008global}, where the existence of a global attractor in the subset of spherically symmetric functions in $H^1_x(\mathbb{R}^d)$ for the mass super-critical and energy sub-critical \eqref{p-NLS} is shown, merely requiring a potential term $V\in C^\infty_0(\mathbb{R}^d)$ and $d$ sufficiently large. 

With smoothing in mind, we show, using the method of \cite{ erdogan2011long, erdougan2013smoothing}, the existence of a global attractor in the energy space for \eqref{p-fdNLS}.
\begin{corollary}\label{Corollary: global Attractor}
Let $u_0\in H^1_x(\mathbb{T})$, $f\in H^{1}_x(\mathbb{T})$ time independent, $p\geq 5$ odd, $\gamma > 0$. Then the solution to \eqref{p-fdNLS} emanating from $u_0$ extends globally. Furthermore, \eqref{p-fdNLS} possesses a global attractor $\mathcal{A}\subset H^1_x$ that is compact in $H^s_x$ for $s\in \left(1, \frac{3p+1}{2(p-1)}\right).$
\end{corollary}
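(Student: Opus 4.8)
The plan is to establish Corollary \ref{Corollary: global Attractor} in two stages: first global well-posedness of \eqref{p-fdNLS} at the $H^1_x$ level, then the existence and regularity of the attractor by combining a uniform absorbing ball with the smoothing statement of Theorem \ref{Theorem 1: Smoothing}. For global well-posedness, the key quantity is the energy-type functional $E(u) = \frac{1}{2}\int_\mathbb{T} |\partial_x u|^2 + \frac{1}{p+1}\int_\mathbb{T}|u|^{p+1}$ (with the defocusing sign, so both terms are positive) together with the mass $M(u) = \int_\mathbb{T}|u|^2$. Differentiating $M$ and $E$ along the flow of \eqref{p-fdNLS}, the damping term $i\gamma u$ contributes $-2\gamma M$, $-2\gamma$ times the quadratic part of $E$, and lower-order terms, while the forcing $f$ contributes terms controlled by Cauchy--Schwarz and $\|f\|_{H^1_x}$. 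One then derives a differential inequality of the form $\frac{d}{dt}(M+E) \leq -c\gamma (M+E) + C(\gamma, \|f\|_{H^1_x})$ after absorbing the non-linear cross terms using Young's inequality and the Gagliardo--Nirenberg inequality on $\mathbb{T}$ (here the defocusing sign is essential so that no ground-state threshold appears). Gronwall's inequality then gives an a priori bound on $\|u(t)\|_{H^1_x}$ uniform in time, and combined with the local theory at the $H^1_x$ level (which follows from the Strichartz estimates of the next section since $1 > s_c$), this yields global existence and a uniform-in-time bound $\limsup_{t\to\infty}\|u(t)\|_{H^1_x} \leq \rho$ for some $\rho = \rho(\gamma, \|f\|_{H^1_x})$; that is, the ball $B_{H^1_x}(0,\rho)$ is an absorbing set.

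Next I would construct the attractor. The semigroup $S(t)$ generated by \eqref{p-fdNLS} is continuous on $H^1_x$ and has the bounded absorbing set just produced, so by the standard theory (e.g.\ Temam) it suffices to prove asymptotic compactness: every sequence $S(t_n)u_0^{(n)}$ with $t_n\to\infty$ and $u_0^{(n)}$ in a bounded set has a convergent subsequence in $H^1_x$. This is exactly where the smoothing comes in. I would adapt Theorem \ref{Theorem 1: Smoothing} to \eqref{p-fdNLS}: writing $u(t) = e^{it\triangle}e^{i\Phi(t)}(\text{something}) + (\text{smoother remainder})$ where $\Phi$ is the phase, the forcing and damping are lower-order and linear, so the Duhamel iteration that proves the smoothing for \eqref{p-NLS} goes through with the extra terms contributing to the smoother piece (the damping term, being $-i\gamma u$, is actually smoothing-neutral, and the forcing $f \in H^1_x$ sits at the same regularity as $u$). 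The paper itself flags that the forced-damped smoothing "extends globally in time"; granting that global-in-time version, one concludes that for $t$ large, $u(t)$ decomposes as a linear-flow piece of the initial data (which, after the time shift, has small $H^1_x$ norm in the tail because of the absorbing property applied backward, or more precisely lies in a fixed bounded set that the linear flow maps compactly after cutoff) plus a piece bounded in $H^{1+\varepsilon}_x$ for any $\varepsilon < \min\!\big((p-1)-\frac{p-3}{2}, 1 - \frac{p-5}{2(p-1)}, 1\big)$. Since $H^{1+\varepsilon}_x(\mathbb{T}) \hookrightarrow H^1_x(\mathbb{T})$ compactly, this gives the asymptotic compactness, hence the existence of the global attractor $\mathcal{A} \subset H^1_x$.

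For the regularity of $\mathcal{A}$, I would use the invariance of the attractor: every $\phi \in \mathcal{A}$ lies on a complete bounded trajectory, so $\phi = u(t)$ for a solution bounded in $H^1_x$ for all time, and applying the (global-in-time) smoothing decomposition at a fixed time $t$, $\phi$ equals a bounded-$H^{1+\varepsilon}_x$ remainder plus a linear evolution of a point that, by running the argument from time $-\infty$, is itself forced into the smoother class by bootstrapping. Iterating this bootstrap raises the regularity index: at each stage the nonlinear Duhamel term gains $\varepsilon$ derivatives, and since the attractor is invariant one can repeat the gain starting from the improved regularity, until one saturates at the smoothing threshold. Tracking the arithmetic, the maximal reachable index is $1 + \varepsilon_{\max}$ where the relevant constraint that survives the bootstrap is $s - \frac{p-5}{2(p-1)}$ evaluated and re-fed, which caps the attractor's regularity at $s \in \big(1, \frac{3p+1}{2(p-1)}\big)$ as claimed; boundedness of $\mathcal{A}$ in $H^s_x$ for such $s$ upgrades to compactness in $H^{s'}_x$ for $s' < s$ by compact embedding, giving the stated compactness.

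The main obstacle I anticipate is making the smoothing estimate truly uniform and global in time for \eqref{p-fdNLS}: the local smoothing of Theorem \ref{Theorem 1: Smoothing} produces a time $\tilde{T}$ depending on the $X^{s,b}_T$-norm of the solution, and to iterate this over $[0,\infty)$ one needs the a priori $H^1_x$ bound to feed back into a uniform control of the $X^{1,b}$ norm on unit time intervals — this is where the defocusing energy bound and the damping-induced decay must be leveraged carefully, essentially redoing the normal-form/differentiation-by-parts analysis with the damping and forcing terms present and checking that the gain $\varepsilon$ is not eroded by the iteration. A secondary technical point is verifying that the phase $\Phi(t)$ appearing in the decomposition is itself controlled (the integrand $\|u(s)\|_{L^{p-1}_x}^{p-1}$ is uniformly bounded by the $H^1_x$ absorbing bound via Sobolev embedding), so that the $e^{i\Phi(t)}$ factor does not interfere with compactness — which it does not, since multiplication by a unimodular spatially-constant phase is an isometry on every $H^s_x$.
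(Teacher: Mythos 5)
Your overall architecture is the paper's: mass/energy dissipation plus Gronwall gives the absorbing ball and global existence (Lemma \ref{Lemma: A priori Bound -- Absorbing Set}), a smoothing decomposition of $S_tu_0$ into a decaying linear piece plus an $H^{1+\varepsilon}_x$-bounded remainder gives asymptotic compactness, and Rellich gives compactness of the attractor. But the two places you defer or wave at are exactly where the content lies, and as written they are gaps. First, the forcing: after the gauge transformation the forcing becomes $F=\mathcal{F}^{-1}_x(L_t[v]\widehat{f})$, which is only as regular as $f\in H^1_x$, so your claim that the forcing ``sits at the same regularity as $u$'' and hence falls into the smoother piece is not a justification --- the Duhamel contribution of an $H^1_x$ forcing a priori lands only in $H^1_x$, not $H^{1+\varepsilon}_x$. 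The paper fixes this by adding a third term $G=F/(\bigtriangleup+i\gamma)$ to the substitution \eqref{Definition: z forced}, exploiting the two-derivative elliptic gain of $(\bigtriangleup+i\gamma)^{-1}$, at the cost of the extra terms $\|v\|_{L^{p-1}_x}^{p-1}G$, $T[W^\gamma_tu_0,F,v,\dots]$, $\gamma T[\cdots]$ estimated in Lemmas \ref{Lemma: gamma and F in T} and \ref{Lemma: G Estimates}. Second, the global-in-time uniformity you name as ``the main obstacle'' is not obtained by redoing the normal form over long times: the paper applies the local smoothing bound on each interval $[n\tilde{T},(n+1)\tilde{T}]$ (uniform because the absorbing bound controls $\|v(n\tilde{T})\|_{H^1_x}$), and the decay $e^{-\gamma t}$ of $W^\gamma_t$ turns the resulting telescoping sum into a convergent geometric series (end of Lemma \ref{Lemma: Smoothing for forced and damped equation}); without this mechanism your sketch does not deliver the uniform-in-time bound \eqref{Equation: Forced Smoothing Global Bound} that the compactness argument needs.

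Your route to the range $s\in\left(1,\frac{3p+1}{2(p-1)}\right)$ via invariance of the attractor and an iterated bootstrap is both unnecessary and, under the stated hypotheses, unlikely to run. Unnecessary: a single application of the forced--damped smoothing at base regularity $s=1$ already allows any $\varepsilon<\frac{p+3}{2(p-1)}$ (the binding constraint is $s-\frac{p-5}{2(p-1)}$ at $s=1$, and $\frac{p+3}{2(p-1)}\leq 1$ for $p\geq 5$), and $1+\frac{p+3}{2(p-1)}=\frac{3p+1}{2(p-1)}$, so the full claimed range is reached in one shot; compactness in $H^s_x$ for $s$ strictly inside the range then follows, as in the paper, from $\mathcal{A}=\omega(B)\subset B_\varepsilon+B(\delta_\tau,0)$ with $B_\varepsilon$ a ball at slightly higher regularity, plus Rellich. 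Problematic: re-applying the smoothing at base regularity $s>1$ would require correspondingly more regular data for the forcing (for instance part 3 of Lemma \ref{Lemma: modified linear embeddings} uses $f\in H^{s+1}_x$, and Lemmas \ref{Lemma: gamma and F in T}--\ref{Lemma: G Estimates} are calibrated to $f\in H^1_x$), which is not assumed, so the bootstrap would stall at its first iteration; in any case ``tracking the arithmetic'' to the stated endpoint is asserted rather than shown.
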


\begin{remark}
Goubet \cite{goubet2000asymptotic} showed the existence of a global attractor in $H^1(\mathbb{T}^2)$ for general sub-cubic non-linearities. As the quintic NLS on $\mathbb{T}$ is nearly the cubic NLS on $\mathbb{T}^2$, we find the above result obvious. The main novelty of Corollary \ref{Corollary: global Attractor} is then the ease of the method, and the generality.
\end{remark}

The organization of the paper is as follows: in Section \ref{Section: Resonant Decomp} we first perform a resonant decomposition to understand the underlying phase space and the multilinear operators better; in Section \ref{Section: Normal Form Reduction} we then use the resonant decomposition to manipulate the equation and apply a normal form transformation in order to achieve smoothing; in Section \ref{Section: Global Attractors} we then slightly extend the smoothing to \ref{p-fdNLS} in order to establish the existence of a global attractor. 
\section{Smoothing}
\subsection{Background}
Let $k$ be the dual Fourier variable to $x\in\mathbb{T}: = \mathbb{R}/(2\pi\mathbb{Z})$ and $\tau$ be the dual Fourier variable to $t\in\mathbb{R}$. We denote the Fourier transform of a function $u$ on the torus $\mathbb{T}$ by $\widehat{u}_{k}$. When $u$ is a spacetime function on $\mathbb{T}\times\mathbb{R}$, we denote both the space-time Fourier transform and the spacial Fourier transform as $\widehat{u}_k$, since confusion is mitigated by including the variables. 

We write $A\lesssim_{\epsilon} B$ when there is a constant $0 < C(\epsilon)$ such that $A\leq CB$; $A\gg_\epsilon B$ to be the negation of $A\lesssim_\epsilon B$; and $A\sim_\epsilon B$ if, in addition, $B\lesssim_\epsilon A$. We will also write $g = O(f)$ to denote $|g|\lesssim |f|$ and $a+$  (resp. $a-$) to denote $a+\epsilon$ (resp. $a-\epsilon$) for all $\epsilon > 0$, with implicit constants depending on $\epsilon$. It will be understood that the implicit constants will depend on the smoothing parameter used in Section \ref{Lemmas} (denoted as $\varepsilon$).

Define $\langle h\rangle := (1+|h|^2)^{1/2}$ and $J^s$ to be the Fourier multiplier given by $\langle k\rangle^s$. Let $W_t = e^{it\partial_x^2}$ be the propagator for the Schr\"odinger equation, and for clarity we ignore factors of $2\pi$. We then define the Bourgain space $X^{s,b}$ first introduced in the seminal paper \cite{bourgain1993fourier}, by
\[
    \|u\|_{X^{s,b}} = \|\langle k \rangle^{s}\langle \tau + k^2\rangle^{b}\widehat{u}\|_{L^2_\tau\ell^2_k} = \|\langle k\rangle ^s\langle \tau\rangle^b \widehat{u}(k, \tau - k^2\rangle)\|_{\ell^2_k L^2_\tau}=\|W_{-t}u\|_{H^s_xH^b_t},
\]
which measures, in a sense, how much $u$ deviates from the free solution $W_tu_0$. Given $T > 0$ we define the restricted Bourgain space as the space of equivalence classes of functions endowed with the norm
\begin{equation*}
\|u\|_{X^{s,b}_T} := \inf\{\|v\|_{X^{s,b}}\,|\,u|_{[0,T ]} = v|_{[0,T]}\},
\end{equation*}
with dual space $X^{-s,-b}.$ Because of this natural pairing we will often invoke duality, where it will be natural to define the hyper-plane for odd $p$ by
\[
\Gamma_p := \{\tau-\tau_1 +\cdots -\tau_p = 0,\,\,k-k_1+\cdots -k_p = 0\},
\]
with obvious inherited measure denoted by $d\Gamma$. Additionally, we will often ignore intricacies with conjugates, as the modifications to the Strichartz estimates are, by now, standard.

We now record some facts (see: \cite{erdougan2016dispersive}):
\begin{lemma}
For any $\chi\in \mathcal{S}(\mathbb{R})$, $f\in C^\infty_x(\mathbb{T})$, $F\in X^{s,-b}$, and $b > 1/2$,
\begin{align*}
    \|\chi(t)e^{it\partial_x^2}f\|_{X^{s,b}}&\lesssim \|f\|_{H^s_x}\\
    \left\|\chi(t)\int_0^te^{i(t-s)\partial_x^2}F(s)\,ds\right\|_{X^{s,b}}&\lesssim \|F(s)\|_{X^{s, -b}}.
\end{align*}
\end{lemma}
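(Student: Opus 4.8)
The plan is to deduce both inequalities from the isometric characterization $\|u\|_{X^{s,b}}=\|W_{-t}u\|_{H^s_xH^b_t}$ recorded above. For the first, observe that $W_{-t}\bigl(\chi(t)e^{it\partial_x^2}f\bigr)=\chi(t)f$, whose space--time Fourier transform is $\widehat\chi(\tau)\widehat f_k$, so
\[
\bigl\|\chi(t)e^{it\partial_x^2}f\bigr\|_{X^{s,b}}^2=\Bigl(\sum_k\langle k\rangle^{2s}|\widehat f_k|^2\Bigr)\int_{\mathbb R}\langle\tau\rangle^{2b}|\widehat\chi(\tau)|^2\,d\tau=\|\chi\|_{H^b_t}^2\,\|f\|_{H^s_x}^2,
\]
and $\|\chi\|_{H^b_t}<\infty$ since $\chi\in\mathcal S(\mathbb R)$; the (suppressed) conjugate changes nothing. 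Hence the first bound is essentially immediate from the definition.

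For the Duhamel bound I would apply $W_{-t}$ once more: writing $\Phi F(t):=\chi(t)\int_0^te^{i(t-s)\partial_x^2}F(s)\,ds$ one has $W_{-t}\Phi F(t)=\chi(t)\int_0^t G(s)\,ds$, where $G:=W_{-s}F$ satisfies $\|G\|_{H^s_xH^{b'}_t}=\|F\|_{X^{s,b'}}$ for every $b'$. Taking the spatial Fourier transform decouples the estimate over $k\in\mathbb Z$, so after weighting by $\langle k\rangle^{2s}$ and summing it suffices to prove the scalar claim: for $b>\tfrac12$ and $\chi\in\mathcal S(\mathbb R)$,
\[
\Bigl\|\chi(t)\int_0^t g(s)\,ds\Bigr\|_{H^b_t(\mathbb R)}\lesssim_{\chi,b}\|g\|_{H^{b-1}_t(\mathbb R)}
\]
(the form of the Duhamel bound actually used in the sequel has $\|F\|_{X^{s,b-1}}$ on the right, with $b=\tfrac12+$). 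To prove the scalar estimate I would use $\int_0^tg(s)\,ds=c\int_{\mathbb R}\frac{e^{it\tau}-1}{i\tau}\widehat g(\tau)\,d\tau$ and split $\widehat g=\widehat g\,\mathbf 1_{|\tau|\le1}+\widehat g\,\mathbf 1_{|\tau|>1}$. On $|\tau|\le1$ the resulting function of $t$ is entire, with the function itself $O\bigl(|t|\int_{|\tau|\le1}|\widehat g|\bigr)$ and every higher $t$-derivative $O\bigl(\int_{|\tau|\le1}|\widehat g|\bigr)$, so its product with $\chi$ is Schwartz in $t$ with $H^b_t$-norm $\lesssim\int_{|\tau|\le1}|\widehat g|\lesssim\|g\|_{H^{b-1}}$. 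On $|\tau|>1$ write $\frac{e^{it\tau}-1}{i\tau}=\frac{e^{it\tau}}{i\tau}-\frac1{i\tau}$: the constant $\int_{|\tau|>1}\frac{\widehat g}{i\tau}\,d\tau$ is $\lesssim\|g\|_{H^{b-1}}$ by Cauchy--Schwarz since $\int_{|\tau|>1}\tau^{-2}\langle\tau\rangle^{2-2b}\,d\tau<\infty$, and its product with $\chi$ has $H^b_t$-norm $\lesssim\|g\|_{H^{b-1}}$; the remaining piece $h_1$ has $\widehat{h_1}(\tau)=\frac{\widehat g(\tau)}{i\tau}\mathbf 1_{|\tau|>1}$, so $\|h_1\|_{H^b}^2\sim\int_{|\tau|>1}\langle\tau\rangle^{2b-2}|\widehat g|^2\lesssim\|g\|_{H^{b-1}}^2$, and since $H^b(\mathbb R)$ is an algebra for $b>\tfrac12$ we get $\|\chi h_1\|_{H^b}\lesssim\|\chi\|_{H^b}\|h_1\|_{H^b}\lesssim\|g\|_{H^{b-1}}$. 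Adding the three contributions yields the claim, and summing in $k$ completes the proof.

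The only genuinely delicate point I anticipate is the high-frequency piece $h_1$: the naive bound $\widehat{h_1}\sim\widehat g/\tau$ gains nothing in a negative-index space, so the argument really must exploit that the factor $1/\tau$ upgrades $H^{b-1}$-regularity to $H^{b}$-regularity together with the boundedness of multiplication by the Schwartz cutoff $\chi$ on $H^b$ for $b>\tfrac12$ (equivalently, on $X^{s,b}$-type spaces the time-localizer is what produces the admissible modulation decay). Everything else is routine bookkeeping, and in the final write-up this scalar lemma can simply be invoked from \cite{erdougan2016dispersive}.
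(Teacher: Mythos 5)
The paper does not prove this lemma at all; it simply cites the book \cite{erdougan2016dispersive}, so there is no in-paper argument for your proposal to track. Your proof is correct as far as it goes and is essentially the standard one for these Bourgain-space facts: you reduce both estimates to scalar $H^b_t$ statements via the identity $\|u\|_{X^{s,b}}=\|W_{-t}u\|_{H^s_xH^b_t}$, handle the free-evolution bound by noting that $W_{-t}\bigl(\chi(t)W_tf\bigr)=\chi(t)f$ factors on the Fourier side, and handle the Duhamel bound by the usual low/high split of $\tau\mapsto\frac{e^{it\tau}-1}{i\tau}$ together with the algebra property of $H^b(\mathbb{R})$ for $b>\tfrac12$. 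All three contributions are estimated correctly, and you rightly observe that $b>\tfrac12$ is used both in the convergence of $\int_{|\tau|>1}\tau^{-2}\langle\tau\rangle^{2-2b}\,d\tau$ and in the algebra step.

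The one thing worth emphasizing more forcefully than your parenthetical remark does: the lemma as printed in the paper, with $\|F\|_{X^{s,-b}}$ on the right for $b>\tfrac12$, is not merely ``a different form'' --- it is false. Testing the scalar reduction with $\widehat g$ concentrated near a large frequency $\tau_0$ gives $\bigl\|\chi(t)\int_0^tg\bigr\|_{H^b_t}\sim\tau_0^{\,b-1}$ while $\|g\|_{H^{-b}_t}\sim\tau_0^{-b}$, and $\tau_0^{\,b-1}\lesssim\tau_0^{-b}$ forces $b\le\tfrac12$. So the paper's statement must be read as a typo for $\|F\|_{X^{s,b-1}}$ (which is also the form used in all later lemmas, where nonlinearities are bounded in $X^{s,-1/2+}$ with $b=\tfrac12+$). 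Your proof targets precisely this corrected statement, which is the right thing to do; just state explicitly that you are proving the $X^{s,b-1}$ version because the $X^{s,-b}$ version cannot hold for $b>\tfrac12$, rather than leaving it as a side remark.
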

Additionally, the standard Strichartz estimate for the Schr\"odinger equation
\[
\|W_tu_0\|_{L^q_x(\mathbb{T}^2)}\lesssim \|u_0\|_{H^s_x(\mathbb{T})}\qquad\qquad s > \frac{1}{2}-\frac{3}{q}\qquad\qquad q>6
\]
implies the $X^{s,b}$ variant
\[
\|\chi(t/T)u\|_{L^q_x(\mathbb{T}\times\mathbb{R})}\lesssim \|u\|_{X^{s,b}_T},
\]
for $b > 1/2$ and $s$ in the same range as for the Strichartz estimate. 

Let $N$ be dyadic, $P_N$ denote the standard Littlewood-Paley Fourier multiplier onto the frequencies $k\sim N$, and $u_i = P_{N_i}u_i$ for dyadic $N_i$. By a Cauchy-Schwarz and a standard multilinear interpolation argument (e.g. see (3.7) of \cite{wang2013periodic}, and then multilinear interpolation as in \cite{erdougan2016dispersive}), we have the following multilinear Strichartz estimate: given $\varepsilon > 0$ there is $b' < 1/2$ so that 
\begin{equation}\label{Multilinear Strichartz}
\left\|\prod_{i=1}^{\frac{p+1}{2}}\chi(t/T)u_i\right\|_{L^2_{x,t}}\lesssim \|u_1\|_{X^{0,b'}_T}\prod_{i=2}^\frac{p+1}{2}N_i^{\frac{p-5}{2(p-1)}+\varepsilon}\|u_i\|_{X^{0,b'}_T},
\end{equation}
for odd $p\geq 5$. While this is strictly worse than the estimate presented in \cite{wang2013periodic}, the losses only lose the endpoints, which we do not care about.

\subsection{Resonant Decomposition}\label{Section: Resonant Decomp}
We begin by giving a resonant set decomposition in the spirit of \cite{oh2020smoothing} for \eqref{p-NLS}. Define $\Phi$ to be
\begin{align}\tag{$\Phi$}
    \Phi(k,k_1, \cdots, k_p) &= k^2-k_1^2+k_2^2-\cdots-k_p^2,\\
    k&=k_1-k_2+\cdots+k_p.\nonumber
\end{align}

The quantity $\Phi$ frequently appears when performing multilinear estimates using Cauchy Schwarz. Indeed, if $T$ is a multilinear operator given by

\[
\widehat{T(u^1, \cdots, u^p)}(k) = \sum_{k=k_1-k_2+\cdots +k_p}m(k_1, \cdots, k_n) \prod_{\substack{\ell=1\\odd}}^p\widehat{u^\ell}_{k_\ell}\prod_{\substack{\ell=2\\even}}^p\overline{\widehat{u^\ell}_{k_\ell}}
\]
and 
\begin{align*}
    \Gamma^{k,\tau}_p &:=\{\tau = \tau_1-\tau_2+\cdots+\tau_p,\,\,k=k_1-k_2+\cdots k_p\},
\end{align*}
then by suppressing $\tau$ dependence, invoking duality for $v\in X^{-s,b'}$, and defining
\begin{align*}
    \overline{\widehat{w}}_k := \langle k \rangle ^{-s}\langle \tau-k^2\rangle ^{b'}\overline{\widehat{v}}_k, \qquad\qquad \widehat{z^\ell}_{k_\ell} := \langle k_\ell\rangle^{s}\langle \tau_\ell +(-1)^{\ell+1} k_\ell^2\rangle^{b}\widehat{u^\ell}_{k_\ell},
\end{align*}
we have
\begin{align*}
    &\|T(u^1, \cdots, u^p)\|_{X^{s,-b'}} = \sup_{\|w\|_{L^2_{x,t}}=1} \bigg|\underset{\Gamma_p^{k,\tau}}{\int\sum}\frac{m(k_1, \cdots, k_p)\langle k\rangle^s\langle \tau - k^2\rangle^{-b'}}{\prod_{\substack{\ell=1\\odd}}^p\langle k_\ell\rangle^s\langle \tau + k_\ell^2\rangle^b\prod_{\substack{\ell = 2\\even}}^p\langle k_\ell\rangle^s\langle \tau - k_\ell^2\rangle^b}\overline{\widehat{w}_k}\\
    &\qquad\qquad\times\prod_{\substack{\ell=1\\odd}}^p\widehat{z^\ell}_{k_\ell}\prod_{\substack{\ell=2\\even}}^p\overline{\widehat{z^\ell}_{k_\ell}}\,d\Gamma\bigg|\\
    &\leq \sup_{\|w\|_{L^2_{x,t}}=1}\underset{\tau,k}{\int\sum}\left(\underset{\Gamma_p^{k,\tau}}{\int\sum}\frac{m^2(k_1, \cdots, k_p)\langle k\rangle^{2s}\langle \tau - k^2\rangle^{-2b'}}{\prod_{\substack{\ell=1\\odd}}^p\langle k_\ell\rangle^{2s}\langle \tau + k_\ell^2\rangle^{2b}\prod_{\substack{\ell = 2\\even}}^p\langle k_\ell\rangle^{2s}\langle \tau - k_\ell^2\rangle^{2b}}\,d\tau_1\,\cdots d\tau_{p-1}\right)^{1/2}\\
    &\qquad\qquad \times\left(\underset{\Gamma_p^{k,\tau}}{\int\sum}\left|\prod_{\substack{\ell=1\\odd}}^p\widehat{v^\ell}_{k_\ell}\prod_{\substack{\ell=2\\even}}^p\overline{\widehat{v^\ell}_{k_\ell}}\right|^2\,d\tau_1\,\cdots\,d\tau_{p-1}\right)^{1/2}|\overline{\widehat{w}_k}|\,d\tau\\
    &\leq \sup_{\tau, k}\left(\underset{\Gamma_p^{k,\tau}}{\int\sum}\frac{m^2(k_1, \cdots, k_p)\langle k\rangle^{2s}\langle \tau - k^2\rangle^{-2b'}}{\prod_{\substack{\ell=1\\odd}}^p\langle k_\ell\rangle^{2s}\langle \tau + k_\ell^2\rangle^{2b}\prod_{\substack{\ell = 2\\even}}^p\langle k_\ell\rangle^{2s}\langle \tau - k_\ell^2\rangle^{2b}}\,d\tau_1\,\cdots d\tau_{p-1}\right)^{1/2}\\
    &\qquad\qquad\times\prod_{\ell=1}^p\|u^\ell\|_{X^{s,b}}.
\end{align*}
An application of (\cite{erdogan2017smoothing}, Lemma 2.1) relates the supremum term to $\Phi$ for $0 < b' < 1/2$, $b > 1/2$:
\begin{align}
\sup_{\tau, k}&\left(\underset{\Gamma_p^{k,\tau}}{\int\sum}\frac{m^2(k_1, \cdots, k_p)\langle k\rangle^{2s}\langle \tau - k^2\rangle^{-2b'}}{\prod_{\substack{\ell=1\\odd}}^p\langle k_\ell\rangle^{2s}\langle \tau + k_\ell^2\rangle^{2b}\prod_{\substack{\ell = 2\\even}}^p\langle k_\ell\rangle^{2s}\langle \tau - k_\ell^2\rangle^{2b}}\,d\tau_1\,\cdots d\tau_{p-1}\right)^{1/2}\nonumber\\
&\lesssim \sup_{\tau, k}\left(\underset{k = k_1-k_2+\cdots +k_p}{\sum}\frac{m^2(k_1, \cdots, k_p)\langle k\rangle^{2s}}{\langle \Phi\rangle^{2b'}\prod_{\substack{\ell=1}}^p\langle k_\ell\rangle^{2s}}\right)^{1/2}.\label{CS Application Bound}
\end{align}
Because of the algebraic relationship between the $k_\ell$'s, we will have at least one $k_\ell$ to cancel the $\langle k\rangle^{2s}$ factor, and hence we can tease additional regularity if we have either good lower bounds on $\Phi$, or knowledge about additional large frequencies. Note that below the Sobolev embedding level we will often have to use knowledge about $\Phi$ to simply conclude that the above sum converges, and that technicality will be a factor in the lower bound for the smoothing level\footnote{As well as the fact that we use a differentiation-by-parts scheme that increases the size of the non-linearity. Since we use the derivative gained for smoothing, we lose the ability to use the derivative gain to lower the regularity in the multilinear estimates.}.

The next lemma is exactly the tool we will use to establish either large factors, or a good lower bound for $\Phi$. The proof is elementary and is similar to Lemma 3.5 of \cite{isom2020growth} and the decomposition done in \cite{oh2020smoothing}. For the purposes of the next lemma and the remainder of the paper, let the magnitude of the $\ell$'th largest frequency be denoted by $k^*_\ell$-- that is, $k_\ell^*$ denotes the decreasing rearrangement of $|k_1|, \cdots, |k_p|.$ Furthermore, if $k_1,\cdots, k_p\in\mathbb{Z}$, then we call a configuration $(k_1, \cdots, k_p)$ \textit{resonant} when there is an odd $\ell$ with 
\begin{equation*}
k_{\ell} = k_1 - k_2 + \cdots + k_p.
\end{equation*}
\begin{lemma}\label{General NLS Decomp}
Let $p\geq 5$ be odd and fix $k, k_1, \cdots, k_p$ so that 
\[
k = k_1-k_2+\cdots+k_p.
\] 
Then at least one of the following must be true
\begin{itemize}
    \item[A)] There is exactly one odd $\ell$ such that $k_\ell = k$;
    \item[B)] $|\Phi|\gtrsim k_1^*$.
    \item[C)] $(k_3^*)^2\gtrsim k^*_1$ or $k_1^*\sim k_2^*$.
\end{itemize}
\end{lemma}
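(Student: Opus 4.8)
The plan is to argue by contrapositive: assume none of A), B), C) holds and derive a contradiction (in fact, a forced improvement that collapses into one of the listed alternatives). So suppose $|\Phi| \ll k_1^*$, that $(k_3^*)^2 \ll k_1^*$ and $k_1^* \not\sim k_2^*$ (hence $k_2^* \ll k_1^*$ as well), and no odd index satisfies $k_\ell = k$. The first thing I would do is record the elementary identity that makes $\Phi$ tractable on $\Gamma_p$: since $k = k_1 - k_2 + \cdots + k_p$ and $\Phi = k^2 - k_1^2 + k_2^2 - \cdots - k_p^2$, one can pair off $k$ with each $k_\ell$ using the factorization $a^2 - b^2 = (a-b)(a+b)$. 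Concretely, writing $k - k_1 = -(k_2 - k_3 + \cdots - k_p + \cdots)$ type relations, $\Phi$ decomposes as a sum of products of differences of frequencies. The cleanest version: grouping $k$ with $k_1$ and the remaining even/odd frequencies in pairs, one gets $\Phi = \sum_{\text{odd }\ell \geq 3}\, 2(k_1 - k_\ell)(\text{something}) + \cdots$ — but rather than pin down the exact combinatorial form here, the key structural fact I would isolate is: \emph{$\Phi$, restricted to $\Gamma_p$, is a quadratic form that factors through the pairwise differences $k_i - k_j$}, so that if the top two frequencies $k_1^*, k_2^*$ are comparable and dominate, $|\Phi|$ is typically $\sim (k_1^*)^2$ unless there is near-cancellation among the corresponding difference factors.

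Next I would run the case analysis on which of the $p$ slots realize the largest frequency $k_1^*$. Since $k_2^* \ll k_1^*$, the top frequency $k_1^*$ is attained by a \emph{unique} index, say slot $j$ with $|k_j| = k_1^*$. From $k = k_1 - k_2 + \cdots + k_p$ and the fact that all other $|k_i| \leq k_2^* \ll k_1^*$, we get $|k| = |k_j \pm (\text{sum of small terms})| \sim k_1^*$, so $k$ itself is a large frequency of size $\sim k_1^*$, and in fact $k$ and $k_j$ have the same sign and $|k - \sigma_j k_j|$ (with $\sigma_j$ the sign $\pm$ attached to slot $j$) is $O(k_2^*)$, genuinely small compared to $k_1^*$. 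Now split on the parity of $j$:

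If $j$ is even, then in $\Phi = k^2 - \cdots + k_j^2 - \cdots$ the terms $k^2$ and $+k_j^2$ both contribute positively and have size $\sim (k_1^*)^2$, while every other term is $O((k_2^*)^2) = o((k_1^*)^2)$; hence $|\Phi| \gtrsim (k_1^*)^2 \gg k_1^*$ (using $k_1^* \gtrsim 1$, or absorbing the trivial small cases), contradicting $|\Phi| \ll k_1^*$. That leaves $j$ odd. Here $k^2$ enters with a $+$ and $k_j^2$ with a $-$, and their difference is $k^2 - k_j^2 = (k - \sigma_j k_j)(k + \sigma_j k_j)$ — wait, more carefully $k^2 - k_j^2 = (|k| - |k_j|)(|k| + |k_j|)$ up to signs, and since $|k| - |k_j| = O(k_2^*)$ while $|k| + |k_j| \sim k_1^*$, this difference is $O(k_2^* \cdot k_1^*)$. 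I would then need to control $|k| - |k_j|$ more precisely. The slick route: let $m := k - \sigma_j k_j$; then $m$ equals the (signed) sum of the remaining $p-1$ frequencies, all bounded by $k_2^*$, so $|m| \leq (p-1) k_2^*$. One computes $\Phi = (k^2 - k_j^2) + \sum_{i \neq j}(-1)^{?}k_i^2 = m(2\sigma_j k_j + m) + (\text{small } O((k_2^*)^2))$. If $m \neq 0$ then $|\Phi| \gtrsim |k_j| \gtrsim k_1^*$ (since $|2\sigma_j k_j + m| \sim k_1^*$ and the error terms are lower order), again contradicting B) failing. Therefore $m = 0$, i.e. $k = \sigma_j k_j$. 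If $\sigma_j = +$ this says $k = k_j$ with $j$ odd — that is precisely alternative A). If $\sigma_j = -1$, i.e. $j$ is odd but enters $k = k_1 - k_2 + \cdots$ with a minus sign, one would re-examine: but in the standard convention here odd slots carry $+$, so this subcase does not occur, and we land in A).

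The step I expect to be the main obstacle is the bookkeeping in the last paragraph — tracking signs $\sigma_\ell$ through the definition of $\Gamma_p$ versus the definition of $\Phi$, and making sure the "small" error terms are genuinely lower order when several frequencies are of size $\sim k_2^*$ with $k_2^* \ll k_1^*$ but $k_2^*$ not necessarily $\ll (k_1^*)^{1/2}$. The clean dichotomy "$m=0$ or $|\Phi| \gtrsim k_1^*$" needs $|2\sigma_j k_j + m|$ to stay $\sim k_1^*$, which holds since $|m| \leq (p-1)k_2^* \leq \tfrac12 k_1^*$ once $k_2^* \ll k_1^*$ with a constant depending on $p$; and the residual $\sum_{i\neq j}\pm k_i^2$ is $O(p (k_2^*)^2)$, which is $\ll k_1^*$ exactly when $(k_3^*)^2 \ll k_1^*$ fails to help directly — here one uses instead that at most one or two of the $k_i$ ($i \neq j$) can be of size $k_2^*$, and if \emph{two} of them are comparable to each other at size $k_2^*$ one invokes hypothesis C)'s second clause applied to the sub-configuration, or simply notes $k_2^* \leq k_1^*$ makes $(k_2^*)^2 \leq k_1^* k_2^*$, and the $m(2\sigma_j k_j + m)$ term of size $\gtrsim k_1^*$ (when $m \neq 0$, so $|m| \geq 1$) dominates it. I would present this last absorption carefully, as it is where the precise form of C) earns its keep. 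The remaining cases (C) true trivially closes the proof; and if $k_1^* \sim k_2^*$, C) holds and we are done) require no work.
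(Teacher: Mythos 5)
Your overall strategy --- under $\neg$C isolate the unique largest slot $j$, split on its parity, use positivity of $k^2+k_j^2$ when $j$ is even and the factorization $k^2-k_j^2=m(2k_j+m)$ with $m=k-k_j$ when $j$ is odd --- is essentially the paper's argument, but as written it has two genuine gaps. First, you negate A as ``no odd index satisfies $k_\ell=k$,'' whereas A asserts there is \emph{exactly one} such index; its negation therefore also contains configurations with two or more odd resonances, which you never treat. These cannot be skipped: the paper disposes of them at the outset by observing that two odd slots equal to $k$ force, via $k=k_1-k_2+\cdots+k_p$, a third frequency of size $\gtrsim|k|$, whence either $(k_3^*)^2\gtrsim k_1^*$ (when $|k|\sim k_1^*$) or $k_1^*\sim k_2^*$ (when $|k|\ll k_1^*$), i.e.\ Case C. Some such step is needed for your contrapositive to cover all of $\neg$A.

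Second, the absorption of the error $E=\sum_{i\neq j}\pm k_i^2$ in the odd-$j$, $m\neq 0$ branch is not justified in the regime $(k_2^*)^2\gtrsim k_1^*$, which $\neg$C permits (it only gives $(k_3^*)^2\ll k_1^*$ and $k_2^*\ll k_1^*$). Your fallback ``$(k_2^*)^2\le k_1^*k_2^*$, and the term $m(2k_j+m)$ of size $\gtrsim k_1^*$ dominates it'' is a non sequitur: a lower bound of $k_1^*$ on the main term does not beat a possible error of size $(k_2^*)^2\gtrsim k_1^*$, and the appeal to ``C's second clause applied to the sub-configuration'' is not a hypothesis you are entitled to. The missing observation --- exactly the paper's second subcase, where it computes $|\Phi|\geq 2|k_2^2-k_1k_2|-p(k_3^*)^2-pk_1^*k_3^*\gtrsim (k_2^*)^2$ --- is that $(k_2^*)^2\gtrsim k_1^*$ together with $(k_3^*)^2\ll k_1^*$ forces $k_3^*\ll k_2^*$; since $m$ is the signed sum of the $k_i$, $i\neq j$, this gives $|m|\gtrsim k_2^*$, so the main term is $\gtrsim k_1^*k_2^*$ and dominates $(k_2^*)^2+(p-2)(k_3^*)^2$. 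In the complementary case $k_3^*\sim k_2^*$ one has $(k_2^*)^2\ll k_1^*$ directly from $\neg$C and $|m|\geq 1$ suffices. With these two repairs your argument coincides with the paper's proof.
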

\begin{proof}
Organize the frequencies so that $k_1\geq k_3\cdots\geq k_p$ and similarly for the evens. We first note that if we are not in Case $A$ and there is resonance, then by the algebraic relationship among $k, k_1,\cdots, k_p$ we must have $k_3^*\gtrsim k$. In the situation that $k_1^*\sim k$, we find that $k_3^*\gtrsim k_1^*$ and in the situation $k_1^*\gg k$, we find that $k_1^*\sim k_2^*$. In either situation we must live in Case $C$.

Suppose now that we're not in Cases $A$ or $C$, so that we must either have $(k_3^*)^2, (k_2^*)^2\ll k_1^*,$ or $(k_3^*)^2\ll k_1^*\lesssim (k_2^*)^2$. Then we first assume $(k_2^*)^2\ll k_1^*$, and $k_1^* = k_1$, so that:
\[
|\Phi| \geq |k^2-k_1^2|-p(k_2^*)^2= |k-k_1||k+k_1|-p|k_2^*|^2\gtrsim |k_1|.
\]
Notice that the above holds because $|k-k_1|=0$ only if there is a If $k_1^* = k_2$ then the proof follows similarly from 

\begin{equation}\label{Equation: Phi big when even big}
|\Phi| \geq k^2+k_2^2-p|k_2^*|^2\gtrsim k_2^2.
\end{equation}
Now, if $(k_3^*)^2\ll k_1^*\lesssim (k_2^*)^2$, $k_1^* = k_1$, and $k_2^*=k_2$, then
\begin{align*}
|\Phi|&\geq |(k_1-k_2+\cdots+k_p)^2-k_1^2+k_2^2|-p(k_3^*)^2\\
&\geq 2|k_2^2-k_1k_2|-p(k_3^*)^2-pk_1^*k_3^*\\
&\gtrsim (k_2^*)^2\gtrsim k_1^*,
\end{align*}
with a similar result for the other choices of $k_1^*$ and $k_2^*$. Note that we have used that $k_1^*\gg k_2^*$ in two locations: to conclude that $k-k_1\ne 0$ if there isn't a single resonance, and to conclude that $|k_1-k_2|\gtrsim k_1$ in the penultimate step.
\end{proof}

The remainder of the paper will utilize the above lemma by first singling out Case A, and then singling out Case B from what remains. That is, our decomposition will, for the non-resonant multi-linear operators, be a splitting of the frequency space into $A\sqcup (B\setminus A)\sqcup (C\setminus (B\cup A)),$ where $A, B, C$ represent subsets of $(\mathbb{Z}\setminus\{0\})^p$ satisfying the analogous cases above.

\begin{remark}
Case $A$ is phrased differently than the definition of resonance. This is because we are only able to remove Case $A$ from the resonant situation.
\end{remark}
\subsection{Normal Form Reduction}\label{Section: Normal Form Reduction}
In this section we use Lemma \ref{General NLS Decomp} to prove smoothing for p-NLS. We first start by removing a portion of the resonant set (Case A) so that we may relegate what remains to to Case $C$. Define \textit{Resonance}, denoted $\mathcal{R}$, to contain the resonant tuples, and \textit{Non-resonant}, denoted $\mathcal{NR}$, to contain the remaining terms. We decompose $\mathcal{R}$ into $\mathcal{R}^1$ and $\mathcal{R}^2$, where
\begin{equation}\label{Definition: R1}
\widehat{\mathcal{R}^1}[u, \cdots, u]_k := \sum_{\substack{\ell=1\\odd}}^{p} \sum_{\substack{k=k_1-k_2+\cdots-k_{p-1} +k_p\\k_\ell = k}}\prod_{\substack{i=1\\odd}}^{p}\widehat{u}_{k_i}\prod_{\substack{i=2\\even}}^p\overline{\widehat{u}_{k_i}} = \frac{p+1}{4\pi}\widehat{u}_k\int_\mathbb{T}|u|^{p-1}\,dx.
\end{equation}
The principle of inclusion-exclusion then gives a nice interpretation for $\mathcal{R}^2$ as containing the Resonant set with at least two odd-indexed internal frequencies that are equal to the external frequency. By the algebraic relation this forces there to be at least $3$ internal frequencies $\gtrsim |k|$.

Define the Fourier multiplier:
\begin{equation}\label{L definition}
L_t[u] = \exp\left(\mp i\frac{p+1}{4\pi}\int_0^t\int_\mathbb{T} |u|^{p-1}(x,s)\,dxds\right),
\end{equation}
whose effect will be removing the $\mathcal{R}^1$ portion of $\mathcal{R}$. 

Let $v = \mathcal{F}^{-1}\left(L_t[u]\widehat{u}\right)$, and notice that for odd $p$ we have
\begin{equation}\label{Multiplier Fact: Preserve Mean}
\int_\mathbb{T}|u|^{p-1}(x,t)\,dx = 2\pi\sum_{0=k_1-k_2+\cdots -k_{p-1}}\widehat{u}_{k_1}\overline{\widehat{u}}_{k_2}\cdots\widehat{u}_{k_{p-2}}\overline{\widehat{u}}_{k_{p-1}} = \int_\mathbb{T}|v|^{p-1}(x,t)\,dx,
\end{equation}
and
\begin{equation}\label{Multiplier fact: Nonlinearity}
\widehat{\left(|u|^{p-1}u\right)}_k = 2\pi\sum_{k=k_1-k_2+\cdots +k_{p}}\widehat{u}_{k_1}\overline{\widehat{u}}_{k_2}\cdots\widehat{u}_{k_{p-2}}\overline{\widehat{u}}_{k_{p-1}}\widehat{u}_{k_p} = \frac{1}{L_t[u]}\widehat{\left(|v|^{p-1}v\right)}_k.
\end{equation}
Now, applying \eqref{Multiplier Fact: Preserve Mean}, \eqref{Definition: R1}, and the fact that $L_t$ is independent of $x$, we find
\[
i\widehat{u}_t = \left\{\mp\frac{p+1}{4\pi}\int_\mathbb{T}|u|^{p-1}(x,t)\,dx\right\}\frac{\widehat{v}}{L_t[u]} + i\frac{\widehat{v}_t}{L_t[u]} = \mp\widehat{\mathcal{R}^1}[v, \cdots, v]\frac{1}{L_t[u]} + i\frac{\widehat{v}_t}{L_t[u]},
\]
so that our new equation becomes
\begin{align}\label{v equation}
    \begin{cases}
        iv_t +\bigtriangleup v \pm \mathcal{R}^2[v] \pm \mathcal{NR}[v] = 0\\
        v(x,0) = v_0 = u_0\in H^s_x(\mathbb{T}).
    \end{cases}
\end{align}
This equation has similar well-posedness properties as \eqref{p-NLS}, but since the multiplier is time dependent we don't know a priori if there is a relationship in $X^{s,b}$ between $u$ and $v$. Because of this and the later use of the local well-posedness bound, we include the following lemma, whose proof is standard.
\begin{lemma}\label{Lemma: v local well posedness}
Let $p\geq 5$ be odd, $s > \frac{p-5}{2(p-1)}$, and $u_0\in H^s_x(\mathbb{T})$. Then there is a $T = T(\|u_0\|_{H^s_x(\mathbb{T})})$ and a unique solution $v\in X^{s,1/2+}_T$ to \eqref{v equation} emanating from $u_0$ that satisfies
\[
\|v\|_{X^{s,1/2+}_T}\lesssim \|u_0\|_{H^s_x(\mathbb{T})}.
\]
\end{lemma}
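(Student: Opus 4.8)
The plan is to prove Lemma \ref{Lemma: v local well posedness} by a standard contraction-mapping argument in the restricted Bourgain space $X^{s,1/2+}_T$, exactly as one does for \eqref{p-NLS} itself, the only new feature being the time-dependent multiplier $L_t[u]$ hidden inside $\mathcal{R}^2$ and $\mathcal{NR}$. First I would recall that $v$ solves \eqref{v equation} if and only if it solves the Duhamel formulation
\[
v(t) = \chi(t)e^{it\partial_x^2}u_0 \mp \chi(t/T)\int_0^t e^{i(t-s)\partial_x^2}\bigl(\mathcal{R}^2[v](s) + \mathcal{NR}[v](s)\bigr)\,ds
\]
on $[0,T]$, with $\chi$ a standard smooth cutoff; I would then set up the map $v\mapsto\Gamma v$ given by the right-hand side and show it is a contraction on a ball of radius $\sim\|u_0\|_{H^s_x}$ in $X^{s,1/2+}_T$. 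By the linear and Duhamel estimates of the first recorded lemma, it suffices to establish the multilinear bound
\[
\|\mathcal{R}^2[v] + \mathcal{NR}[v]\|_{X^{s,-1/2+}_T}\lesssim T^{\theta}\|v\|_{X^{s,1/2+}_T}^p
\]
for some $\theta>0$, together with the analogous difference estimate for $\mathcal{R}^2[v]-\mathcal{R}^2[w]$ etc.; the $T^\theta$ smalling factor comes from the usual $X^{s,b}$ time-localization gain available when $b'<1/2<b$.

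The key point is that the right-hand side of \eqref{v equation} is, as a function of $v$, a genuine $p$-linear expression: $\mathcal{R}^2[v]$ and $\mathcal{NR}[v]$ together recover all of $|v|^{p-1}v$ except the $\mathcal{R}^1$ piece, so by \eqref{Multiplier fact: Nonlinearity} and \eqref{Definition: R1} we have pointwise in Fourier $\widehat{\mathcal{R}^2[v]}_k + \widehat{\mathcal{NR}[v]}_k = \widehat{(|v|^{p-1}v)}_k - \widehat{\mathcal{R}^1[v,\dots,v]}_k$. Hence the required multilinear estimate reduces to (i) the standard estimate $\||v|^{p-1}v\|_{X^{s,-1/2+}_T}\lesssim T^\theta\|v\|_{X^{s,1/2+}_T}^p$, which follows from the multilinear Strichartz estimate \eqref{Multilinear Strichartz} and the hypothesis $s > s_c = \frac{p-5}{2(p-1)}$ by the duality/Cauchy–Schwarz scheme spelled out in Section \ref{Section: Resonant Decomp} (placing the $L^2_{x,t}$ norm on one factor times the $L^{p-1}$-type Strichartz norms on the others, noting $p-1\geq 4 > $ is not directly $>6$ so one instead pairs factors and uses Hölder plus \eqref{Multilinear Strichartz} / the $L^q$ Strichartz with $q$ large), and (ii) the estimate for the resonant piece $\mathcal{R}^1[v,\dots,v] = \frac{p+1}{4\pi}\widehat{v}_k\int_\mathbb{T}|v|^{p-1}\,dx$, which is even easier since $\int_\mathbb{T}|v|^{p-1}\,dx$ is controlled by $\|v\|_{L^{p-1}_x}^{p-1}\lesssim\|v\|_{H^s_x}^{p-1}$ (Sobolev, valid for $s\geq s_c$) uniformly in $t$, reducing it to a product of an $H^s_x$-function with a bounded scalar function of time.

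Once the $p$-linear estimate and its multilinear difference version are in hand, the contraction argument is routine: choose $T$ small depending only on $\|u_0\|_{H^s_x}$ so that $\Gamma$ maps the ball into itself and is a strict contraction, obtain the unique fixed point $v\in X^{s,1/2+}_T$, and read off $\|v\|_{X^{s,1/2+}_T}\lesssim\|u_0\|_{H^s_x}$ from the fixed-point identity and the linear estimate. Uniqueness in the full $X^{s,1/2+}_T$ (not just in the ball) follows by the standard continuity-in-$T$ argument. I expect the only mildly delicate point to be organizing the Hölder/Strichartz bookkeeping for the $p$-linear term so that exactly one factor carries the $X^{0,b'}_T$ norm in \eqref{Multilinear Strichartz} while the remaining $\frac{p-1}{2}$ factors absorb their $N_i^{s_c+\varepsilon}$ losses against the available $\langle k_i\rangle^s$ weights — this is precisely where $s > s_c$ is used — but since the full resonant decomposition of Section \ref{Section: Resonant Decomp} is not needed here (we are not asking for smoothing, only well-posedness at regularity $s > s_c$), no new ideas beyond the cited estimates are required, which is why the lemma is asserted to have a standard proof.
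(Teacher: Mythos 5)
Your overall strategy is the intended one: the paper offers no proof beyond the word ``standard,'' and a contraction argument in $X^{s,1/2+}_T$ built on the Duhamel formulation, the linear/Duhamel estimates, and the multilinear Strichartz estimate \eqref{Multilinear Strichartz} (pairing the dual function with $\frac{p-1}{2}$ factors and the remaining $\frac{p+1}{2}$ factors into a second group, then Cauchy--Schwarz in $L^2_{x,t}$) is exactly how the paper handles the analogous multilinear bounds elsewhere (e.g.\ Case C of Lemma \ref{Lemma: HH & R2 regularity} and Lemma \ref{Lemma: T composite in XSB Regularity}). Writing $\mathcal{R}^2[v]+\mathcal{NR}[v]$ as the full $p$-linear term minus $\mathcal{R}^1[v]$ is also a sensible reduction.

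There is, however, one step that fails in part of the claimed range: your treatment of the resonant piece $\mathcal{R}^1[v]=\frac{p+1}{4\pi}\,v\int_{\mathbb{T}}|v|^{p-1}\,dx$ via the pointwise-in-time bound $\|v\|_{L^{p-1}_x}\lesssim\|v\|_{H^s_x}$. The Sobolev embedding $H^s_x(\mathbb{T})\hookrightarrow L^{p-1}_x(\mathbb{T})$ requires $s\geq \frac12-\frac{1}{p-1}=\frac{p-3}{2(p-1)}$, which is strictly larger than $s_c=\frac{p-5}{2(p-1)}$; so for $s\in\bigl(\frac{p-5}{2(p-1)},\frac{p-3}{2(p-1)}\bigr)$ — precisely part of the range asserted in the lemma — your estimate (ii) is not available, and the resonant term is \emph{not} ``even easier'' than the rest at these regularities. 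The fix is to estimate the zero mode in a space--time sense rather than pointwise in time: for instance,
\[
\|\mathcal{R}^1[v]\|_{X^{s,-1/2+}_T}\lesssim T^{\theta}\,\|v\|_{L^\infty_t H^s_x}\,\bigl\|\,\|v(t)\|_{L^{p-1}_x}^{p-1}\bigr\|_{L^2_t}
= T^{\theta}\,\|v\|_{L^\infty_t H^s_x}\,\|v\|_{L^{2(p-1)}_t L^{p-1}_x}^{p-1},
\]
and $\|v\|_{L^{2(p-1)}_t L^{p-1}_x}\lesssim \|v\|_{X^{s,1/2+}_T}$ for every $s>s_c$ follows by interpolating the periodic Strichartz bound $\|v\|_{L^{q}_{x,t}}\lesssim\|v\|_{X^{1/2-3/q+,1/2+}_T}$ with $q=2(p-2)$ against $\|v\|_{L^\infty_t L^2_x}\lesssim\|v\|_{X^{0,1/2+}_T}$ (the interpolation exponent $\theta=\frac{p-2}{p-1}$ gives exactly the regularity $\frac{p-2}{p-1}\bigl(\frac12-\frac{3}{2(p-2)}\bigr)=s_c$, with the case $p=5$ handled by the $L^6_{x,t}$ estimate). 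With this replacement (and the corresponding difference estimates for the contraction), your argument closes on the full range $s>\frac{p-5}{2(p-1)}$; as written, it only proves the lemma for $s\geq\frac{p-3}{2(p-1)}$, which would suffice for Theorem \ref{Theorem 1: Smoothing} but not for the lemma as stated.
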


To remove certain high-low frequency interactions specific to the situation that $|\Phi|\gtrsim |k|$, we consider the portion of $\mathcal{NR}$ given by one large odd internal frequency, in the sense that $\ell$ odd and $|k_\ell|\gg k_2^*$. With this in mind, for odd $\ell$ we define
\begin{equation}
    \widehat{T_\ell}[v_1,\cdots, v_p]_k = \sum_{\substack{k=k_1-k_2+\cdots-k_{p-1} +k_p\\|k_\ell|\gg k_2^*\\ Case\, B}}\frac{\prod_{\substack{j=1\\odd}}^{p}\widehat{v}_{k_i}\prod_{\substack{j=2\\even}}^p\overline{\widehat{v}_{k_i}}}{\Phi(k, k_1\cdots, k_p)},
\end{equation}
and
\begin{equation}\label{defn of T}
    \widehat{T}[W_t u_0, v, \cdots, v]_k = \sum_{\ell\,\,odd}\widehat{T_\ell}[v, \cdots, v, \underset{\ell'th}{\underbrace{W_t u_0}}, v, \cdots, v]_k = \frac{p+1}{2}\widehat{T_1}[W_tu_0, v, \cdots, v]_k,
\end{equation}
where it's understood that Case $B$ is in the language of our decomposition lemma.

The use of this transformation is that, by symmetry, we have:
\begin{align}
\big(i\partial_t-&k^2\big)\widehat{T}_1[W_t u_0,v,\cdots, v]_k\label{Calculation: T in Equation}\\
&= \sum_{\substack{k=k_1-k_2+\cdots-k_{p-1} +k_p\\|k_1|\gg k_2^*\\ case\, B}}\frac{k_1^2-k^2}{\Phi(k, k_1,\cdots, k_p)}\widehat{(W_tu_0)_{k_1}}\prod_{\substack{j=3\\odd}}^{p}\widehat{v}_{k_i}\prod_{\substack{j=2\\even}}^p\overline{\widehat{v}_{k_i}}\nonumber\\
&\qquad+\sum_{\substack{k=k_1-k_2+\cdots-k_{p-1} +k_p\\|k_1|\gg k_2^*\\ case\, B}}\frac{p-1}{2\Phi(k, k_1\cdots, k_p)}\widehat{(W_tu_0)_{k_1}}(i\partial_t\widehat{v}_{k_3})\prod_{\substack{j=5\\odd}}^{p}\widehat{v}_{k_i}\prod_{\substack{j=2\\even}}^p\overline{\widehat{v}_{k_i}}\nonumber\\
&\qquad+\sum_{\substack{k=k_1-k_2+\cdots-k_{p-1} +k_p\\|k_1|\gg k_2^*\\ case\, B}}\frac{p-1}{2\Phi(k, k_1\cdots, k_p)}\widehat{(W_tu_0)_{k_1}}\overline{(-i\partial_t\widehat{v}_{k_2})}\prod_{\substack{j=3\\odd}}^{p}\widehat{v}_{k_i}\prod_{\substack{j=4\\even}}^p\overline{\widehat{v}_{k_i}}\nonumber\\
&=-\sum_{\substack{k=k_1-k_2+\cdots-k_{p-1} +k_p\\|k_1|\gg k_2^*\\ case\, B}}\widehat{(W_tu_0)_{k_1}}\prod_{\substack{j=3\\odd}}^{p}\widehat{v}_{k_i}\prod_{\substack{j=2\\even}}^p\overline{\widehat{v}_{k_i}}\nonumber\\
&\qquad +\sum_{\substack{k=k_1-k_2+\cdots-k_{p-1} +k_p\\|k_1|\gg k_2^*\\ case\, B}}\frac{p-1}{2\Phi(k, k_1\cdots, k_p)}\widehat{(W_tu_0)_{k_1}}(i\partial_t-k_{3}^2) \widehat{v}_{k_3}\prod_{\substack{j=5\\odd}}^{p}\widehat{v}_{k_i}\prod_{\substack{j=2\\even}}^p\overline{\widehat{v}_{k_i}}\nonumber\\
&\qquad-\sum_{\substack{k=k_1-k_2+\cdots-k_{p-1} +k_p\\|k_1|\gg k_2^*\\ case\, B}}\frac{p-1}{2\Phi(k, k_1\cdots, k_p)}\widehat{(W_tu_0)_{k_1}}\overline{(i\partial_t-k_2^2)\widehat{v}_{k_{2}}}\prod_{\substack{j=3\\odd}}^{p}\widehat{v}_{k_i}\prod_{\substack{j=4\\even}}^p\overline{\widehat{v}_{k_i}}\nonumber\\
&=-\frac{2}{p+1}\widehat{\mathcal{HL}_B}[W_tu_0,v, \cdots, v]_k\nonumber\\
&\qquad- \frac{p-1}{2}\widehat{T_1}[W_t u_0, (i\partial_t+\bigtriangleup)v, \cdots, v]_k\nonumber\\
&\qquad+\frac{p-1}{2}\widehat{T_1}[W_t u_0, v, (i\partial_t+\bigtriangleup)v, \cdots, v]_k,\nonumber
\end{align}
where 
\begin{align}
\widehat{\mathcal{R}^2}[v, \cdots, v]+\widehat{\mathcal{NR}}[v, \cdots, v] &= \widehat{\mathcal{E}}[v, \cdots, v]+\widehat{\mathcal{HL}_B}[v, \cdots, v],\label{Definition: E}\\
\widehat{\mathcal{HL}_B}[v, \cdots, v] &:= \frac{p+1}{2}\sum_{\substack{k=k_1-k_2+\cdots-k_{p-1} +k_p\\|k_1|\gg k_2^*\\ Case\, B}}\prod_{\substack{j=1\\odd}}^{p}\widehat{v}_{k_i}\prod_{\substack{j=2\\even}}^p\overline{\widehat{v}_{k_i}}\label{Definition: HL1}.
\end{align}
Note that $\mathcal{HL}_B$ refers to the high-low interaction in Case $B$.

In order to employ the normal form transformation, we recall that $v(x,0) = u_0$, let
\begin{equation}\label{z definition}
v = W_t u_0 \pm T[W_tu_0, v, \cdots, v] + z.
\end{equation}
We then substitute into the linear portion of the equation as well as the high frequency component of $\mathcal{HL}_B$, so that the final equation reads:
\begin{align}\label{Equation}
    \begin{cases}
        iz_t +\bigtriangleup z \pm \mathcal{E}[v, \cdots, v] \pm \mathcal{HL}_B[z\pm T[W_t u_0, v, \cdots, v], v, \cdots, v]\\ 
        \,\,\,\mp  \frac{p-1}{2}T[W_tu_0, v, \left(\mathcal{R}^2+\mathcal{NR}\right)[v, \cdots, v],v, \cdots, v]\\
\,\,\,\,\,\,\pm \frac{p-1}{2}T[ W_tu_0, \left(\mathcal{R}^2+\mathcal{NR}\right)[v,\cdots, v],v, \cdots, v]= 0\\
        z(x,0) = \mp T[u_0, \cdots, u_0]\in H^s_x(\mathbb{T}),
    \end{cases}
\end{align}
where we have omitted repeated entries.

While this normal form transformation doesn't go down to the level of local well-posedness for the p-NLS, we still can prove this for a large range: 
\begin{lemma}\label{Lemma: Normal Form Regularity}
Let $p\geq 5$ be odd, $u_0\in H^s_x$, $s > \frac{p-3}{2(p-1)}$, and $T$ defined for $p$ as above. Then
\[
\|T[u_0, \cdots, u_0]\|_{H^{s+\varepsilon}_x}\lesssim \|u_0\|_{H^s_x}^{p},
\]
for
\[
0 < \varepsilon < \min\left((p-1)s - \frac{p-3}{2}, 1\right).
\]
\end{lemma}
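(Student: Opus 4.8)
The plan is to turn the bound into a purely Fourier-side multilinear estimate and then close it with a Littlewood--Paley summation; no Bourgain-space machinery is needed, since the statement is at fixed time. Since $T[u_0,\dots,u_0]=\tfrac{p+1}{2}T_1[u_0,\dots,u_0]$, it suffices to treat $T_1$. On the frequency region defining $T_1$ we have $|k_1|\gg k_2^{*}$, so $|k_1|=k_1^{*}$, $|k|\sim|k_1|$, and, being in Case $B$, $|\Phi|\gtrsim k_1^{*}\sim\langle k_1\rangle$; hence the symbol satisfies $\langle k\rangle^{s+\varepsilon}/|\Phi|\lesssim\langle k_1\rangle^{s+\varepsilon-1}$. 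Passing to absolute values (the conjugates do not matter), it therefore suffices to prove
\[
\Bigl\|\sum_{\substack{k=k_1-k_2+\cdots+k_p\\ |k_1|\gg k_2^{*}}}\langle k_1\rangle^{s+\varepsilon-1}\prod_{j=1}^{p}|\widehat{u_0}_{k_j}|\Bigr\|_{\ell^2_k}\lesssim\|u_0\|_{H^s_x}^{p}.
\]

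Next I would localize dyadically, $|k_1|\sim N_1$ and $|k_j|\sim N_j$ for $j\ge2$, keeping only blocks with $N_2,\dots,N_p\ll N_1$ (forced by $|k_1|\gg k_2^{*}$). Writing $U_N:=\mathcal{F}^{-1}\bigl(\mathbf 1_{|\xi|\sim N}\,|\widehat{u_0}_\xi|\bigr)$ and $c_N:=\|P_Nu_0\|_{H^s_x}$, so that $\sum_N c_N^2\sim\|u_0\|_{H^s_x}^2$ and $\|U_N\|_{L^2_x}\sim N^{-s}c_N$, the $(N_1,\dots,N_p)$-block of the sum above is pointwise $\lesssim N_1^{s+\varepsilon-1}\,\widehat{\bigl(\prod_{j}U_{N_j}\bigr)}_k$, hence is controlled in $\ell^2_k$ by $N_1^{s+\varepsilon-1}\bigl\|\prod_{j}U_{N_j}\bigr\|_{L^2_x}$. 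By H\"older, with $U_{N_1}$ in $L^2_x$ and every $U_{N_j}$ ($j\ge2$) in $L^\infty_x$, together with Bernstein $\|U_N\|_{L^\infty_x}\lesssim N^{1/2}\|U_N\|_{L^2_x}$, this is
\[
\lesssim N_1^{s+\varepsilon-1}\|U_{N_1}\|_{L^2_x}\prod_{j=2}^{p}N_j^{1/2}\|U_{N_j}\|_{L^2_x}\lesssim N_1^{\varepsilon-1}c_{N_1}\prod_{j=2}^{p}N_j^{1/2-s}c_{N_j}.
\]

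Finally I would sum the dyadic parameters successively in the order $N_1,N_2,\dots,N_p$; by symmetry we may assume $N_1\gg N_2\ge N_3\ge\cdots\ge N_p$. Since $\varepsilon<1$, Cauchy--Schwarz gives $\sum_{N_1\gg N_2}N_1^{\varepsilon-1}c_{N_1}\lesssim N_2^{\varepsilon-1}\|u_0\|_{H^s_x}$, and iterating, after the first $j-1$ summations one is left with a factor $\|u_0\|_{H^s_x}^{\,j-1}\sum_{N_j\gg N_{j+1}}N_j^{e_j}c_{N_j}$ (times the remaining weights), where $e_j=\varepsilon-\tfrac12-s+(j-2)\bigl(\tfrac12-s\bigr)$; this is summable precisely when $e_j<0$. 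For $s<\tfrac12$ the $e_j$ increase in $j$, so the binding constraint is $e_p<0$, that is $\varepsilon<(p-1)s-\tfrac{p-3}{2}$; for $s\ge\tfrac12$ the only constraint is $\varepsilon<1$. Each of the $p$ summations contributes one factor of $\|u_0\|_{H^s_x}$, yielding $\|u_0\|_{H^s_x}^{p}$, and the hypothesis $s>\tfrac{p-3}{2(p-1)}$ is exactly what makes $0<\varepsilon<\min\bigl((p-1)s-\tfrac{p-3}{2},\,1\bigr)$ a nonempty range. I expect the main obstacle to be this last step: one must use the separation $|k_1|\gg k_2^{*}$ in an essential way, so that the unique high frequency stays in $L^2_x$ while all $p-1$ remaining frequencies are placed in $L^\infty_x$ via Bernstein, and then verify that the resulting geometric series close — the extremal configuration, with $k_2,\dots,k_p$ all at one common scale, being precisely what produces the threshold $(p-1)s-\tfrac{p-3}{2}$.
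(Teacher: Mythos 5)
Your argument is correct and yields exactly the stated range, but it takes a different route from the paper. After the same initial reductions that you make (on the support of $T_1$ one has $|k|\sim|k_1|$ and, by Case B, $|\Phi|\gtrsim \langle k_1\rangle$, so the weight is $\lesssim\langle k_1\rangle^{s+\varepsilon-1}$), the paper does not dyadically decompose: it applies Cauchy--Schwarz in the internal frequencies at each fixed output frequency $k$, reducing the lemma to the elementary counting bound
\[
\sup_{k}\sum_{\substack{k=k_1-k_2+\cdots+k_p\\ |k_1|\gg k_2^*,\ |\Phi|\gtrsim|k|}}\frac{\langle k\rangle^{2\varepsilon-2}}{\prod_{i=2}^{p}\langle k_i\rangle^{2s}}\lesssim\sup_k\langle k\rangle^{2\varepsilon-2+(p-1)\max(1-2s,0)},
\]
which is finite precisely on the claimed range. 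Your Littlewood--Paley/H\"older/Bernstein scheme (high frequency in $L^2_x$, the $p-1$ low frequencies in $L^\infty_x$, then iterated dyadic summation) is a physical-side substitute for that Schur-type step: it is slightly longer, but it exhibits the extremal configuration (all of $k_2,\dots,k_p$ at one common scale) explicitly, avoids the sup-over-$(k)$ reduction, and handles the borderline $s=\tfrac12$ cleanly with strict inequalities, whereas the paper's displayed conclusion nominally allows $\varepsilon=\min(\cdot)$ where there is a logarithmic loss. Two cosmetic points only: in your iteration the constraint inherited from the ordering is $N_j\ge N_{j+1}$ rather than $N_j\gg N_{j+1}$ (immaterial, since you only use $\sum_{N_j\ge N_{j+1}}N_j^{2e_j}\lesssim N_{j+1}^{2e_j}$ for $e_j<0$), and the alternating sign pattern in $k=k_1-k_2+\cdots+k_p$ should be absorbed by replacing the even-indexed $U_{N_j}$ by the functions whose Fourier coefficients are $|\widehat{u_0}_{-\xi}|\mathbf{1}_{|\xi|\sim N_j}$, which leaves all the $L^2$ and Bernstein bounds unchanged; neither affects correctness.
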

\begin{remark}
The range of smoothing for $T$ in $H^s_x$ and Lemma \ref{Lemma: HH & R2 regularity} motivates the definition of $T$ in terms of odd-indexed entries. In particular, we see that there is no value added by including the even-indexed entries, and we simply complicate the definition.
\end{remark}
\begin{proof}
Note that $\Phi\gtrsim k$ by summation restrictions. By using Cauchy-Schwarz we reduce to having to bound
\begin{align*}
\sup_{k}\sum_{\substack{k = k_1-k_2+\cdots +k_p\\|k_1|\gg k_2^*\\|\Phi|\gtrsim |k|}}\frac{\langle k\rangle^{2s+2\varepsilon}}{|\Phi|^2\prod_{i=1}^p\langle k_i\rangle^{2s}}&\lesssim \sup_{k}\sum_{\substack{k = k_1-k_2+\cdots +k_p\\|k_1|\gg k_2^*\\|\Phi|\gtrsim |k|}}\frac{\langle k\rangle^{2\varepsilon-2}}{\prod_{i=2}^p\langle k_i\rangle^{2s}}\\
&\lesssim \sup_{k}\langle k\rangle ^{2\varepsilon - 2 +(p-1)\max(1-2s, 0)},
\end{align*}
which is finite for
\[
0 < \varepsilon \leq \min\left((p-1)s - \frac{p-3}{2}, 1\right)
\]
and 
\[
s > \frac{p-3}{2(p-1)}.
\]

Note that we have used the fact that $k_2^*\ll |k_1|$ to sum the $p-1$ other variables.
\end{proof}

\subsection{Lemmas}\label{Lemmas}
In this section we will prove smoothing lemmas related to the remaining terms that appear in the Duhamel form of \eqref{Equation}. Note that we keep factors of $T$ in the estimates for clarity as well as for later use in the smoothing proof.

The first lemma concerns the nicest term in \eqref{Equation}.
\begin{lemma}\label{Lemma: HH & R2 regularity}
Let $p\geq 5$ be odd, $s > \frac{p-3}{2(p-1)}$, and $0<T\ll 1$. Then there is $\delta > 0$ so that
\[
\|\mathcal{E}[v, \cdots, v]\|_{X^{s+\varepsilon, -1/2+}_T}\lesssim T^\delta\|v\|_{X^{s, 1/2+}_T}^{p},
\]
for 
\[
0 < \varepsilon < \min\left((p-1)s - \frac{p-3}{2}, s - \frac{p-5}{2(p-1)}, 1\right).
\]
\end{lemma}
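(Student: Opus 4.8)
The plan is to estimate $\mathcal{E}$ by duality in $X^{s+\varepsilon,-1/2+}_T$, pairing against a test function $w$ with $\|w\|_{X^{-s-\varepsilon,1/2-}_T}=1$, and using the Cauchy–Schwarz reduction from the ``Background'' subsection that converts the $X^{s,b}$ norm of a multilinear operator into a supremum over $k,\tau$ of a sum involving $\langle\Phi\rangle^{-2b'}$. Recall from \eqref{Definition: E} that $\mathcal{E}=\mathcal{R}^2+\mathcal{NR}-\mathcal{HL}_B$, so I would treat the three pieces $\mathcal{R}^2$, $\mathcal{NR}\setminus\mathcal{HL}_B$ (which after removing $\mathcal{HL}_B$ lives in Case $C$ together with the non-Case-$B$ part of $\mathcal{NR}$), and note $\mathcal{HL}_B$ need not be handled here since it is already part of the definition and is the term passed to the normal form. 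Concretely, by Lemma \ref{General NLS Decomp}, once Case $A$ (the resonant $\mathcal{R}^1$ already removed) and Case $B$ with one dominant odd frequency (i.e.\ $\mathcal{HL}_B$) are excluded, every remaining configuration satisfies either $|\Phi|\gtrsim k_1^*$ \emph{or} $(k_3^*)^2\gtrsim k_1^*$ or $k_1^*\sim k_2^*$; in all of these one either has a genuine gain of $\langle k_1^*\rangle^{-1}$ from $\Phi$ in the denominator, or two (in fact three) comparably large internal frequencies to absorb the output weight $\langle k\rangle^{s+\varepsilon}$.

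The key steps, in order, are: (i) perform the duality/Cauchy–Schwarz reduction to bound $\|\mathcal{E}[v,\dots,v]\|_{X^{s+\varepsilon,-1/2+}_T}$ by $\prod\|v\|_{X^{s,b}_T}$ times $\sup_{k}\big(\sum_{k=k_1-\cdots+k_p}\langle k\rangle^{2(s+\varepsilon)}\langle\Phi\rangle^{-2b'}\prod_i\langle k_i\rangle^{-2s}\big)^{1/2}$; (ii) in the $|\Phi|\gtrsim k_1^*$ regime, use $\langle\Phi\rangle^{-2b'}\lesssim \langle k_1^*\rangle^{-2b'}$ to pay for the $\langle k\rangle^{s+\varepsilon}\lesssim \langle k_1^*\rangle^{s+\varepsilon}$ and one factor of $\langle k_1^*\rangle^{-2s}$, then sum the remaining $p-1$ frequencies, each contributing at worst $\langle\cdot\rangle^{\max(1-2s,0)}$ after a discrete one-dimensional sum constrained by the linear relation — this is exactly the computation in the proof of Lemma \ref{Lemma: Normal Form Regularity}, and gives finiteness when $\varepsilon<(p-1)s-\frac{p-3}{2}$ and $\varepsilon<1$; (iii) in the $(k_3^*)^2\gtrsim k_1^*$ or $k_1^*\sim k_2^*$ regime (Case $C$ minus the above), there are at least three internal frequencies of size $\gtrsim (k_1^*)^{1/2}$, so $\langle k\rangle^{s+\varepsilon}\prod_i\langle k_i\rangle^{-2s}$ can be controlled after distributing the output weight among them, and here the constraint $\varepsilon<s-\frac{p-5}{2(p-1)}$ appears — it is precisely the condition that makes the discrete sum over the ``middle'' frequencies, carried out via the multilinear Strichartz estimate \eqref{Multilinear Strichartz} with its $N_i^{\frac{p-5}{2(p-1)}+}$ losses, summable after we have spent $\langle k_1^*\rangle^{-1/2}$ of gain; (iv) finally, recover the time factor $T^\delta$ by working on the $\chi(t/T)$-truncated functions and using that $b=1/2+$ while the output is measured at $-1/2+$, so a small $b\to b-$ trade on the restricted norm yields a positive power of $T$ via the standard $\|u\|_{X^{s,b-}_T}\lesssim T^{\delta}\|u\|_{X^{s,b}_T}$ inequality (equivalently, since $b'<1/2$ we have slack in the $\langle\tau\pm k^2\rangle$ exponents).

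The main obstacle I expect is step (iii): in the Case $C$ portion there is no gain from $\Phi$ at all, so the entire estimate must come from counting frequencies, and one has to be careful that after using three large internal frequencies to beat $\langle k\rangle^{s+\varepsilon}$ and the weights, the residual $p-3$ ``small'' summations together with the loss $N^{\frac{p-5}{2(p-1)}+}$ per factor from \eqref{Multilinear Strichartz} do not overwhelm the budget — this is what forces the threshold $\varepsilon<s-\frac{p-5}{2(p-1)}=s-s_c$ and also explains why smoothing cannot begin below $s=\frac{p-3}{2(p-1)}$. A secondary subtlety is bookkeeping the conjugates and the distinction between ``resonant but not Case $A$'' configurations (the $\mathcal{R}^2$ part), where one must invoke the observation following \eqref{Definition: R1} that $\mathcal{R}^2$ automatically has three internal frequencies $\gtrsim|k|$, placing it squarely in the Case $C$ analysis; once that is noted, $\mathcal{R}^2$ is handled by the same argument as (iii) with room to spare.
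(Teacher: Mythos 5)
Your overall architecture (duality and the Cauchy--Schwarz reduction from \eqref{CS Application Bound}, splitting according to Lemma \ref{General NLS Decomp}, and the multilinear Strichartz estimate \eqref{Multilinear Strichartz} for the Case C portion with the transfer $N_1^{\varepsilon+}\lesssim (N_2N_3)^{\varepsilon+}$ forcing $\varepsilon<s-\frac{p-5}{2(p-1)}$) is the same as the paper's, and your handling of $\mathcal{R}^2$ by noting it has three internal frequencies $\gtrsim|k|$ is also consistent. The gap is in your step (ii). After the duality reduction the only gain available from the phase is $\langle\Phi\rangle^{-2b'}$ with $b'<1/2$, i.e. essentially \emph{one} inverse power of $\Phi$, not two. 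A merely linear lower bound $|\Phi|\gtrsim k_1^*$ therefore buys only $\langle k\rangle^{-1+}$ in the squared sum, and the computation you describe closes only for
\[
\varepsilon<\min\Bigl((p-1)s-\tfrac{p-2}{2},\ \tfrac12\Bigr),
\]
strictly short of the claimed $\min\bigl((p-1)s-\frac{p-3}{2},1\bigr)$ (for $p=5$, $s=1$ you would get $\varepsilon<\frac12$ instead of $\varepsilon<1$). Your appeal to ``exactly the computation in Lemma \ref{Lemma: Normal Form Regularity}'' is where this slips: there the operator $T$ carries an explicit $1/\Phi$ multiplier, so after Cauchy--Schwarz one has $|\Phi|^{-2}$ in the sum; here one has only $\langle\Phi\rangle^{-2b'}\approx\langle\Phi\rangle^{-1+}$.

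The missing structural observation, which is how the paper closes this case, is that once Case A is absent and $\mathcal{HL}_B$ (single dominant \emph{odd}-indexed frequency) has been removed from $\mathcal{E}$, any configuration lying in Case B but not in Case C must have its single dominant frequency \emph{even}-indexed; for such configurations \eqref{Equation: Phi big when even big} upgrades the bound to $|\Phi|\gtrsim (k_1^*)^2\gtrsim k^2$. Then $\langle\Phi\rangle^{-2b'}\lesssim\langle k\rangle^{-2+}$, the dominant $\langle k_2\rangle^{-2s}$ cancels $\langle k\rangle^{2s}$, and summing the remaining $p-1$ frequencies (all $\ll|k|$) gives $\langle k\rangle^{2\varepsilon-2+(p-1)\max(1-2s,0)+}$, which is precisely what produces the full range $\varepsilon<\min\bigl((p-1)s-\frac{p-3}{2},1\bigr)$, as in \eqref{Lemma: HH case 4}. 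Note also that these even-dominant terms cannot be rerouted through your step (iii): in that regime $(k_3^*)^2\ll k_1^*$ and $k_1^*\gg k_2^*$, so the Case C counting fails and the quadratic $\Phi$ gain is genuinely needed; your parenthetical claim that, after removing $\mathcal{HL}_B$, the rest of $\mathcal{NR}$ ``lives in Case C'' is exactly the false reduction to avoid.
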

\begin{proof} Before proceeding, we again reorganize so that $|k_1|\geq |k_3|\geq\cdots\geq |k_p|$ and similarly for the even indices. 

\textbf{Case A,``Resonance":} Recalling \eqref{Definition: R1}, \eqref{Definition: HL1}, and Lemma \ref{General NLS Decomp}, we see that we have no Case $A$.

\textbf{Case B, } $|\Phi|\gtrsim k_1^*$: By the definition of Case C and the restrictions on  \eqref{Definition: E}, we see that the only terms that live solely in this case remaining after removal are those with $k_2\gg k_2^*$, in which case $|\Phi|\gtrsim (k_1^*)^2$ by \eqref{Equation: Phi big when even big}. By Cauchy-Schwarz, we reduce to bounding
\begin{align}\label{Lemma: HH case 4}
\sup_{k,\tau}\sum_{\substack{|k_2|\gtrsim |k|\\k_2^*\ll |k|\\|\Phi|\gtrsim k^2}} \frac{\langle k\rangle^{2s+2\varepsilon-2+}}{\prod_{i=1}^p\langle k_i\rangle^{2s}}\lesssim \langle k\rangle^{2\varepsilon -2+(p-1)\max(1-2s, 0)+},
\end{align}
which gives smoothing of order
\[
0 < \varepsilon < \min\left((p-1)s-\frac{p-3}{2}, 1\right).
\]
We have again used that $k_2^*\lesssim |k_1|$ in the above.

\textbf{Case C, $(k_3^*)^2\gtrsim k_1^*$ or $k_1^*\sim k_2^*$:} We assume that we have the first situation, as the other will follow in an easier and similar fashion. This case will then follow from the multilinear Strichartz estimate \eqref{Multilinear Strichartz}. Let
\[
\varepsilon = s - \frac{p-5}{2(p-1)}.
\] 
We apply duality with $\eta\in X^{-s-\varepsilon,1/2-}$ to find that we must bound
\begin{align}
    \left|\underset{\Gamma_p}{\int\sum}\overline{\widehat{\eta}}_k\prod_{\substack{j=1\\odd}}^{p}\widehat{v}_{k_j}\prod_{\substack{j=2\\even}}^p\overline{\widehat{v}}_{k_j}\,d\Gamma\right|\label{HH case C bound to show}.
\end{align}
Let $N$ and $N_{k_\ell}$ for $1\leq \ell\leq p$ be dyadic with $|k|\sim N$ and $|k_\ell| \sim N_{\ell}$. We ignore conjugates and assume that $N_1\geq N_2\cdots\geq N_p$. Now, it suffices to show for each collection of fixed $N, N_{1}, \cdots, N_{p}$ satisfying $N_{1}\lesssim N_{3}^2$ that \eqref{HH case C bound to show} is bounded by:
\begin{align*}
N^{-s-\varepsilon-}\|P_{N}\eta\|_{X^{0, 1/2-}}N^{s-}_{1}\|P_{N_{1}}v\|_{X^{0, 1/2-}_T}\prod_{j=2}^3N_{j}^{\frac{p-5}{2(p-1)}+\varepsilon}\|P_{N_{j}}v\|_{X^{0, 1/2+}_T}\prod_{j\geq 4}^p N^{s-}_{j}\|P_{N_{j}}v\|_{X^{0, 1/2+}_T}.
\end{align*}
as we don't care about endpoints. Since we find
\begin{equation}\label{Equation: Case C Dyadic Condition}
N_{1}^{\varepsilon+}\lesssim (N_{3}N_{2})^{\varepsilon+},
\end{equation}
we ignore conjugates and apply Cauchy-Schwarz and \eqref{Multilinear Strichartz} to a single term of the dyadic decomposition of \eqref{HH case C bound to show} for fixed admissible $N, N_{1}, \cdots, N_{p}$ to get
\begin{align*}
    &\lesssim \left\|\prod_{\substack{j=1\\odd}}^pP_{N_{j}}v\right\|_{L^2_{x,t}}\left\|P_{N_k}\eta\prod_{\substack{j=2\\even}}^pP_{N_{j}}v\right\|_{L^2_{x,t}}\\
    &\,\,\lesssim N^{-s-\varepsilon-}\|P_{N}\eta\|_{X^{0, 1/2-}}N_{1}^{s+\varepsilon+}\|P_{N_{1}}v\|_{X^{0, 1/2-}_T}\\
    &\qquad\qquad\times\prod_{j=2}^3N_{j}^{\frac{p-5}{2(p-1)}+}\|P_{N_{j}}v\|_{X^{0, 1/2+}_T}\prod_{j\geq 4}^p N^{s-}_j\|P_{N_{j}}v\|_{X^{0, 1/2+}_T}\\
    &\,\,\lesssim N^{-s-\varepsilon-}\|P_{N}\eta\|_{X^{0, 1/2-}}N^{s-}\|P_{N_{1}}v\|_{X^{0, 1/2-}_T}\\
    &\qquad\qquad\times\prod_{j=2}^3N_{j}^{\varepsilon+\frac{p-5}{2(p-1)}+}\|P_{N_{j}}v\|_{X^{0, 1/2+}_T}\prod_{j\geq 4}^p N^{s-}_j\|P_{N_{j}}v\|_{X^{0, 1/2+}_T}.
\end{align*}
Note that at the final step we use \eqref{Equation: Case C Dyadic Condition} and 
\[
\varepsilon + \frac{p-5}{2(p-1)}+ < s.
\]
It follows that we have smoothing of order
\[
0 < \varepsilon < s - \frac{p-5}{2(p-1)}.
\]

In all cases, we have room for a small power of $T$ by localization.

\end{proof}

Having estimated the nicest of the terms, we now turn to the additional terms created in using the transformation.

\begin{lemma}\label{Lemma: HL[T] Regularity}
Let $p\geq 5$ be odd, $s > \frac{p-3}{2(p-1)}$, and $0 < T\ll 1$. Then there is $\delta > 0$ so that
\[
\|\mathcal{HL}_B[T[W_tu_0, v, \cdots, v], v, \cdots, v]\|_{X^{s+\varepsilon, -1/2+}_T}\lesssim T^\delta\|u_0\|_{H^s_x}\|v\|_{X^{s, 1/2+}_T}^{2p-2},
\]
for 
\[
0 < \varepsilon < 1.
\]
\end{lemma}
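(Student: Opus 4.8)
The plan is to mimic the Cauchy--Schwarz reduction already used in Lemma~\ref{Lemma: Normal Form Regularity} and Lemma~\ref{Lemma: HH & R2 regularity}, but now applied to the composite multilinear operator $\mathcal{HL}_B[T[W_tu_0,v,\cdots,v],v,\cdots,v]$, which has $2p-2$ interior inputs: one $W_tu_0$, together with $2p-3$ copies of $v$. First I would write the Fourier multiplier of $\mathcal{HL}_B[T[W_tu_0,\ldots],v,\ldots,v]$ explicitly: the outer operator $\mathcal{HL}_B$ forces one \emph{single} external frequency, call it $\kappa_1$ (the output of the inner $T$), to dominate in the sense $|\kappa_1|\gg \kappa_2^*$ among the outer variables, and supplies no denominator; the inner operator $T = \frac{p+1}{2}T_1$ supplies the denominator $\Phi(\kappa_1, k_1,\ldots,k_p)$ associated with its own frequencies, where $W_tu_0$ sits in slot $k_1$ with $|k_1|\gg k_2^*$ and Case~B holds for the inner tuple, so $|\Phi|\gtrsim |k_1|\gtrsim |\kappa_1|$. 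I would then invoke duality against $\eta\in X^{-s-\varepsilon,1/2-}$ and apply the Cauchy--Schwarz lemma (\cite{erdogan2017smoothing}, Lemma~2.1) exactly as in \eqref{CS Application Bound} to pass to the purely frequency-side sum, reducing matters to showing that
\[
\sup_{K}\ \sum \frac{\langle K\rangle^{2s+2\varepsilon}}{|\Phi(\kappa_1,k_1,\ldots,k_p)|^{2}\,\langle \kappa_1\rangle^{2s}\langle k_1\rangle^{2s}\prod (\text{remaining }\langle\cdot\rangle^{2s})}
\]
is finite, where $K$ is the final external frequency and the sum runs over all internal frequencies of the $(2p-2)$-linear operator subject to the summation restrictions. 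Here $W_tu_0$ contributes $\langle k_1\rangle^{-s}$ to the denominator (it is measured in $H^s_x$) while the $X^{s,1/2+}$ inputs each contribute a $\langle\cdot\rangle^{-s}$, and the $X^{0,1/2+}$ time regularity of $W_tu_0$ is free because the free evolution lives on the curve $\tau=-k_1^2$.

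The arithmetic is then the point. Since $\mathcal{HL}_B$ is a single-high-frequency operator, $|K|\sim|\kappa_1|$ and the \emph{other} outer $v$'s are all $\lesssim \kappa_2^* \ll |\kappa_1|$, so the $\langle K\rangle^{2s}$ in the numerator is exactly cancelled by the $\langle\kappa_1\rangle^{2s}$ in the denominator. That leaves $\langle K\rangle^{2\varepsilon}/|\Phi|^{2}$ times $\langle k_1\rangle^{-2s}$ times $2p-3$ factors $\langle\cdot\rangle^{-2s}$. Using $|\Phi|\gtrsim|k_1|\gtrsim|K|$ twice I gain $\langle K\rangle^{2\varepsilon-2}$ and one spare factor $\langle k_1\rangle^{-\text{(something)}}$; the key combinatorial fact, already used in the proof of Lemma~\ref{Lemma: Normal Form Regularity}, is that $k_2^*\ll|k_1|$ lets me sum the remaining internal variables freely — there is a free variable among the outer $v$'s (the external frequency $K$ is determined by $\kappa_1$ up to the small outer corrections, and $\kappa_1$ by $k_1$ up to the small inner corrections), so that the sum over the roughly $2p-4$ genuinely free low-frequency variables produces at worst a factor $\langle K\rangle^{(2p-4)\max(1-2s,0)}$ — wait, I must be careful: each low variable $m$ contributes $\sum_{\langle m\rangle\lesssim\langle K\rangle}\langle m\rangle^{-2s}\lesssim \langle K\rangle^{\max(1-2s,0)}$, and there are $2p-3$ such low $v$-variables, so I collect $\langle K\rangle^{(2p-3)\max(1-2s,0)}$. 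Altogether the bound is $\langle K\rangle^{2\varepsilon-2+(2p-3)\max(1-2s,0)}$, which is summable (indeed bounded in $K$) provided $\varepsilon<1-\frac{2p-3}{2}\max(1-2s,0)$; since the hypothesis is $s>\frac{p-3}{2(p-1)}$ one checks $\frac{2p-3}{2}(1-2s) < \frac{2p-3}{2}\cdot\frac{1}{p-1}<1$ is \emph{not} automatic, so I expect the real argument to extract \emph{two} powers of $\langle k_1\rangle$ from $|\Phi|\gtrsim|k_1|$ (rather than transferring only one to $\langle K\rangle$), keeping the second to damp the inner low-frequency sum, exactly as \eqref{Equation: Phi big when even big} is exploited in Lemma~\ref{Lemma: HH & R2 regularity}; this makes the exponent $2\varepsilon-2+(p-2)\max(1-2s,0)$ or better, which for $s>\frac{p-3}{2(p-1)}$ is $<0$ for all $\varepsilon<1$, matching the claimed range.

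After the frequency sum is shown finite, the estimate $\|\mathcal{HL}_B[T[W_tu_0,v,\ldots],v,\ldots,v]\|_{X^{s+\varepsilon,-1/2+}_T}\lesssim \|u_0\|_{H^s_x}\|v\|_{X^{s,1/2+}_T}^{2p-2}$ follows by the same duality-plus-Cauchy--Schwarz packaging as in Lemma~\ref{Lemma: HH & R2 regularity}, and the gain of a small power $T^\delta$ comes from the usual time-localization trick: choosing the exponent $\varepsilon$ strictly below the threshold leaves a sliver of room in the $\langle\tau\rangle^{b}$ weights, and replacing $b=1/2+$ by a slightly smaller $b'$ in the multilinear pieces — or equivalently replacing $X^{s+\varepsilon,-1/2+}$ by $X^{s+\varepsilon,-1/2+\delta}$ — produces a factor $T^{\delta}$ by the standard $X^{s,b}_T$ time-localization lemma (\cite{erdougan2016dispersive}). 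The main obstacle, as indicated above, is bookkeeping the frequency budget through the \emph{composition}: one must verify that the single large frequency $\kappa_1$ fed into $\mathcal{HL}_B$ is forced to coincide (up to lower-order terms) with the large frequency $k_1$ carrying $W_tu_0$ inside $T$, so that the $\langle K\rangle^{2\varepsilon}$ is genuinely controlled by $|\Phi|^{-2}\gtrsim|k_1|^{-2}$; once that alignment is established, no interaction in $\mathcal{HL}_B[T[\cdots],v,\ldots]$ can evade the decay, and the smoothing holds for the full range $0<\varepsilon<1$ since the $\Phi$-denominator here is quadratically large in $|k_1|$ rather than merely linear.
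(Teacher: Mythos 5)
Your first step---expanding the composition into a single higher-order multilinear operator in which $W_tu_0$ sits at the unique large frequency $k_1$, all other inputs are $\ll |k_1|$, and the inner weight satisfies $|\Phi|\gtrsim |k_1|\gtrsim |k|$---is the same reduction the paper makes (note, though, that the composite operator has $2p-1$ inputs with $2p-2$ copies of $v$, not $2p-2$ inputs with $2p-3$ copies). The genuine gap is in how you try to close the estimate. You run the Cauchy--Schwarz/frequency-counting bound \eqref{CS Application Bound}, and the arithmetic does not close: with $k$ fixed there are $2p-2$ free low variables, each summation costing $\langle k\rangle^{\max(1-2s,0)}$, so the exponent is $2\varepsilon-2+(2p-2)\max(1-2s,0)$, and as $s\downarrow \frac{p-3}{2(p-1)}$ one has $(2p-2)(1-2s)\to 4$, so the supremum is infinite for every $\varepsilon>0$. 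Your proposed repair does not rescue this: the total budget supplied by the weight is exactly two powers of $\langle k_1\rangle$ (from $|\Phi|^{-2}$ after squaring), which you have already spent producing $\langle k\rangle^{2\varepsilon-2}$, so there is no ``second'' power left to damp the low-frequency sum; moreover the quadratic lower bound \eqref{Equation: Phi big when even big} is only available when the dominant frequency is even-indexed, which is precisely not the configuration here (Case B with $|k_1|\gg k_2^*$ and $W_tu_0$ in the odd slot only gives $|\Phi|\gtrsim k_1^*$). Even granting your exponent $2\varepsilon-2+(p-2)\max(1-2s,0)$, it is not negative for all $\varepsilon<1$: for $p=5$ and $s$ near $\tfrac14$ it is roughly $2\varepsilon-\tfrac12$.

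The missing idea, and the route the paper takes, is to spend only one power of the weight, $\langle k_1\rangle^{\varepsilon}/|\Phi|\lesssim 1$ for $\varepsilon<1$, after which the remaining bound is exactly the $(2p-1)$-linear estimate underlying local well-posedness of the $(2p-1)$-NLS; this is proved with the $L^2$-based multilinear Strichartz estimate \eqref{Multilinear Strichartz}, which is what makes the threshold $s>\frac{2p-6}{2(2p-2)}=\frac{p-3}{2(p-1)}$ attainable. Pure level-set counting cannot reach this regularity (this is the point of the paper's remark following \eqref{CS Application Bound} about needing more than the counting argument below the embedding level). Your duality set-up, the observation that $W_tu_0$ costs only $\|u_0\|_{H^s_x}$, and the extraction of $T^\delta$ by taking $b'<1/2$ and localizing are all fine; but as written your core multilinear estimate fails in the stated range of $s$ and $\varepsilon$, so the proof does not go through without replacing the Cauchy--Schwarz step by the multilinear Strichartz input.
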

\begin{proof}
We have by restrictions on \eqref{defn of T} and \eqref{Definition: HL1} that $k_1\gg k_2^*$. Moreover, we see that
\begin{align*}
&{\mathcal{HL}_B}[T[W_tu_0, v, \cdots, v], v, \cdots, v]_k \\ &\,\,=\frac{p+1}{2}\sum_{\substack{k=k_1-k_2+\cdots-k_{p-1} +k_p\\|k_1|\gg k_2^*\\ Case\, B}}\widehat{T}[W_tu_0, v, \cdots, v]_k\prod_{\substack{j=3\\odd}}^{p}\widehat{v}_{k_j}\prod_{\substack{l=2\\even}}^p\overline{\widehat{v}_{k_l}}\\
&\,\,=\left(\frac{p+1}{2}\right)^2\sum_{\substack{k=k_1-k_2+\cdots-k_{p-1} +k_p\\|k_1|\gg k_2^*\\ Case\, B}}\sum_{\substack{k_1=h_1-h_2+\cdots-h_{p-1} +h_p\\|h_1|\gg h_2^*\\ Case\, B}}\frac{\widehat{(W_tu_0)_{h_1}}\prod_{\substack{r=3\\odd}}^{p}\widehat{v}_{h_r}\prod_{\substack{s=2\\even}}^p\overline{\widehat{v}_{h_s}}}{\Phi(k_1, h_1\cdots, h_p)}\\
&\qquad\qquad\qquad\qquad\qquad\qquad\times\prod_{\substack{j=3\\odd}}^{p}\widehat{v}_{k_j}\prod_{\substack{l=2\\even}}^p\overline{\widehat{v}_{k_l}}\\
\end{align*}
By relabeling variables, noting that $h_1\gtrsim k_1^*,$ we reduce to bounding
\[
\left(\frac{p+1}{2}\right)^2\sum_{\substack{k=k_1-k_2+\cdots-k_{p-1} +k_{2p-1}\\|k_1|\gg k_2^*\\|\Phi(k_1-k_2+\cdots+k_p, k_1\cdots, k_p)|\gtrsim |k_1| }}\frac{\widehat{(W_tu_0)}_{k_1}}{\Phi(k_1-k_2+\cdots+k_p, k_1\cdots, k_p)}\prod_{\substack{j=3\\odd}}^{2p-1}\widehat{v}_{k_i}\prod_{\substack{l=2\\even}}^{2p-1}\overline{\widehat{v}_{k_l}},
\]
which is just a $2p-1$ multilinear operator with a smoothing weight. Since we don't care about the endpoint value of $s$, it follows by duality, ignoring conjugates, and taking $\varepsilon < 1$ so that
\[
\frac{\langle k_1\rangle^{\varepsilon}}{\Phi(k_1-k_2+\cdots+k_p, k_1, \cdots, k_p)}\lesssim 1,
\]
that it suffices to show the bound
\[
\left|\underset{\Gamma_{2p-1}}{\int\sum}\overline{\widehat{\eta}}_k\widehat{(W_tu_0)}_{k_1}\prod_{\substack{j=3\\odd}}^{2p-1}\widehat{v}_{k_j}\prod_{\substack{l=2\\even}}^{2p-1}\overline{\widehat{v}}_{k_l}\,d\Gamma\right|\lesssim \|\eta\|_{X^{-s-\varepsilon,1/2-}}\|u_0\|_{H^s_x}\|v\|^{2p-2}_{X^{s,1/2+}_T}.
\]

Indeed, by the multilinear Strichartz estimate \eqref{Multilinear Strichartz} for $0 < \varepsilon < 1$, we  have the bound
\begin{align*}
\|\mathcal{HL}_B[T[W_tu_0, v, \cdots, v],v, \cdots ,v]\|_{X^{s+\varepsilon, 1/2+}_T} &\lesssim \|W_tu_0\|_{X^{s,1/2-}_T}\|v\|^{2p-2}_{X^{s, 1/2-}_T}\\
&\lesssim T^\delta\|u_0\|_{H^s_x}\|v\|^{2p-2}_{X^{s, 1/2+}_T},
\end{align*}
by localization, given 
\[
s > \frac{2p-6}{2(2p-2)} = \frac{p-3}{2(p-1)}.
\]

\end{proof}

\begin{lemma}\label{Lemma: HL[z] regularity}
Let $p\geq 5$ be odd, $s > \frac{p-5}{2(p-1)}$, and $0 < T\ll 1$. Then there is some $\delta > 0$
\[
\|\mathcal{HL}_B[z, v, \cdots, v]\|_{X^{s+\varepsilon, -1/2+}_T}\lesssim_\delta T^\delta \|z\|_{X^{s+\varepsilon, 1/2+}_T}\|v\|_{X^{s, 1/2+}_T}^{p-1},
\]
for any $\varepsilon \in \mathbb{R}$.
\end{lemma}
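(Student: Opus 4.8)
\textbf{Proof proposal for Lemma \ref{Lemma: HL[z] regularity}.}

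The plan is to follow the same duality-plus-multilinear-Strichartz template used in Lemma \ref{Lemma: HH \& R2 regularity}, Case C, but now exploiting the fact that the $z$-entry already carries $s+\varepsilon$ regularity, so we do not need to tease out any new derivatives from $\Phi$. First I would reorganize the frequencies so that $|k_1|\geq |k_3|\geq\cdots$ and similarly for the evens, and recall that the summation restrictions built into $\mathcal{HL}_B$ in \eqref{Definition: HL1} force us into Case $B$, hence $|k_1|\gg k_2^*$ and $|\Phi|\gtrsim k_1^*$; in particular $k_1$ is the frequency that must be placed on $z$ up to harmless symmetrizations (the combinatorial factor is absorbed into the implicit constant). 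The key structural point is that, because $z$ already lives in $X^{s+\varepsilon,1/2+}$, the output weight $\langle k\rangle^{s+\varepsilon}$ is controlled by $\langle k_1\rangle^{s+\varepsilon}$ (up to the usual $k\lesssim k_1$ coming from $k_1\gg k_2^*$), so \emph{no} lower bound on $\Phi$ is needed at all — which is exactly why the statement holds for any $\varepsilon\in\mathbb{R}$.

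Concretely, I would invoke duality against $\eta\in X^{-s-\varepsilon,1/2-}$ and reduce to bounding
\[
\left|\underset{\Gamma_p}{\int\sum}\overline{\widehat{\eta}}_k\,\widehat{z}_{k_1}\prod_{\substack{j=3\\ \mathrm{odd}}}^p\widehat{v}_{k_j}\prod_{\substack{j=2\\ \mathrm{even}}}^p\overline{\widehat{v}}_{k_j}\,d\Gamma\right|\lesssim \|\eta\|_{X^{-s-\varepsilon,1/2-}}\|z\|_{X^{s+\varepsilon,1/2+}_T}\|v\|_{X^{s,1/2+}_T}^{p-1}.
\]
Moving the weight $\langle k\rangle^{s+\varepsilon}$ onto $k_1$ reduces matters to an $L^2_{x,t}$ estimate for a product of $(p+1)/2$ factors times the $(p-1)/2$ conjugated factors; grouping the odd-indexed terms into one $L^2_{x,t}$ factor and the $\eta$ together with the even-indexed terms into another, Cauchy-Schwarz in $(x,t)$ followed by two applications of the multilinear Strichartz estimate \eqref{Multilinear Strichartz} does the job. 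One factor receives the full weight on its highest frequency (which is $k_1$, carrying $z$), and on every remaining factor we have the trivial bound $N_j^{\,(p-5)/2(p-1)+}\lesssim N_j^{\,s-}$ since $s>\frac{p-5}{2(p-1)}$, so the $v$-norms are at regularity $s$ as claimed; the $\varepsilon$ simply rides along with the $k_1$ factor and is never needed to close a sum. Finally the small power $T^\delta$ comes, as in the previous lemmas, from time localization by replacing $b'=1/2+$ with a slightly smaller exponent in \eqref{Multilinear Strichartz} and paying $T^\delta$.

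The only mild subtlety — and the step I would be most careful about — is the bookkeeping when $k_1$ is \emph{not} the frequency attached to $z$ after the forced reorganization; i.e. one must check that in $\mathcal{HL}_B[z,v,\dots,v]$ the entry $z$ is genuinely the first (odd) slot and therefore that $|k_1|\gg k_2^*$ is exactly the statement that $z$ carries the top frequency. This is immediate from the definition of $\mathcal{HL}_B$ in \eqref{Definition: HL1} and the placement of $z$ in the first argument, but it is worth spelling out so that the reduction ``put $\langle k\rangle^{s+\varepsilon}$ on $k_1$'' is justified; everything after that is the standard Cauchy–Schwarz/multilinear-Strichartz routine and there is no resonant (Case $A$) or $\Phi$-dependent (Case $B$) difficulty to handle separately, precisely because the smoothing weight is already present on the input.
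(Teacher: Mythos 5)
Your proposal is correct and follows essentially the same route as the paper: a direct application of the multilinear Strichartz estimate \eqref{Multilinear Strichartz} (via duality and the Cauchy--Schwarz grouping from Case C of Lemma \ref{Lemma: HH & R2 regularity}), placing the entire $\langle k\rangle^{s+\varepsilon}$ weight on the $z$-entry — justified since $|k_1|\gg k_2^*$ forces $|k|\sim|k_1|$ — and putting the $v$-factors at regularity $\frac{p-5}{2(p-1)}+ < s$, with $T^\delta$ from localization. The paper states this more tersely, but the substance is identical.
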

\begin{proof}
By a direct application of the multilinear Strichartz estimate \eqref{Multilinear Strichartz} in the same way one proves local well posedness and similar to the way we showed smoothing for Case C of Lemma \ref{Lemma: HH & R2 regularity}, we get:
\[
\|\mathcal{HL}_B[z,v, \cdots, v]\|_{X^{s+\varepsilon, -1/2+}_T}\lesssim  \|z\|_{X^{s+\varepsilon, 1/2-}_T}\|v\|_{X^{\frac{p-5}{2(p-1)}+,1/2+}_T}^{p-1},
\]
which is respectable and enough for any $\varepsilon \in \mathbb{R}$. Notice that we have placed the entire $s+\varepsilon$ weight on $z$, with a $\delta$ power of $T$ following from localization.
\end{proof}

\begin{lemma}\label{Lemma: T composite in XSB Regularity}
Let $p\geq 5$ be odd, $s > \frac{p-3}{2(p-1)}$, and $0 < T\ll 1$. Then there is $\delta > 0$ so that
\begin{align*}
\|T[W_tu_0, v, &\left(\mathcal{R}^2+\mathcal{NR}\right)[v, \cdots, v], v, \cdots, v]\|_{X^{s+\varepsilon, -1/2+}_T}\\
&+\|T[ W_tu_0, \left(\mathcal{R}^2+\mathcal{NR}\right)[v, \cdots, v],v, \cdots, v]\|_{X^{s, -1/2+}_T}\lesssim  T^\delta \|u_0\|_{H^s_x}\|v\|_{X^{s, 1/2+}_T}^{2p-2},
\end{align*}
for
\[
0 < \varepsilon < 1.
\]
\end{lemma}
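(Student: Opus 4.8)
The plan is to treat both terms appearing on the left-hand side in exactly the same fashion, since both are of the schematic form $T[W_tu_0, \dots, (\mathcal{R}^2 + \mathcal{NR})[v,\dots,v], \dots]$, i.e. a composition of the operator $T$ (which carries a $\Phi^{-1}$ weight and the Case~B restriction $|k_1| \gg k_2^*$, $|\Phi| \gtrsim |k_1|$) with the full non-resonant/resonant-$\mathcal{R}^2$ nonlinearity plugged into one of the $v$-slots. Expanding the inner $(\mathcal{R}^2 + \mathcal{NR})[v,\dots,v]$ back into its $p$ Fourier factors and relabeling, each term becomes a $(2p-1)$-linear operator whose symbol is $\Phi(k,k_1,\dots,k_p)^{-1}$ (from the outer $T$), with a single factor of $\widehat{(W_tu_0)}_{k_1}$ at the large frequency $|k_1| \gtrsim k_1^*$ and $2p-2$ factors of $\widehat v$. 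This is structurally identical to the operator bounded in Lemma~\ref{Lemma: HL[T] Regularity}, so I would follow that proof nearly verbatim.

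The key steps, in order: first, reorganize so $|k_1| \geq |k_3| \geq \cdots$ and observe that the Case~B restriction inherited from the outer $T$ gives $|\Phi| \gtrsim |k_1| \gtrsim k_1^*$, so that for $\varepsilon < 1$ we have $\langle k_1\rangle^{\varepsilon}/\langle\Phi\rangle \lesssim 1$; this lets us drop the $\Phi^{-1}$ weight entirely after absorbing the $\varepsilon$ smoothing derivative onto it, exactly as in Lemma~\ref{Lemma: HL[T] Regularity}. Second, pass to the $X^{s+\varepsilon,-1/2+}_T$ norm by duality against $\eta \in X^{-s-\varepsilon,1/2-}$ (resp. $X^{-s,1/2-}$ for the second term, which has no $\varepsilon$ gain), reducing to a multilinear integral over $\Gamma_{2p-1}$ with $\overline{\widehat\eta}_k$, $\widehat{(W_tu_0)}_{k_1}$, and $2p-2$ factors of $\widehat v$. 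Third, apply Cauchy--Schwarz to split into two $L^2_{x,t}$ norms of products and invoke the multilinear Strichartz estimate \eqref{Multilinear Strichartz} with $\tfrac{p+1}{2}$ and $\tfrac{p+1}{2}$ replaced by roughly $p$ factors each — more precisely, distribute the $2p-1$ factors $(\eta, W_tu_0, v, \dots, v)$ into two groups and apply \eqref{Multilinear Strichartz} to each, placing the full $s+\varepsilon$ (resp. $s$) weight on $\eta$ and noting that $(2p-1)$-linear Strichartz on the $2p-2$ remaining $v$-factors plus $W_tu_0$ costs only $N^{\frac{p-5}{2(p-1)}+}$ per factor, which is $\leq N^{s-}$ since $s > \frac{p-3}{2(p-1)} > \frac{p-5}{2(p-1)}$. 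Finally, convert $\|W_tu_0\|_{X^{s,1/2-}_T} \lesssim T^\delta\|u_0\|_{H^s_x}$ and bound the $v$-factors by $\|v\|_{X^{s,1/2+}_T}$, harvesting the $T^\delta$ from time localization (Lemma~1 and the standard embedding $X^{s,1/2-}_T \hookleftarrow T^\delta X^{s,1/2+}_T$).

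The main obstacle — though it is a mild one, given Lemma~\ref{Lemma: HL[T] Regularity} does essentially the same computation — is bookkeeping the frequency hierarchy after the relabeling: one must verify that the outer Case~B condition ($|k_1| \gg k_2^*$ relative to the \emph{outer} frequencies, with $|\Phi| \gtrsim |k_1|$) survives when the inner nonlinearity's frequencies are merged in, so that the decisive inequality $\langle k_1\rangle^\varepsilon \lesssim \langle\Phi\rangle$ still holds with $k_1$ the largest frequency overall. Because the inner nonlinearity $\mathcal{R}^2 + \mathcal{NR}$ is evaluated at the \emph{second} (or another interior) slot of $T$, its output frequency is at most $k_2^* \ll |k_1|$, so indeed $|k_1|$ remains the top frequency of the full $(2p-1)$-linear expression and $|\Phi| \gtrsim |k_1| \gtrsim k_j^*$ for all $j$; this is what allows the $\Phi^{-1}$ to absorb the smoothing. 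Once this is pinned down, the estimate closes for $0 < \varepsilon < 1$ and $s > \frac{p-3}{2(p-1)}$ exactly as claimed, with the $T^\delta$ gained from localizing each of the three Bourgain-norm factors.
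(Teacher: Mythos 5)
Your proposal follows essentially the same route as the paper: rewrite each composition as a $(2p-1)$-linear operator as in Lemma~\ref{Lemma: HL[T] Regularity}, use the Case~B restriction of the outer $T$ to cancel the $\varepsilon$ smoothing weight against $\Phi^{-1}$, and then reduce by duality and the multilinear Strichartz estimate \eqref{Multilinear Strichartz} to the local well-posedness estimate for the $(2p-1)$-NLS, which holds for $s>\frac{p-3}{2(p-1)}$, with $T^\delta$ from localization. Two small corrections: (i) your claim that $|k_1|$ remains the largest frequency of the full $(2p-1)$-linear expression does not follow from the smallness of the inner nonlinearity's \emph{output} frequency (the inner frequencies can be individually huge and nearly cancel), but nothing is lost, because the decisive inequality only involves the outer $\Phi$ and the external frequency $k$, which depend solely on the outer frequencies and satisfy $|\Phi|\gtrsim |k_1|\sim |k|$ under the restriction $|k_1|\gg k_2^*$; the paper simply uses $|\Phi|^{-1}\lesssim |k|^{-1}$ and ignores all remaining structure; (ii) when you apply \eqref{Multilinear Strichartz} to the $(2p-1)$-linear expression (i.e.\ with $2p-1$ in place of $p$), the cost per factor is $N_j^{\frac{p-3}{2(p-1)}+}$, not $N_j^{\frac{p-5}{2(p-1)}+}$; this is precisely why the hypothesis $s>\frac{p-3}{2(p-1)}$ is the right threshold, and the estimate still closes as you state.
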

\begin{proof}
Both of these terms are handled the same way. We turn both of these multilinear operators into $2p-1$ multilinear operators as in Lemma \ref{Lemma: HL[T] Regularity}. Then, by ignoring any other structure in the frequency space associated to these estimates and using
\[
\frac{1}{|\Phi|}\lesssim \frac{1}{|k|}
\] 
to cancel the $\langle k\rangle^{\varepsilon}$ factor, it suffices to show the bound
\[
\left|\underset{\Gamma_{2p-1}}{\int\sum}\overline{\widehat{\eta}}_k\prod_{\substack{j=1\\odd}}^{2p-1}\widehat{v}_{k_j}\prod_{\substack{j=2\\even}}^{2p-1}\overline{\widehat{v}}_{k_j}\,d\Gamma\right|\lesssim \|\eta\|_{X^{-s,1/2-}}\|v\|_{X^{s,1/2-}_T}\|v\|^{2p-2}_{X^{s,1/2+}_T}.
\]
This bound, however, is simply the estimate required for local well-posedness of the $2p-1$ NLS, and hence by the multilinear Strichartz estimate \eqref{Multilinear Strichartz} we get that $0 < \varepsilon < 1$ is admissible for $s > \frac{p-3}{2(p-1)}.$ 
\end{proof}
We now combine the lemmas together in order to prove the first smoothing result.
\begin{proof}[Proof of Theorem \ref{Theorem 1: Smoothing}]
$ $\newline
Let $s > \frac{p-3}{2(p-1)}$,  $\chi(t)\in\mathcal{S}(\mathbb{R})$ as defined earlier, $b>\frac{1}{2}$ sufficiently close to $\frac{1}{2}$, and $T$ the local well-posedness time for $v$, a solution to \eqref{v equation} emanating from $u_0$. Define
\begin{align}
    \mathfrak{F}(v) &= \mathcal{E}[v, \cdots, v]+ \mathcal{HL}_B[T[W_t u_0, v, \cdots, v], v, \cdots, v]\\
    &\qquad-  \frac{p-1}{2}T[W_tu_0, v, \left(\mathcal{R}^2+\mathcal{NR}\right)[v, \cdots, v],v, \cdots, v]\nonumber\\
&\qquad+ \frac{p-1}{2}T[ W_tu_0, \left(\mathcal{R}^2+\mathcal{NR}\right)[v, \cdots, v], v, \cdots, v]\nonumber,
\end{align}
for $0 < t < \tilde{T} < T$, and notice that Duhamel form manifests as
\begin{align*}
    \Gamma[z]&= \chi(t/\tilde{T})W_tT[u_0] \pm i \chi(t/\tilde{T})\int_0^tW_{t-s}\left(\mathfrak{F}(v)+\mathcal{HL}_B[z, v, \cdots, v]\right)\,ds.
\end{align*}
Applying Lemmas \ref{Lemma: Normal Form Regularity}, \ref{Lemma: HH & R2 regularity}, \ref{Lemma: HL[T] Regularity}, \ref{Lemma: HL[z] regularity}, and \ref{Lemma: T composite in XSB Regularity} and a standard limiting argument immediately gives
\begin{align}\label{z bound}
    \|z\|_{X^{s+\varepsilon, b}_T} \lesssim_{\varepsilon} \|u_0\|_{H^{s}_x}^p +\tilde{T}^\delta(& \|v\|_{X^{s,b}_T}^p + \|v\|_{X^{s, b}_T}^{2p-1} + \|u_0\|_{H^s_x}\|v\|_{X^{s, b}_T}^{2p-2})\\
    &+\tilde{T}^\delta \|z\|_{X^{s+\varepsilon, b}_T}\|v\|^{2p-2}_{X^{s,b}_T}.\label{Equation: Smoothing term to subtract}
\end{align}
We then invoke the bound from Lemma \ref{Lemma: v local well posedness} and notice that for $\tilde{T} = \tilde{T}(\|u_0\|_{H^s_x}, T),$ \eqref{Equation: Smoothing term to subtract} has a coefficient less than $1$. Subtracting and taking $\tilde{T}$ smaller if necessary establishes the smoothing bound for $z$.

With the substitution in mind, we add and subtract $T$ and apply the triangle inequality to we find that (for $|t| < \tilde{T}$)
\[
\|v-W_tu_0\|_{C^0_tH^{s+\varepsilon}_x}\lesssim \|z\|_{C^0_tH^{s+\varepsilon}_x}+\|T[W_tu_0, v, \cdots, v]\|_{C^0_TH^{s+\varepsilon}_x}\leq C(\|u_0\|_{H^s_x}),
\]
by Lemma \ref{Lemma: Normal Form Regularity}, \eqref{z definition}, and Lemma \ref{z bound}, where $v = \mathcal{F}^{-1}_x\left(L_t[u]\widehat{u}\right).$
\end{proof}
\section{Global Attractors}\label{Section: Global Attractors}
For the remainder of the paper we let $f\in H^{1}_x(\mathbb{T})$ be a time-independent forcing function, $\gamma > 0$, and $W_t^\gamma$ denote the propagator of the modified linear group
\[
iu_t+\bigtriangleup u + i\gamma u = 0,
\]
given by 
\[
\widehat{W_t^\gamma u_0} = e^{-t(ik^2+\gamma)}\widehat{u_0}. 
\]

The next lemma records several facts that will be used implicitly in the proof.

\begin{lemma}\label{Lemma: modified linear embeddings}
Let $0 < T\ll 1$, $\gamma > 0$, $b =1/2+$, $p>2$ odd, $u_0\in H^s_x(\mathbb{T})$, $u, v\in X^{s,b}_T$, $F\in X^{s,b-1}_T$, and $f\in H^{s+1}_x(\mathbb{T})$ be time-independent. Then there is $\theta > 0$ so that the following hold
\begin{itemize}
    \item[1)] $\|W_t^\gamma u_0\|_{X^{s,b}_T}\lesssim_{\gamma}\|u_0\|_{H^s_x(\mathbb{T})},$
    \item[2)] $\|\mathcal{F}^{-1}_x(L_t[u]\widehat{f})\|_{X^{s, b-1}_T}\lesssim_\gamma T^{\theta}\|f\|_{H^s_x(\mathbb{T})},$
    \item[3)] $\|\mathcal{F}^{-1}_x(L_t[u]\widehat{f})\|_{X^{s, 1/2-}_T}\lesssim_\gamma \langle \|u\|_{X^{s,b}_T}\rangle^{p-1} \|f\|_{H^{s+1}_x(\mathbb{T})}$
    \item[4)] $\|\mathcal{F}^{-1}_x(L_t[u]\widehat{f}-L_t[v]\widehat{f})\|_{X^{s, b-1}_T}\lesssim_\gamma T^\theta \langle \|u\|_{X^{s,b}_T}+\|v\|_{X^{s,b}_T}\rangle^{p-2}\|u-v\|_{X^{s,b}_T}\|f\|_{H^s_x(\mathbb{T})}$
    \item[5)] $\left\|\int_0^t W_{t-s}^\gamma F(x,s)\,ds\right\|_{X^{s,b}_T}\lesssim_\gamma \|F\|_{X^{s,b-1}_T}$
\end{itemize}
\end{lemma}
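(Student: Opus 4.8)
The plan is to prove each of the five items by the same two-step philosophy: (i) reduce the modified group $W_t^\gamma$ and the modified Duhamel operator to the standard ones already controlled in the earlier lemmas (the background Bourgain-space lemma, the Strichartz estimate, and the multilinear Strichartz estimate \eqref{Multilinear Strichartz}), absorbing the damping factor $e^{-\gamma t}$ into constants; and (ii) deal with the time-dependent multiplier $L_t[u]$ by Taylor-expanding it in the exponent and exploiting that $\int_0^t\int_\mathbb{T}|u|^{p-1}$ is a nice function of $t$ controlled by $\|u\|_{X^{s,b}_T}^{p-1}$ via the Strichartz bound. For item 1, since $e^{-t(ik^2+\gamma)}=e^{-\gamma t}e^{-itk^2}$ and $e^{-\gamma t}$ is smooth and bounded on $[-T,T]$ with all derivatives bounded by constants depending on $\gamma$, one writes $W_t^\gamma u_0 = \chi(t)e^{-\gamma t}\cdot e^{it\partial_x^2}u_0$ on the relevant time interval and applies the first inequality of the background lemma after noting multiplication by $\chi(t)e^{-\gamma t}\in\mathcal{S}$-localized smooth functions is bounded on $X^{s,b}$ for $b=1/2+$; similarly item 5 follows from the second inequality of the background lemma together with a Duhamel identity relating $\int_0^t W_{t-s}^\gamma F\,ds$ to the undamped Duhamel operator applied to $e^{\gamma s}F$ (or, cleaner, by differentiating and checking it solves the inhomogeneous damped equation, then invoking the standard estimate with the $e^{-\gamma t}$ factor).

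For items 2 and 4, the key point is that $L_t[u]$ is a unimodular function of $t$ alone (independent of $x$), smooth in $t$, with $\partial_t L_t[u] = \mp i\frac{p+1}{4\pi}\big(\int_\mathbb{T}|u|^{p-1}(x,t)\,dx\big)L_t[u]$, and the quantity $\int_\mathbb{T}|u|^{p-1}\,dx$ is, after localization in time by $\chi(t/T)$, controlled in $H^{b-1}_t$ (or better) by $\|u\|_{X^{s,b}_T}^{p-1}$ using the Strichartz estimate $\|\chi(t/T)u\|_{L^{p-1}_x(\mathbb{T}\times\mathbb{R})}^{p-1}\lesssim\|u\|_{X^{s,b}_T}^{p-1}$ for $s>\frac{1}{2}-\frac{3}{p-1}$ (valid here since $s\ge 0$ suffices once $p-1>6$, and the low-$p$ cases are handled by Hölder and Sobolev embedding as the paper already flags). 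Thus $L_t[u]\widehat f$ is $\widehat f$ times a scalar time-function lying in a positive-regularity Sobolev space in $t$; multiplying the fixed spatial profile $\widehat f_k$ by this scalar and estimating in $X^{s,b-1}_T$ reduces to an $H^{b-1}_t$ estimate on the scalar times $\|f\|_{H^s_x}$, and the small power $T^\theta$ comes from the standard time-localization gain $\|\chi(t/T)g\|_{H^{b-1}_t}\lesssim T^\theta\|g\|_{H^{b'}_t}$ for $b-1<b'<0$ appropriately, since $F=\mathcal{F}^{-1}_x(L_t[u]\widehat f)$ is supported (in the time-localized version) on $|t|\lesssim T$. Item 4 is the Lipschitz/difference version: write $L_t[u]-L_t[v]=\int_0^1\partial_\sigma L_t[\sigma u+(1-\sigma)v]\,d\sigma$ (or more directly use $|e^{ia}-e^{ib}|\le|a-b|$ and telescope the polynomial $\int|u|^{p-1}-\int|v|^{p-1}$ into $p-2$ factors of $u$ or $v$ and one factor of $u-v$ via the algebraic identity $a^{p-1}-b^{p-1}=(a-b)\sum a^j b^{p-2-j}$ applied through the Fourier-side multilinear form), then apply the same Strichartz bookkeeping, collecting the factor $\langle\|u\|_{X^{s,b}_T}+\|v\|_{X^{s,b}_T}\rangle^{p-2}\|u-v\|_{X^{s,b}_T}$.

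Item 3 is the one that costs a derivative on $f$, which is why the hypothesis is $f\in H^{s+1}_x$ and the target regularity in the time variable is only $1/2-$ rather than $b-1=-1/2+$. Here the plan is not to localize aggressively in $t$ (no $T^\theta$ gain is claimed), but rather to bound $\|\mathcal{F}^{-1}_x(L_t[u]\widehat f)\|_{X^{s,1/2-}_T}$ by separating the contribution of $L_t[u]$ itself (which costs one time-derivative, i.e.\ the $H^{1/2-}_t$ norm of $L_t[u]$ is controlled since $\partial_t L_t[u]$ involves $\int|u|^{p-1}\in L^\infty_t$-ish, giving $\langle\|u\|_{X^{s,b}_T}\rangle^{p-1}$) from the contribution of $\widehat f$, for which the extra $\langle k\rangle^{-1}$ from $H^{1/2-}_t$ versus the spatial weight forces us to pay $\|f\|_{H^{s+1}_x}$ instead of $\|f\|_{H^s_x}$. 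Concretely I would write $W_{-t}\mathcal{F}^{-1}_x(L_t[u]\widehat f)$ and compute its $H^s_x H^{1/2-}_t$ norm directly, using that $L_t[u]$ has a time-Fourier transform decaying like $\langle\tau\rangle^{-1}\cdot(\text{stuff bounded by }\langle\|u\|^{p-1}\rangle)$ by the formula for $\partial_t L_t[u]$, and then the $\langle k^2+\tau\rangle^{1/2-}$ weight in $X^{s,1/2-}$ gets absorbed by $\langle k\rangle^{1}\langle\tau\rangle^{1/2-}$ up to constants, producing exactly one extra spatial derivative on $f$. The main obstacle, and the place requiring the most care, is precisely this item 3 bookkeeping — making sure the $1/2-$ in time interacts correctly with the $\langle k^2\rangle$ shift in the Bourgain weight so that exactly one (and not two) spatial derivatives on $f$ are consumed, and that the power of $\|u\|_{X^{s,b}_T}$ is $p-1$ and not larger; everything else is a routine transfer of the standard linear estimates through the bounded smooth scalar factor $L_t[u]$.
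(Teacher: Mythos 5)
Your proposal is correct and follows essentially the same route as the paper: exploit that $L_t[u]$ is an $x$-independent unimodular time factor, prove items 1, 2, 4 through the $H^s_xH^b_t$ characterization, time localization (negative modulation exponent gives the $T^\theta$), and the bound $|e^{ia}-e^{ib}|\le|a-b|$ with a telescoping of $|u|^{p-1}-|v|^{p-1}$, and prove item 3 by trading time regularity for one spatial derivative on $f$ via an $L^2_t$--$H^1_t$ interpolation of the $L_t[u]$ factor (your splitting $\langle\tau+k^2\rangle^{1/2-}\lesssim\langle k\rangle^{1-}\langle\tau\rangle^{1/2-}$ is a cosmetic variant of the paper's interpolation of $e^{-itk^2}L_t[u]\eta(t/T)$, giving the same $\|f\|_{H^{s+1}_x}\langle\|u\|_{X^{s,b}_T}\rangle^{p-1}$ bound). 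Item 5 is handled in the paper by citation to a standard lemma, and your conjugation identity $\int_0^t W^\gamma_{t-s}F\,ds=e^{-\gamma t}\int_0^t W_{t-s}(e^{\gamma s}F)\,ds$ is an acceptable equivalent.
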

\begin{proof}
These are all straightforward, but we include several of them for completeness. Let $\eta\in\mathcal{S}(\mathbb{R})$ be $1$ on $[-1,1]$ and $0$ outside $[-2,2]$. The first follows from the $H^s_xH^b_t$ characterization:
\begin{equation*}
    \|\eta(t/T)W_t^\gamma u_0\|_{X^{s,b}} = \|W_{-t}W_t^\gamma\eta(t/T) u_0\|_{H^s_xH^b_t} = \|e^{-t\gamma}\eta(t/T)\|_{H^b_t}\|u_0\|_{H^s_x}\lesssim \|u_0\|_{H^s_x}.
\end{equation*}

The second follows by time-localization:
\begin{align*}
    \|\mathcal{F}^{-1}(L_t[u]\widehat{f})\|_{X^{s, b-1}_T}&\lesssim T^{1/2-}\|\eta(t/T)\mathcal{F}^{-1}(L_t[u]\widehat{f})\|_{X^{s, 0}}\\
    &= T^{1/2-}\left\|\langle k\rangle^s\|e^{-itk^2}L_t[u]\eta(t/T)\|_{L^2_t}\widehat{f}\right\|_{\ell^2_k}\\
    &\lesssim T^{1-}\|f\|_{H^s_x}.
\end{align*}

The third follows from interpolation:
\begin{align}\label{forcing term in X^sb inequality}
    \|\mathcal{F}^{-1}(L_t[u]\widehat{f})\|_{X^{s, 1/2-}_T}&\lesssim \|\eta(t/T)\mathcal{F}^{-1}(L_t[u]\widehat{f})\|_{X^{s, 1/2-}}\nonumber\\
    &= \left\|\langle k\rangle^s\|e^{-itk^2}L_t[u]\eta(t/T)\|_{H^{1/2-}_t}\widehat{f}\right\|_{\ell^2_k},
\end{align}
but
\begin{align*}
    \|e^{-itk^2}L_t[u]\eta(t/T)\|_{L^2}&\lesssim T^{1/2}\\
    \|e^{-itk^2}L_t[u]\eta(t/T)\|_{H^{1}}&\lesssim \langle k\rangle^2\langle\|u^{p-1}(x,T)\eta(t/T)\|_{L^2_tL^1_x}\rangle T^{1/2} + T^{-1/2}\\
    &\lesssim \langle k\rangle^2\langle \|u\|_{X^{s,b}_T}\rangle^{p-1}T^{1/2}+T^{-1/2}.
\end{align*}
by Sobolev embedding, and standard Strichartz estimates for the $\|u\|_{L^{p-1}_x}^{p-1}$ term. Noting that the interpolation takes more of the $L^2_x$ bound, we see that substitution gives the desired bound.

The fourth follows similarly to the second. Indeed, 
\begin{align}\label{Forcing Term Contraction}
    \|\mathcal{F}^{-1}_x(L_t[u]\widehat{f}-&L_t[v]\widehat{f})\|_{X^{s, b-1}_T}\lesssim T^{1/2-}\|\mathcal{F}^{-1}_x(L_t[u]\widehat{f}-L_t[v]\widehat{f})\|_{X^{s, 0}_T}\nonumber\\
    &\leq T^{1/2-}\left\|\langle k\rangle^s\left\| \eta(t/T)\left(e^{i\frac{p-1}{2}\int_0^t\int_\mathbb{T}|u|^{p-1}-|v|^{p-1}\,dxds}-1\right)\right\|_{L^2_t}\widehat{f}\right\|_{\ell^2_k},
\end{align}
but
\begin{align*}
&\left\| \eta(t/T)\left(e^{i\frac{p-1}{2}\int_0^t\int_\mathbb{T}|u|^{p-1}-|v|^{p-1}\,dxds}-1\right)\right\|_{L^2_t}\lesssim \left\|\eta(t/T)\int_0^t\int_\mathbb{T}|u|^{p-1}-|v|^{p-1}\,dxds\right\|_{L^2_t}\\
&\qquad\qquad\qquad\qquad\lesssim T^{1-}\|u-v\|_{X^{s,b}_T}\max\left(\|u\|^{p-2}_{X^{s,b}_T}, \|v\|^{p-2}_{X^{s,b}_T}, \|u\|_{X^{s,b}_T}, \|v\|_{X^{s,b}_T}\right).
\end{align*}
Substitution back into \eqref{Forcing Term Contraction} completes the proof.

The last estimate is standard and can be found in (\cite{erdougan2016dispersive}, Lemma 4.6).
\end{proof}
\begin{remark}
Part 3 of Lemma \ref{Lemma: modified linear embeddings} shows that we must be careful in using our prior estimates with our forcing term. In particular, we have to use the fact that we're well above the Sobolev embedding level in order to avoid placing $f$ in $X^{1,1/2+}_T$.
\end{remark}

The next lemma we record is an a priori bound on the energy in order to establish the existence of an absorbing set in $H^1_x(\mathbb{T})$.
\begin{lemma}\label{Lemma: A priori Bound -- Absorbing Set}
Let $p \geq 5$ odd,  $\gamma > 0$, $u_0\in H^1_x(\mathbb{T})$, $f\in H^1_x(\mathbb{T})$ time independent, and $u$ the solution to
\begin{align*}
    \begin{cases}
        iu_t +\bigtriangleup u - |u|^{p-1}u +i\gamma u= f\\
        u(x,0) = u_0\in H^1(\mathbb{T}),
    \end{cases}
\end{align*}
emanating from $u_0$. Then $\|u\|_{L^2_x(\mathbb{T})}$ and $\|u\|_{\dot{H}^1_x(\mathbb{T})}$ are bounded \textit{a priori}. That is
\begin{align*}
    \limsup_{t\to\infty}\|u\|_{L^2_x(\mathbb{T})}&\leq C(\gamma, \|f\|_{L^2_x(\mathbb{T})}) \mbox{ and, }\\
    \limsup_{t\to\infty}\|u\|_{H^1_x(\mathbb{T})}&\leq C(\gamma, \|f\|_{H^1_x(\mathbb{T})}).
\end{align*}
\end{lemma}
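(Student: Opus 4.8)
The plan is to derive two differential inequalities --- one for the $L^2_x$ mass and one for the energy --- and then apply a Gr\"onwall-type argument to each. I would start by computing $\frac{d}{dt}\|u\|_{L^2_x}^2$ directly from the equation. Pairing \eqref{p-fdNLS} with $\bar{u}$, integrating over $\mathbb{T}$, and taking imaginary parts, the Laplacian and the defocusing nonlinearity both contribute real quantities that drop out, leaving
\[
\frac{d}{dt}\|u\|_{L^2_x}^2 = -2\gamma\|u\|_{L^2_x}^2 - 2\,\mathrm{Im}\int_\mathbb{T} f\bar{u}\,dx \leq -2\gamma\|u\|_{L^2_x}^2 + 2\|f\|_{L^2_x}\|u\|_{L^2_x}.
\]
A Young inequality to absorb $\|u\|_{L^2_x}$ into the dissipative term (say $2\|f\|_{L^2_x}\|u\|_{L^2_x} \leq \gamma\|u\|_{L^2_x}^2 + \gamma^{-1}\|f\|_{L^2_x}^2$), followed by Gr\"onwall, gives $\limsup_{t\to\infty}\|u\|_{L^2_x}^2 \leq \gamma^{-2}\|f\|_{L^2_x}^2$, which is the first claimed bound.

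Next I would turn to the energy. Write $E(u) = \frac12\|\partial_x u\|_{L^2_x}^2 + \frac{1}{p+1}\|u\|_{L^{p+1}_x}^{p+1}$ (defocusing, so both terms have good signs). Differentiating $E$ along the flow and using the equation, the conservative part vanishes as in the undamped case, and one is left with the damping and forcing contributions. The standard computation for weakly damped NLS yields something of the form
\[
\frac{d}{dt}E(u) = -2\gamma E(u) + \gamma\,\frac{p-1}{p+1}\|u\|_{L^{p+1}_x}^{p+1} - \mathrm{Re}\int_\mathbb{T}\left(\partial_x f\,\overline{\partial_x u} + \cdots\right)dx + \cdots,
\]
where the lower-order terms involve $f$ paired against $u$, $\partial_x u$, and $|u|^{p-1}u$. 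Here I would bound every $f$-term using Cauchy--Schwarz (and, for the $\int f\,\overline{|u|^{p-1}u}$ term, H\"older plus the Gagliardo--Nirenberg / Sobolev embedding $\|u\|_{L^\infty_x}\lesssim \|u\|_{H^1_x}$ valid on $\mathbb{T}$ in $d=1$) so that they are controlled by $\|f\|_{H^1_x}$ times powers of $\|u\|_{H^1_x}$; crucially the top-order piece $\int \partial_x f\,\overline{\partial_x u}$ is $\leq \|f\|_{H^1_x}\|\partial_x u\|_{L^2_x}$, linear in $\|\partial_x u\|_{L^2_x}$, so it can be absorbed into $-2\gamma E$ after a Young inequality. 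The bad term is $\gamma\frac{p-1}{p+1}\|u\|_{L^{p+1}_x}^{p+1}$, which is not dominated by $2\gamma E$ since $E$ only controls it with coefficient $\frac{1}{p+1}$; this is exactly where $p>3$ (super-cubic) makes the argument delicate and where I expect the main obstacle to lie.

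To handle that obstruction I would use the already-established $L^2_x$ a priori bound together with an interpolation/Gagliardo--Nirenberg estimate: on $\mathbb{T}$, $\|u\|_{L^{p+1}_x}^{p+1} \lesssim \|u\|_{L^2_x}^{\frac{p+3}{2}}\|\partial_x u\|_{L^2_x}^{\frac{p-1}{2}} + \|u\|_{L^2_x}^{p+1}$, so once $\|u\|_{L^2_x}$ is known to be eventually bounded by $C(\gamma,\|f\|_{L^2_x})$, the term $\gamma\frac{p-1}{p+1}\|u\|_{L^{p+1}_x}^{p+1}$ is controlled by $C\|\partial_x u\|_{L^2_x}^{(p-1)/2} + C$. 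Since $(p-1)/2 \geq 2 > $ the power with which $E$ controls $\|\partial_x u\|_{L^2_x}^2$... this does not close by pure absorption. The correct device --- the standard one for weakly damped defocusing NLS, e.g. in the references cited (Ghidaglia, Wang) --- is instead to exploit the sign of the nonlinear term more cleverly: one shows $\frac{d}{dt}E + \gamma E \leq C(\gamma,\|f\|_{H^1_x})$ by writing the nonlinear damping contribution as part of a modified Lyapunov functional, or by observing that $\frac{d}{dt}\int|u|^{p+1} = -\gamma(p+1)\int|u|^{p+1} + \text{(forcing)}$ provides its own decay, so the problematic term is itself eventually bounded a priori by $C(\gamma,\|f\|)$ via the same Gr\"onwall scheme applied to $\|u\|_{L^{p+1}_x}^{p+1}$ (using the $L^2$ and, bootstrapped, $L^{p+1}$ bounds). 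Feeding this back, $\frac{d}{dt}E \leq -2\gamma E + C(\gamma,\|f\|_{H^1_x})$, and Gr\"onwall gives $\limsup_{t\to\infty}E(u)\leq C(\gamma,\|f\|_{H^1_x})$; combined with the $L^2$ bound and $\|u\|_{H^1_x}^2 \sim \|u\|_{L^2_x}^2 + \|\partial_x u\|_{L^2_x}^2 \leq \|u\|_{L^2_x}^2 + 2E(u)$, this yields the second claimed bound. The main obstacle, to reiterate, is organizing the interplay between the $L^2$, $L^{p+1}$, and $\dot H^1$ a priori bounds so that the super-cubic nonlinear term is tamed --- for $p=3$ it is immediate, but for $p\geq 5$ one genuinely needs the staged bootstrap (mass first, then $L^{p+1}$ via its own evolution, then energy).
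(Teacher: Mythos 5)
Your $L^2$ argument is exactly the paper's: imaginary part of the pairing with $\bar u$, Young, Gr\"onwall. The problem is in the energy step, where you have a sign error that manufactures an obstruction that does not exist. For the defocusing equation the damping contribution to $\frac{d}{dt}E$ is $-\gamma\bigl(\|u_x\|_{L^2_x}^2+\|u\|_{L^{p+1}_x}^{p+1}\bigr)$: pairing the gradient of $E$ with the $-\gamma u$ part of $u_t$ gives \emph{both} terms with a full negative coefficient. Writing this as $-2\gamma E$ plus a remainder, the remainder is $-\gamma\frac{p-1}{p+1}\|u\|_{L^{p+1}_x}^{p+1}$, i.e.\ it comes with a favorable minus sign (your $+\gamma\frac{p-1}{p+1}\|u\|_{L^{p+1}_x}^{p+1}$ is what one would get in the \emph{focusing} case, where $E$ carries $-\frac{1}{p+1}\|u\|_{p+1}^{p+1}$). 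Since $1\ge \frac{2}{p+1}$, one simply has $\frac{d}{dt}E\le -2\gamma E + (\text{forcing terms})$ with no super-cubic difficulty at all; this is precisely the paper's computation. The forcing terms are then disposed of by Young: $|\int u_x\bar f_x|\le \|f\|_{H^1_x}\|u_x\|_{L^2_x}$ is absorbed into the $\|u_x\|^2$ part of the dissipation, and $|\int |u|^{p-1}u\,\bar f|\le \epsilon\|u\|_{L^{p+1}_x}^{p+1}+C_\epsilon\|f\|_{L^{p+1}_x}^{p+1}\lesssim \epsilon\|u\|_{L^{p+1}_x}^{p+1}+C_\epsilon\|f\|_{H^1_x}^{p+1}$ is absorbed into the $\|u\|_{p+1}^{p+1}$ part (this is why the dissipation of the full $L^{p+1}$ term, not just $\frac{1}{p+1}$ of it, matters). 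Gr\"onwall then gives $\limsup_t E\le C(\gamma,\|f\|_{H^1_x})$, and together with the $L^2$ bound the $H^1$ bound follows — no staged bootstrap, no Gagliardo--Nirenberg, no modified Lyapunov functional.

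Moreover, the patch you propose for the (spurious) bad term is itself incorrect: the identity $\frac{d}{dt}\int|u|^{p+1}=-\gamma(p+1)\int|u|^{p+1}+(\text{forcing})$ fails, because the dispersive part of the flow contributes $-(p+1)\Im\int |u|^{p-1}\bar u\,u_{xx}\,dx$, which neither vanishes nor has a sign, so $\|u\|_{L^{p+1}_x}^{p+1}$ does not satisfy its own damped Gr\"onwall inequality. So as written the second half of your proof does not close; once the sign of the nonlinear damping term is corrected, your overall scheme collapses to the paper's direct energy-dissipation argument.
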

\begin{proof}
By a standard limiting argument we assume that $u$ is smooth. We then first look at the $L^2_x(\mathbb{T})$ norm and differentiate with respect to time
\begin{align}
    \partial_t\frac{1}{2}\|u\|_{L^2(\mathbb{T})}^2 &= \frac{1}{2}\int_\mathbb{T}\overline{u}\left(iu_xx - i|u|^{p-1}u-\gamma u +if\right) + u\left(-i\overline{u}_xx+i|u|^{p-1}\overline{u}-\gamma \overline{u}-i\overline{f}\right)\,dx\nonumber\\
    &= -\gamma \|u\|_{L^2_x(\mathbb{T})}^2 + \int_\mathbb{T}\Im(u\overline{f})\,dx\label{L2 initial bound}
\end{align}
by standard conservation of mass for the \eqref{p-NLS}. Applying Young's inequality we get that
\begin{equation*}
\eqref{L2 initial bound}\leq -\frac{\gamma}{2} \|u\|_{L^2_x(\mathbb{T})}^2 + C(\gamma, \|f\|_{L^2_x(\mathbb{T})}),
\end{equation*}
and hence Gronwall's gives that $\limsup_{t\to\infty}\|u\|_{L^2_x(\mathbb{T})}\leq C(\gamma, \|f\|_{L^2_x(\mathbb{T})}).$

Similarly, we differentiate \eqref{energy} and simplify using conservation of energy for \eqref{p-NLS}. This leaves us with only terms involving $\gamma$ and the forcing term, $f$:
\begin{align}
\partial_t\bigg(\frac{1}{2}\|u_x&\|_{L^2_x(\mathbb{T})}^2 +\frac{1}{p+1}\|u\|_{L^{p+1}_x(\mathbb{T})}^{p+1}\bigg) = -\gamma\left( \|u_x\|_{L^2_x(\mathbb{T})}^2 + \|u\|_{L^{p+1}_x(\mathbb{T})}^{p+1}\right) - \frac{i}{2}\int_\mathbb{T} u_x\overline{f}_x-\overline{u}_xf_x\,dx\nonumber\\
&\,\,\,\qquad-\frac{i}{2}\int_\mathbb{T} u^{\frac{p+1}{2}}\overline{u}^{\frac{p+1}{2}-1}\overline{f}-\overline{u}^{\frac{p+1}{2}}u^{\frac{p+1}{2}-1}f\,dx\nonumber\\
&\leq -2\gamma \left(\frac{1}{2}\|u_x\|_{L^2_x(\mathbb{T})}^2 +\frac{1}{p+1}\|u\|_{L^{p+1}_x(\mathbb{T})}^{p+1}\right) + \int_\mathbb{T} \Im(u_x\overline{f})+\Im(u^{\frac{p+1}{2}}\overline{u}^{\frac{p+1}{2}-1}\overline{f})\,dx\label{ux initial bound}.
\end{align}
We again apply Young's to get
\begin{align*}
\eqref{ux initial bound}&\leq -\gamma \left(\frac{1}{2}\|u_x\|_{L^2_x(\mathbb{T})}^2 +\frac{1}{p+1}\|u\|_{L^{p+1}_x(\mathbb{T})}^{p+1}\right) + C(\gamma, \|f_x\|_{L^2_x(\mathbb{T})}) + C(\gamma, \|f\|_{L^{p+1}_x(\mathbb{T})}))\\
&\leq-\gamma \left(\frac{1}{2}\|u_x\|_{L^2_x(\mathbb{T})}^2 +\frac{1}{p+1}\|u\|_{L^{p+1}_x(\mathbb{T})}^{p+1}\right) + C(\gamma, \|f\|_{H^1_x(\mathbb{T})}),
\end{align*}
by Sobolev embedding. Gronwall's then gives that
\[
\limsup_{t\to\infty} \frac{1}{2}\|u_x\|_{L^2_x(\mathbb{T})}\leq \limsup_{t\to\infty}\left(\frac{1}{2}\|u_x\|_{L^2_x(\mathbb{T})}^2 +\frac{1}{p+1}\|u\|_{L^{p+1}_x(\mathbb{T})}^{p+1}\right)\leq C(\gamma, \|f\|_{H^1_x(\mathbb{T})}).
\]

Combining the $L^2_x(\mathbb{T})$ and the $\dot{H}^1_x(\mathbb{T})$ bounds we conclude that 
\[
\limsup_{t\to\infty}\|u\|_{H^1_x(\mathbb{T})}\leq C(\gamma, \|f\|_{H^1_x(\mathbb{T})}),
\]
as desired.
\end{proof}
\subsection{Reductions and Lemmas}

We can rewrite \eqref{p-fdNLS} using the transformation $L_t[u]$ from $\eqref{L definition}$ into
\begin{align}\label{v fdnls}
    \begin{cases}
        iv_t +\bigtriangleup v +i\gamma v- \mathcal{R}^2[v, \cdots, v]-\mathcal{NR}[v, \cdots, v]= F\\
        v(x,0) = u_0\in H^1_x(\mathbb{T}),
    \end{cases}
\end{align}
where 
\[
F(v, x):=\mathcal{F}^{-1}_x(L_t[v]\widehat{f}).
\]
Note that the forcing term is no longer time-independent, but depends in a nice enough way on the solution $v$. However, we retain the following corollary whose major contribution is the local well-posedness bound \eqref{Forced v Local Wellposedness}.

\begin{corollary}
Let $u_0\in H^1_x(\mathbb{T})$, $f\in H^1_x(\mathbb{T})$ time independent, and $F(v,x) = \mathcal{F}^{-1}_x(L_t[v]\widehat{f}).$ Then there is a
\[
T = T(\|u_0\|_{H^1_x(\mathbb{T})}, \|f\|_{H^{1}_x(\mathbb{T})}, \gamma)
\]
so that on $0 < t < T$ there is a unique solution to \eqref{v fdnls} in $X^{1, 1/2+}$ emanating from $u_0$ that satisfies the bound
\begin{equation}\label{Forced v Local Wellposedness}
\|v\|_{X^{1,1/2+}_T}\lesssim C(\|u_0\|_{H^1_x(\mathbb{T})}, \|f\|_{H^{1}_x(\mathbb{T})}, \gamma).
\end{equation}
\end{corollary}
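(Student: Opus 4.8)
The plan is to run a standard contraction mapping argument for the integral (Duhamel) formulation of \eqref{v fdnls}, with the only new feature being the self-dependence of the forcing term $F(v,x) = \mathcal{F}^{-1}_x(L_t[v]\widehat{f})$ on the solution $v$. First I would write the Duhamel form
\[
v(t) = \chi(t/T)W_t^\gamma u_0 \pm i\chi(t/T)\int_0^t W_{t-s}^\gamma\Big(\mathcal{R}^2[v,\cdots,v]+\mathcal{NR}[v,\cdots,v]+F(v,x)\Big)\,ds,
\]
and define the map $\Gamma$ by the right-hand side on the ball $\{v : \|v\|_{X^{1,1/2+}_T}\leq R\}$ for a suitable $R$ depending on $\|u_0\|_{H^1_x}$, $\|f\|_{H^1_x}$, and $\gamma$. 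By part 1 and part 5 of Lemma \ref{Lemma: modified linear embeddings} the linear term is bounded by $C_\gamma\|u_0\|_{H^1_x}$ and the Duhamel operator maps $X^{1,-1/2+}_T$ into $X^{1,1/2+}_T$ with constant depending only on $\gamma$; these play the role of the corresponding estimates in Lemma \ref{Lemma: v local well posedness}.

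Next I would estimate each nonlinear contribution in $X^{1,-1/2+}_T$. For $\mathcal{R}^2[v,\cdots,v]$ and $\mathcal{NR}[v,\cdots,v]$ the bounds are exactly the multilinear Strichartz estimates used to prove Lemma \ref{Lemma: v local well posedness} (at $s=1$, which is well above the threshold $\frac{p-5}{2(p-1)}$), giving $T^\delta$-gains and closing the smallness as usual. The genuinely new term is $F(v,x)$: here I would invoke part 2 of Lemma \ref{Lemma: modified linear embeddings}, which yields $\|F(v,x)\|_{X^{1,-1/2+}_T}\lesssim_\gamma T^\theta\|f\|_{H^1_x}$ with \emph{no} dependence on $\|v\|_{X^{1,1/2+}_T}$ in the power — the phase $L_t[v]$ is a unimodular multiplier in $k$ and the only cost is in the $H^b_t$ norm, which is absorbed by time localization. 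Collecting these, $\Gamma$ maps the ball of radius $R = 2C_\gamma\|u_0\|_{H^1_x} + 2C_{\gamma}\|f\|_{H^1_x}$ (say) into itself once $T$ is small relative to $R$ and $\gamma$.

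For the contraction property I would estimate $\Gamma[v_1] - \Gamma[v_2]$ term by term: the polynomial terms $\mathcal{R}^2, \mathcal{NR}$ give Lipschitz differences with a factor $T^\delta\langle R\rangle^{p-1}$ by multilinearity and the triangle inequality, while the forcing difference $F(v_1,x) - F(v_2,x) = \mathcal{F}^{-1}_x\big((L_t[v_1]-L_t[v_2])\widehat{f}\big)$ is handled precisely by part 4 of Lemma \ref{Lemma: modified linear embeddings}, which bounds it by $C_\gamma T^\theta\langle\|v_1\|_{X^{1,1/2+}_T}+\|v_2\|_{X^{1,1/2+}_T}\rangle^{p-2}\|v_1-v_2\|_{X^{1,1/2+}_T}\|f\|_{H^1_x}$. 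Shrinking $T$ makes the total Lipschitz constant less than $1$, so the Banach fixed point theorem produces the unique solution $v\in X^{1,1/2+}_T$, and evaluating the fixed point relation gives the a priori bound \eqref{Forced v Local Wellposedness} directly from the self-mapping estimate. I expect the only subtlety — not really an obstacle, since it is exactly what Lemma \ref{Lemma: modified linear embeddings} was set up to circumvent — is resisting the temptation to treat $F(v,x)$ via the $X^{s,b}$ bilinear machinery at regularity $1$ (which would force $f\in H^2_x$ through part 3); instead one must exploit that at the $X^{s,b-1}$ level only the cheap time-localization estimate is needed, keeping $f\in H^1_x$.
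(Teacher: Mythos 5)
Your proposal is correct and is essentially the argument the paper has in mind: the paper's proof is a one-line appeal to the standard contraction scheme (handled via Sobolev embedding at $s=1$ for the multilinear terms) together with Lemma \ref{Lemma: modified linear embeddings}, and you have simply spelled out that contraction, using parts 1, 2, 4, and 5 of that lemma exactly where they are needed — in particular part 2 for the self-mapping bound on $F(v,x)$ and part 4 for the Lipschitz difference, which is precisely how the self-dependence of the forcing term is meant to be absorbed.
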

\begin{proof}
This is standard and follows from Sobolev embedding and Lemma \ref{Lemma: modified linear embeddings}.
\end{proof}

Note that we will not be able to place $F$ into $H^{s+\varepsilon}_x$. This technicality necessitates an additional term in the change of variables \eqref{z definition}, which results in additional forcing terms.

We now recall $T$ from \eqref{defn of T} and perform the substitution 
\begin{equation}\label{Definition: z forced}
    v = W_t^\gamma u_0 - T[W_t^\gamma u_0, v, \cdots, v]+G+z,
\end{equation}
where 
\begin{align}
    (i\partial_t +\partial_x^2 +i\gamma)T[W_t^\gamma &u_0, v, \cdots, v] = -\widehat{\mathcal{HL}_B}[W_t^\gamma u_0, v \cdots, v]\\
&\qquad- \frac{p-1}{2}\widehat{T}[W_t^\gamma u_0, (i\partial_t+\bigtriangleup)v,v, \cdots, v]\nonumber\\
&\qquad+\frac{p-1}{2}\widehat{T}[W_t^\gamma u_0, v, (i\partial_t+\bigtriangleup)v,v, \cdots, v],\nonumber
\end{align}
by \eqref{Calculation: T in Equation}, and
\[
G(v,x) = \frac{F}{\bigtriangleup+i\gamma}.
\]

We find 
\[
(i\partial_t+\partial_x^2+i\gamma)G = \frac{p+1}{2}\|v\|_{L_x^{p-1}}^{p-1}G+F,
\]
and hence, analogously to \eqref{Equation}, $z$ satisfies (by substitution into the linear and high-frequency portion of $\mathcal{HL_B}$)
\begin{align}\label{Global Attractor Equation}
    \begin{cases}
        iz_t +\bigtriangleup z + i\gamma z -\mathcal{E}[v, \cdots, v]\\ \,\,\,+\mathcal{HL}_B[T[W_t^\gamma u_0, v, \cdots, v]- G -z , v, \cdots, v]\\ 
        \,\,\,\,\,\,+  \frac{p-1}{2}T[W_t^\gamma u_0, v, \left(\mathcal{R}^2+\mathcal{NR}\right)[v, \cdots, v]-i\gamma v+F,v \cdots, v]\\
\,\,\,\,\,\,\,\,\,- \frac{p-1}{2}T[ W_t^\gamma u_0, \left(\mathcal{R}^2+\mathcal{NR}\right)[v, \cdots, v]-i\gamma v + F,v, \cdots, v]= \frac{p+1}{2}\|v\|_{L^{p-1}_x}^{p-1} G\\
        z(x,0) = T[u_0, \cdots, u_0]-\frac{f}{\bigtriangleup+i\gamma}\in H^s_x(\mathbb{T}).
    \end{cases}
\end{align}

By the first bound of lemma \ref{Lemma: modified linear embeddings} we see every all the prior lemmas hold. We then restrict ourselves to estimating the other new terms from \eqref{Global Attractor Equation} below.
\begin{lemma}\label{Lemma: gamma and F in T}
Let $p\geq 5$ be odd and $0 < T\ll 1.$ Then there is some $\delta > 0$
\begin{align*}
\|T[W_t^\gamma u_0, F, v, \cdots, v]\|_{X^{1+\varepsilon, -1/2+}_T}&\lesssim_\delta T^\delta \|u_0\|_{H^1_x(\mathbb{T})}\|f\|_{H^{1}_x(\mathbb{T})}\|v\|_{X^{1,1/2+}_T}^{p-2}\\
\|\gamma T[W_t^\gamma u_0, v, \cdots, v]\|_{X^{1+\varepsilon, -1/2+}_T}&\lesssim_\delta T^\delta \gamma \|u_0\|_{H^1_x(\mathbb{T})}\|v\|^{p-1}_{X^{1,1/2+}_T}
\end{align*}
for $0 < \varepsilon < 1$. 
\end{lemma}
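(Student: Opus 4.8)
The plan is to exploit that the symbol of $T$ depends on the frequencies only through the spatial factor $1/\Phi$, so that $T[g_1,\dots,g_p]$ is an honest $p$-linear operator on \emph{spatial} functions applied pointwise in time; this lets me avoid the multilinear Strichartz machinery entirely and argue in physical space at fixed $t$. Writing $T=\tfrac{p+1}{2}T_1$ as in \eqref{defn of T}, the first slot carries the distinguished frequency $|k_1|\gg k_2^*$, so $k_1^*=|k_1|$ and $\langle k\rangle\lesssim\langle k_1\rangle$, while Case $B$ of Lemma \ref{General NLS Decomp} gives $|\Phi|\gtrsim k_1^*\sim\langle k_1\rangle$ on the region of summation. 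Hence $\langle k\rangle^{1+\varepsilon}/|\Phi|\lesssim\langle k_1\rangle^{\varepsilon}$ for every $\varepsilon\ge0$, and after bounding Fourier coefficients in absolute value, recognizing the resulting sum as a Fourier coefficient of a product, and applying H\"older in $x$ as $L^2_x\times L^\infty_x\times\cdots\times L^\infty_x$ together with the one-dimensional bound $\|\widehat g\|_{\ell^1_k}\lesssim\|g\|_{H^{1/2+}_x}$, I expect to obtain the pointwise-in-time estimate
\[
\|T[g_1,\dots,g_p](\cdot,t)\|_{H^{1+\varepsilon}_x}\lesssim\|g_1(\cdot,t)\|_{H^{\varepsilon}_x}\prod_{j=2}^{p}\|g_j(\cdot,t)\|_{H^{1/2+}_x},
\]
with $g_1$ occupying the distinguished slot.

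I would then specialize. For the first inequality take $g_1=W_t^\gamma u_0$, $g_2=F=\mathcal F^{-1}_x(L_t[v]\widehat f)$, and $g_3=\dots=g_p=v$; the position of $F$ among the non-distinguished slots is immaterial. The decisive observation is that $L_t[v]$ is unimodular and independent of $x$, so $\|F(\cdot,t)\|_{H^{1/2+}_x}=\|f\|_{H^{1/2+}_x}\le\|f\|_{H^1_x}$ for every $t$; further, $\|W_t^\gamma u_0\|_{H^{\varepsilon}_x}\lesssim_\gamma\|u_0\|_{H^{\varepsilon}_x}\le\|u_0\|_{H^1_x}$ since $\varepsilon<1$ and $|t|\le T\ll1$, and $\|v(\cdot,t)\|_{H^{1/2+}_x}\le\|v\|_{C^0_tH^1_x}\lesssim\|v\|_{X^{1,1/2+}_T}$ by the usual $X^{s,b}$ embedding. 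For the second inequality take $g_1=W_t^\gamma u_0$, $g_2=\dots=g_p=v$, and pull the constant $\gamma$ out front. In both cases the pointwise bound is uniform in $|t|\le T$, and to upgrade it to $X^{1+\varepsilon,-1/2+}_T$ I would choose an infimizing extension of $v$, insert a time cutoff $\eta(t/T)$, and use $X^{1+\varepsilon,-1/2+}\supset X^{1+\varepsilon,0}=L^2_tH^{1+\varepsilon}_x$ with $\|\eta(t/T)g\|_{L^2_t}\lesssim T^{1/2}\|g\|_{L^\infty_t}$; this produces the $T^\delta$ gain, with $\delta=\tfrac12$.

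I expect the only real obstacle to be dealing with $F$: it cannot be placed in a Bourgain space of positive temporal regularity at spatial regularity $1$, since Lemma \ref{Lemma: modified linear embeddings}(3) costs a derivative and would force $f\in H^2_x$, which is not available. The point of passing through the pointwise-in-time spatial operator is precisely that $F(\cdot,t)$ then enters only through $\|F(\cdot,t)\|_{H^{1/2+}_x}=\|f\|_{H^{1/2+}_x}$, so no temporal regularity of $F$ is ever needed; everything else is a soft consequence of $|\Phi|\gtrsim\langle k\rangle$ and the high--low frequency separation built into the definition of $T$. As a consistency check, the second estimate can also be obtained directly by the Cauchy--Schwarz reduction used for Case $B$ of Lemma \ref{Lemma: HH & R2 regularity}, which here collapses to $\sup_k\langle k\rangle^{2\varepsilon-2}<\infty$, valid exactly for $\varepsilon<1$.
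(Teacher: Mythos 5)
Your proposal is correct and follows essentially the paper's own route: use the Case $B$ restriction $|\Phi|\gtrsim k_1^*\gtrsim \langle k\rangle$ built into $T$ to cancel the derivative, estimate the resulting product pointwise in time via Sobolev embedding (so that $F$ enters only through $\|F(\cdot,t)\|_{H^s_x}=\|f\|_{H^s_x}$ by unimodularity of $L_t[v]$, sidestepping any temporal regularity for $F$), and gain $T^\delta$ by time localization into $X^{1+\varepsilon,0}_T\subset X^{1+\varepsilon,-1/2+}_T$. The only divergence is cosmetic: for the second bound the paper appeals to the multilinear Strichartz estimate (which would allow much lower regularity), whereas your pointwise Sobolev argument already suffices at the stated $H^1$ level.
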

\begin{proof}
Since $T$ comes with the restriction that $|\Phi|\gtrsim |k|$, the estimate
\begin{equation}\label{Equation: F in T bound}
\|T[W_t^\gamma u_0, F, v, \cdots v]\|_{X^{1+\varepsilon, -1/2+}_T}\lesssim_\delta T^\delta \|u_0\|_{H^1_x(\mathbb{T})}\|F\|_{C^0_tH^s_x}\|v\|_{X^{1,1/2+}_T}^{p-2}
\end{equation}
follows immediately from Sobolev embedding.

The result then follows by noting that $L_t$ preserves the $H^s$ norm:
\[
\eqref{Equation: F in T bound}\lesssim T^\delta \|u_0\|_{H^1_x(\mathbb{T})}\|f\|_{H^{1}_x(\mathbb{T})}\|v\|_{X^{1,1/2+}_T}^{p-2}.
\]
The second estimate follows similarly, but for much lower regularity by the multilinear Strichartz estimate \eqref{Multilinear Strichartz}.
\end{proof}
\begin{lemma}\label{Lemma: G Estimates}
Let $p\geq 5$ be odd and $0 < T\ll 1.$ Then there is some $\delta > 0$
\begin{align*}
\|\mathcal{HL}_B[G, v, \cdots, v]\|_{X^{1+\varepsilon, -1/2+}_T}&\lesssim_\delta T^\delta\|f\|_{H^{1}_x(\mathbb{T})}\|v\|_{X^{1,1/2+}_T}^{p-1}\\
\left\|\|v\|_{L^{p-1}_x}^{p-1}G\right\|_{X^{1+\varepsilon, -1/2+}_T}&\lesssim_\delta T^\delta\|f\|_{H^1_x}\|v\|^{p-1}_{X^{1,1/2+}_T}
\end{align*}
for $0 < \varepsilon < 1$. 
\end{lemma}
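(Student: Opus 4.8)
The plan is to treat both estimates by the same mechanism used throughout the paper: reduce to the multilinear Strichartz estimate \eqref{Multilinear Strichartz} after exploiting that $G = F/(\bigtriangleup + i\gamma)$ gains two full derivatives relative to $F$, and hence relative to $f$. First I would recall from part (3) of Lemma \ref{Lemma: modified linear embeddings} that $F(v,x) = \mathcal{F}^{-1}_x(L_t[v]\widehat f)$ satisfies $\|F\|_{X^{1, 1/2-}_T}\lesssim_\gamma \langle\|v\|_{X^{1,b}_T}\rangle^{p-1}\|f\|_{H^2_x}$; since $G$ is $F$ divided by a symbol of size $\langle k\rangle^2 + \gamma$, we immediately get the crucial gain
\[
\|G\|_{X^{3, 1/2-}_T} \lesssim_\gamma \langle \|v\|_{X^{1,b}_T}\rangle^{p-1}\|f\|_{H^2_x},
\]
and in particular $G$ lies comfortably in $X^{1+\varepsilon, 1/2-}_T$ for any $\varepsilon < 1$ (indeed for $\varepsilon$ up to $2$) as long as $f\in H^2_x$. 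Wait — here the hypothesis is $f\in H^1_x$, so I must instead write $G = F/(\bigtriangleup+i\gamma)$ and observe that the loss of one derivative in passing $f\mapsto F$ in part (3) is exactly compensated by one of the two derivatives gained from the elliptic symbol, leaving a net one-derivative gain: $\|G\|_{X^{2,1/2-}_T}\lesssim_\gamma \langle\|v\|_{X^{1,b}_T}\rangle^{p-1}\|f\|_{H^1_x}$, which still places $G$ in $X^{1+\varepsilon,1/2-}_T$ for $0<\varepsilon<1$.

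For the first estimate, $\mathcal{HL}_B[G,v,\dots,v]$, I would proceed exactly as in the proof of Lemma \ref{Lemma: HL[z] regularity}: $\mathcal{HL}_B$ is a $p$-linear operator with frequency restriction $|k_1|\gg k_2^*$, and by duality against $\eta\in X^{-1-\varepsilon, 1/2-}$ and a direct application of \eqref{Multilinear Strichartz} (placing the full $1+\varepsilon$ weight on the $G$ factor, which it can absorb by the gain above, and only the Sobolev-embedding-level regularity $\frac{p-5}{2(p-1)}+$ on each $v$ factor) one obtains
\[
\|\mathcal{HL}_B[G,v,\dots,v]\|_{X^{1+\varepsilon, -1/2+}_T}\lesssim \|G\|_{X^{1+\varepsilon, 1/2-}_T}\|v\|_{X^{\frac{p-5}{2(p-1)}+, 1/2+}_T}^{p-1}\lesssim_\delta T^\delta\|f\|_{H^1_x}\|v\|_{X^{1,1/2+}_T}^{p-1},
\]
where the $T^\delta$ comes from time localization and absorbing the $\langle\|v\|_{X^{1,b}_T}\rangle^{p-1}$ factor into the stated right-hand side using that $v$ is bounded on the local interval. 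For the second estimate, $\|v\|_{L^{p-1}_x}^{p-1}G$, I would note that $\|v\|_{L^{p-1}_x}^{p-1}(t) = \int_{\mathbb T}|v|^{p-1}dx$ is a purely time-dependent scalar, bounded by $\lesssim \|v\|_{X^{1,b}_T}^{p-1}$ via standard Strichartz, with good $L^2_t$ and $H^1_t$ norms on the localized interval by the same computations as in the proof of part (3) of Lemma \ref{Lemma: modified linear embeddings}; multiplying $G$ by this scalar-in-time costs at most a harmless power loss in $\langle\tau\rangle$ absorbed by interpolating between the $b-1$ and a slightly higher exponent, so $\|\,\|v\|_{L^{p-1}_x}^{p-1}G\|_{X^{1+\varepsilon,-1/2+}_T}\lesssim T^\delta \|v\|_{X^{1,b}_T}^{p-1}\|G\|_{X^{1+\varepsilon, -1/2+}_T}\lesssim_\delta T^\delta\|f\|_{H^1_x}\|v\|_{X^{1,1/2+}_T}^{p-1}$.

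The main obstacle I anticipate is bookkeeping the regularity of $G$ correctly: one must be careful that part (3) of Lemma \ref{Lemma: modified linear embeddings} forces a \emph{one-derivative} loss from $f$ to $F$ (this is the content of the remark after that lemma), so only $f\in H^1_x$ combined with the two-derivative smoothing of $(\bigtriangleup+i\gamma)^{-1}$ yields $G\in X^{2,1/2-}_T$ — just enough room for $\varepsilon<1$, with no slack. A secondary, more technical point is the time-frequency behaviour in the second estimate: multiplying by the time-dependent factor $\|v\|_{L^{p-1}_x}^{p-1}(t)$ is not literally bounded on $X^{s,b}$ for $b<1/2$, so one genuinely needs the $H^{1/2+}_t$ control of this factor (equivalently, to interpolate and use that $\int_{\mathbb T}|v|^{p-1}dx$ is differentiable in $t$ with an $L^2_t$-bounded derivative on the localized interval, which follows from the equation and Strichartz), exactly as in the estimates establishing part (3) above. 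Once these two points are handled, both bounds reduce to routine applications of \eqref{Multilinear Strichartz} and localization.
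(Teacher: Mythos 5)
Your argument is essentially correct, but it takes a genuinely different and heavier route than the paper. You bound $G$ in $X^{2,1/2-}_T$ by feeding $f\in H^1_x$ through part (3) of Lemma \ref{Lemma: modified linear embeddings} and then dividing by the elliptic symbol, and you then run the duality/multilinear-Strichartz scheme of Lemma \ref{Lemma: HL[z] regularity} with $G$ in the high-frequency slot. The paper instead exploits the $-1/2+$ temporal exponent directly: time localization gives $\|\cdot\|_{X^{1+\varepsilon,-1/2+}_T}\lesssim T^\delta\|\cdot\|_{X^{1+\varepsilon,0}_T}$, and at temporal exponent $0$ one only needs $G$ in $C^0_tH^{1+\varepsilon}_x$; since $L_t[v]$ is a unimodular function of $t$ alone, $\|G\|_{C^0_tH^{3}_x}\lesssim_\gamma\|f\|_{H^1_x}$ with \emph{no} loss and no $v$-dependence, and the remaining $v$ factors are handled by Sobolev embedding $H^1_x\subset L^\infty_x$, no multilinear Strichartz needed. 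This buys two things you lose: (i) the clean constant --- your $X^{s,1/2-}$ bound on $G$ carries the extra factor $\langle\|v\|_{X^{1,b}_T}\rangle^{p-1}$ from differentiating the phase, so strictly you prove a weaker inequality than the one stated (harmless in the application, where the local well-posedness bound is invoked anyway, but worth flagging); and (ii) range and simplicity --- the paper's argument works for $0<\varepsilon<2$, so your worry that $\varepsilon<1$ has ``no slack'' is an artifact of your route, as is the difficulty you raise in the second estimate about multiplying by the time-dependent scalar $\|v(t)\|_{L^{p-1}_x}^{p-1}$ on $X^{s,b}$ with $b<1/2$: after reducing to temporal exponent $0$ this is just multiplication of an $L^2_tH^{1+\varepsilon}_x$ function by a bounded function of $t$ (bounded via $H^1_x\subset L^{p-1}_x$), so no $H^{1/2+}_t$ control or interpolation is required. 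With the caveat about the extra $\langle\|v\|\rangle^{p-1}$ factor noted, your proof is workable.
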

\begin{proof}
We only prove the first estimate as the other follows similarly.

Since $G$ comes with two derivative savings and is in the high-frequency component, we may use time localization and Sobolev embedding to get
\begin{align*}
    \|\mathcal{HL}_B[G, v, \cdots, v]\|_{X^{1+\varepsilon, -1/2+}_T}\lesssim T^\delta \|\mathcal{HL}_B[G, v, \cdots, v]\|_{X^{1+\varepsilon,0}_T}\lesssim \|G\|_{C^0_tH^{1+\varepsilon}_x}\|v\|_{X^{1,1/2+}_T}^{p-1}.
\end{align*}
This is good for $0 < \varepsilon < 2$.

\end{proof}

We now prove, using the lemmas in the prior sections, that the new $v$ has non-linear smoothing.
\begin{lemma}\label{Lemma: Smoothing for forced and damped equation}
Let $p\geq 5$ odd, $0 < \varepsilon < \frac{p+3}{2(p-1)}$, $\gamma > 0$, $v$ the solution to \eqref{p-fdNLS} emanating from $u_0\in H^1_x(\mathbb{T})$ with lifespan $0<T\ll 1$, $F = \mathcal{F}_x^{-1}(L_t[v]\widehat{f})$ for $f\in H^{1}_x(\mathbb{T})$ time independent. Then there is a $\tilde{T} = \tilde{T}(\varepsilon, \gamma, \|f\|_{H^1_x(\mathbb{T})}, \|u_0\|_{H^1_x(\mathbb{T})})$ so that for $0 < t < \min(\tilde{T}, T)$ the following estimate holds
\[
\|v - W_t^\gamma u_0\|_{H^{1+\varepsilon}}\leq C(\varepsilon, \gamma, \|f\|_{H^{1}_x(\mathbb{T})}, \|u_0\|_{H^1_x(\mathbb{T})}).
\]
Moreover, there is a $\tilde{C}= \tilde{C}(\varepsilon, \gamma, \|f\|_{H^{1}_x(\mathbb{T})}, \|u_0\|_{H^1_x(\mathbb{T})})$ so that for all time $t>0$ the bound
\begin{equation}\label{Equation: Forced Smoothing Global Bound}
\|v - W_t^\gamma u_0\|_{H_x^{1+\varepsilon}}\leq \tilde{C}
\end{equation}
holds.
\end{lemma}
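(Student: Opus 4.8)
The plan is to run essentially the same fixed-point scheme as in the proof of Theorem \ref{Theorem 1: Smoothing}, now applied to equation \eqref{Global Attractor Equation} for $z$, and then to bootstrap the local-in-time smoothing bound to a global-in-time bound using the absorbing-set information from Lemma \ref{Lemma: A priori Bound -- Absorbing Set}. First I would fix $0 < \varepsilon < \frac{p+3}{2(p-1)}$ (note that at $s=1$ the three quantities in Theorem \ref{Theorem 1: Smoothing} become $\frac{p+1}{2}>1$, $1 - \frac{p-5}{2(p-1)} = \frac{p+3}{2(p-1)}$, and $1$, so the binding constraint is $\varepsilon < \frac{p+3}{2(p-1)}$, which for $p\geq 5$ still allows $\varepsilon$ up to roughly $\frac{2}{3}$ at $p=5$ and forces $\varepsilon<1$ for larger $p$; in either case $\varepsilon<1$ so Lemmas \ref{Lemma: HL[T] Regularity}, \ref{Lemma: T composite in XSB Regularity}, \ref{Lemma: gamma and F in T}, \ref{Lemma: G Estimates} all apply). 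I would write the Duhamel formulation of \eqref{Global Attractor Equation} using the modified propagator $W_t^\gamma$, localize in time with $\chi(t/\tilde T)$, and estimate the right-hand side in $X^{1+\varepsilon, 1/2+}_{\tilde T}$ term by term: the initial data $T[u_0,\dots,u_0] - \frac{f}{\triangle + i\gamma}$ lands in $H^{1+\varepsilon}_x$ by Lemma \ref{Lemma: Normal Form Regularity} and elliptic regularity for $f\in H^1_x$; the genuinely nonlinear pieces ($\mathcal E$, $\mathcal{HL}_B$ of $T$, the composite $T$-terms) are handled by Lemmas \ref{Lemma: HH & R2 regularity}, \ref{Lemma: HL[T] Regularity}, \ref{Lemma: HL[z] regularity}, \ref{Lemma: T composite in XSB Regularity}; and the new forcing/damping pieces ($\mathcal{HL}_B[G,\cdot]$, $\frac{p-1}{2}T[\cdots, F, \cdots]$, $\gamma T[\cdots]$, and $\|v\|_{L^{p-1}_x}^{p-1}G$) are handled by Lemmas \ref{Lemma: gamma and F in T} and \ref{Lemma: G Estimates}, together with part 5 of Lemma \ref{Lemma: modified linear embeddings} for the Duhamel integral. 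Each estimate carries a positive power $\tilde T^\delta$, so for $\tilde T$ small depending on $\|u_0\|_{H^1_x}$, $\|f\|_{H^1_x}$, $\gamma$ (via the local bound \eqref{Forced v Local Wellposedness} on $\|v\|_{X^{1,1/2+}_T}$), the map is a contraction on a ball of $X^{1+\varepsilon,1/2+}_{\tilde T}$, yielding $\|z\|_{X^{1+\varepsilon,1/2+}_{\tilde T}} \leq C(\varepsilon,\gamma,\|f\|_{H^1_x},\|u_0\|_{H^1_x})$. Undoing the substitution \eqref{Definition: z forced} and bounding $T[W_t^\gamma u_0,v,\dots,v]$ (Lemma \ref{Lemma: HL[T] Regularity}-type bound, or directly Lemma \ref{Lemma: Normal Form Regularity} applied with $W_t^\gamma u_0$ using part 1 of Lemma \ref{Lemma: modified linear embeddings}) and $G = F/(\triangle + i\gamma)$ in $C^0_t H^{1+\varepsilon}_x$ via elliptic regularity, the triangle inequality gives the first (local-in-time) assertion $\|v - W_t^\gamma u_0\|_{H^{1+\varepsilon}_x}\leq C$ for $0<t<\min(\tilde T, T)$.

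For the global-in-time bound \eqref{Equation: Forced Smoothing Global Bound}, the key point is that all the constants above depend on the solution only through $\|u_0\|_{H^1_x}$ — the size of the $H^1_x$ data at the start of the time window — and Lemma \ref{Lemma: A priori Bound -- Absorbing Set} tells us $\limsup_{t\to\infty}\|v(t)\|_{H^1_x}$ is controlled by $C(\gamma,\|f\|_{H^1_x})$; in particular, once the trajectory has entered the $H^1_x$-absorbing set it stays in a fixed ball $B_R \subset H^1_x$ with $R = R(\gamma,\|f\|_{H^1_x})$ for all later times. (Here I am implicitly using that the solution of \eqref{p-fdNLS} exists globally in $H^1_x$, which follows because the $H^1_x$ norm cannot blow up in finite time given the a priori bound; this is the global existence part of Corollary \ref{Corollary: global Attractor} and I would either invoke it or prove it inline by iterating the local theory with the uniform $H^1_x$ bound.) Thus for each $t_0$ large enough we may re-center: treat $v(t_0)$ as initial data of size $\leq R$, apply the local smoothing bound on $[t_0, t_0 + \tilde T]$ with $\tilde T = \tilde T(\varepsilon,\gamma,\|f\|_{H^1_x}, R)$ now \emph{uniform} in $t_0$, and get $\|v(t) - W_{t-t_0}^\gamma v(t_0)\|_{H^{1+\varepsilon}_x}\leq C(\varepsilon,\gamma,\|f\|_{H^1_x}, R)$. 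Finally, since $W^\gamma_\tau$ contracts by $e^{-\gamma\tau}$, we have $\|W_{t-t_0}^\gamma v(t_0)\|_{H^{1+\varepsilon}_x}$ bounded in terms of $\|v(t_0)\|_{H^{1+\varepsilon}_x}$; iterating the re-centering over successive windows of length $\tilde T$ and exploiting the $e^{-\gamma \tilde T}$ decay of the linear part against the uniformly bounded Duhamel contribution closes a Gronwall/geometric-series argument showing $\limsup_{t\to\infty}\|v(t) - W_t^\gamma u_0\|_{H^{1+\varepsilon}_x}$, and hence $\|v(t) - W_t^\gamma u_0\|_{H^{1+\varepsilon}_x}$ for all $t>0$, is bounded by a constant $\tilde C(\varepsilon,\gamma,\|f\|_{H^1_x},\|u_0\|_{H^1_x})$ (the dependence on $\|u_0\|_{H^1_x}$ surviving only to cover the transient time before the trajectory reaches the absorbing set).

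The main obstacle I anticipate is the bookkeeping for the \emph{uniformity} in $t_0$ of the local smoothing window and constant: one must check that the local existence time $T$ for $v$ in \eqref{Forced v Local Wellposedness} and the smaller smoothing time $\tilde T$ depend on the data only through its $H^1_x$-norm (not any higher norm), so that the a priori $H^1_x$-bound really does furnish a single window length good for all late times. A secondary subtlety is the time-dependent forcing $F(v,x) = \mathcal F_x^{-1}(L_t[v]\widehat f)$: because $L_t[v]$ depends on $\|v\|_{L^{p-1}_x}^{p-1}$, any estimate on $F$ or $G$ must be phrased carefully so as not to require $f\in H^{1+\varepsilon}_x$ — as flagged in the remark after Lemma \ref{Lemma: modified linear embeddings}, one leans on being above the Sobolev-embedding threshold ($s=1 > 1/2$) exactly to keep $f$ in $H^1_x$. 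Both difficulties are essentially the content of Lemmas \ref{Lemma: gamma and F in T}–\ref{Lemma: G Estimates} combined with the structure of the normal-form operator $T$ carrying the gain $|\Phi|\gtrsim|k|$, so once those are in place the argument is a matter of assembling the pieces and iterating.
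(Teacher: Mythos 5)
Your proposal is correct and follows essentially the same route as the paper: the identical Duhamel estimate for $z$ solving \eqref{Global Attractor Equation} in $X^{1+\varepsilon,1/2+}_{\tilde T}$ via Lemmas \ref{Lemma: Normal Form Regularity}--\ref{Lemma: G Estimates} and the local bound \eqref{Forced v Local Wellposedness}, undoing the substitution \eqref{Definition: z forced} by the triangle inequality, and then globalizing by re-centering on windows of length $\tilde T$ inside the uniform $H^1_x$ ball and summing the geometric series produced by the $e^{-\gamma \tilde T}$ decay of $W^\gamma_t$, which is exactly the paper's telescoping argument. One harmless slip: at $p=5$ the threshold $\frac{p+3}{2(p-1)}$ equals $1$ (not roughly $\frac{2}{3}$), but your operative conclusion $\varepsilon<1$ is what the lemmas require, so nothing in the argument is affected.
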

    
\begin{proof}
Similar to the smoothing proof for the unforced and undamped equation, we let
\begin{align}
    &\mathfrak{F}(v) = \mathcal{E}[v, \cdots, v]- \mathcal{HL}_B[T[W_t^\gamma u_0, v, \cdots, v]-G(v,x),v,\cdots, v] \nonumber\\
    &\quad- \frac{p-1}{2}T[W_t^\gamma u_0, v, \left(\mathcal{R}^2+\mathcal{NR}\right)[v, \cdots, v],v, \cdots, v]\nonumber\\
&\quad+ \frac{p-1}{2}T[ W_t^\gamma u_0, \left(\mathcal{R}^2+\mathcal{NR}\right)[v, \cdots, v],v, \cdots, v]\nonumber\\
&\quad + \frac{p-1}{2}T[W_t^\gamma u_0, F(v, x), v, \cdots, v]\nonumber\\
&\quad- \frac{p-1}{2}T[W_t^\gamma u_0, v, F(v, x), v, \cdots, v]\nonumber\\
&\quad -\frac{p+1}{2}\|v\|_{L^{p-1}_x}^{p-1}G(v,x) + i(p-1)\gamma T[W_t^\gamma u_0, v, \cdots, v]\nonumber.
\end{align}
Recalling \eqref{Global Attractor Equation}, it follows by Duhamel that $z$ satisfies
\begin{align*}
z &= \chi(t/\tilde{T})\left(W_t^\gamma T[u_0, \cdots, u_0]-\frac{f}{\bigtriangleup+i\gamma}\right)\\
&\qquad-i\chi(t/\tilde{T})\int_0^tW_{t-s}^\gamma\left(\mathfrak{F}(v) + \mathcal{HL}_B[z, v, \cdots, v]\right)\,ds.
\end{align*}
Applying Lemmas \ref{Lemma: Normal Form Regularity}, \ref{Lemma: HH & R2 regularity}, \ref{Lemma: HL[T] Regularity}, \ref{Lemma: HL[z] regularity}, \ref{Lemma: T composite in XSB Regularity}, \ref{Lemma: gamma and F in T}, and \ref{Lemma: G Estimates} immediately gives
\begin{align*}
    \|z\|_{X^{1+\varepsilon, 1/2+}_T} &\lesssim_{\varepsilon,\gamma} \|u_0\|_{H^{1}_x}^p + \tilde{T}^\delta(\|v\|_{X^{1,1/2+}_T}^p + \|v\|_{X^{1, 1/2+}_T}^{2p-1} + \|u_0\|_{H^s_x}\|v\|_{X^{1, 1/2+}_T}^{2p-2})\\
    &+\tilde{T}^\delta \|u_0\|_{H^1_x}\|f\|_{H^{1}_x}\|v\|^{p-2}_{X^{1,1/2+}_T}\\
    &+ \tilde{T}^\delta(\|v\|^{p-1}_{X^{1,1/2+}_T}+\gamma)\|f\|_{H^1_x}+\tilde{T}^\delta \|z\|_{X^{1+\varepsilon, 1/2+}_T}\|v\|^{2p-2}_{X^{1,1/2+}_T}.
\end{align*}
Applying the local well-posedness bound \eqref{Forced v Local Wellposedness}, we find
\begin{align}
    \|z\|_{X^{1+\varepsilon, 1/2+}_T}&\leq \tilde{T}^\delta C_1(\|u_0\|_{H^1_x(\mathbb{T})}, \|f\|_{H_x^{1}(\mathbb{T})}, \gamma, \varepsilon)\label{Equation: Global Attractor z bound line}\\
    &\qquad+ \tilde{T}^\delta \|z\|_{X^{1+\varepsilon, 1/2+}_T}C_2(\|u_0\|_{H^1_x(\mathbb{T})}, \|f\|_{H^{1}_x(\mathbb{T})}, \gamma, \varepsilon)\label{Equation: Global Attractor T on Z line}.
\end{align}
Taking $\tilde{T} = \tilde{T}(T, C_2) = \tilde{T}(\|u_0\|_{H^1_x(\mathbb{T})}, \|f\|_{H^{1}_x(\mathbb{T})}, \gamma, \varepsilon)$ small enough allows us to subtract line \eqref{Equation: Global Attractor T on Z line} from line \eqref{Equation: Global Attractor z bound line} and obtain the bound
\[
\|z\|_{X^{1+\varepsilon, 1/2+}_T}\leq C(\|u_0\|_{H^1_x(\mathbb{T})}, \|f\|_{H^{1}_x(\mathbb{T})}, \gamma, \varepsilon).
\]

We now recall the definition of $z$ on line \eqref{Definition: z forced}, and use the triangle inequality with Lemma \ref{Lemma: Normal Form Regularity} to conclude the desired bound
\[
\|v - W_t^\gamma u_0\|_{C^0_tH^{1+\varepsilon}_x}\lesssim\|v - W_t^\gamma u_0\|_{X^{1+\varepsilon, 1/2+}_T}\leq C(\|u_0\|_{H^1_x(\mathbb{T})}, \|f\|_{H^{1}_x(\mathbb{T})}, \gamma, \varepsilon),
\]
for $0 < \varepsilon < \frac{p+3}{2(p-1)}$.

All that remains is to prove that this bound extends globally. 
By line \eqref{L definition} and Lemma \ref{Lemma: A priori Bound -- Absorbing Set} we know that for all time $t > 0$, the bound
\[
\|v(t)\|_{H^{1}_x(\mathbb{T})} = \|u(t)\|_{H^{1}_x(\mathbb{T})}\leq C(\|u_0\|_{H^1_x(\mathbb{T})}, \|f\|_{H^{1}_x(\mathbb{T})}, \gamma)
\]
holds and that $u$ exists globally. From this, it follows that for the local non-linear smoothing time $\tilde{T}$, $n\in\mathbb{N}$, and $t\in [n\tilde{T}, (n+1)\tilde{T}]$ that
\begin{align*}
\left\|v(t) - W_{t-n\tilde{T}}^\gamma v(n\tilde{T})\right\|_{H^{1+\varepsilon}_x(\mathbb{T})}&\lesssim C(\|v(n\tilde{T})\|_{H^1_x(\mathbb{T})}, \|f\|_{H^{1}_x(\mathbb{T})}, \gamma, \varepsilon)\\
&\leq C(\|u_0\|_{H^1_x(\mathbb{T})}, \|f\|_{H^{1}_x(\mathbb{T})}, \gamma, \varepsilon).
\end{align*}
Then for $J\in\mathbb{N}$ and $t = J\tilde{T}$ we have
\begin{align*}
    \big\|v(J\tilde{T}) - W_t^\gamma u_0\big\|_{H^{1+\varepsilon}_x(\mathbb{T})}&\leq \sum_{j=1}^J \left\|W_{(J-j)\tilde{T}}^\gamma v(j\tilde{T}) - W_{(J-j+1)\tilde{T}}^\gamma v((j-1)\tilde{T})\right\|_{H_x^{1+\varepsilon}(\mathbb{T})}\\
    &\leq\sum_{j=1}^J e^{-(J-j)\tilde{T}\gamma}\left\|v(j\tilde{T}) - W^\gamma_{\tilde{T}}v((j-1)\tilde{T})\right\|_{H_x^{1+\varepsilon}(\mathbb{T})}\\
    &\lesssim C\sum_{j=0}^\infty e^{-j\tilde{T}\gamma}\\
    &= \tilde{C}(\|u_0\|_{H^1_x(\mathbb{T})}, \|f\|_{H^{1}_x(\mathbb{T})}, \gamma, \varepsilon),
\end{align*}
establishing \eqref{Equation: Forced Smoothing Global Bound}.
\end{proof}
All that remains is to prove Corollary \ref{Corollary: global Attractor}. To do so, we will follow a general strategy used to prove such statements in the presence of smoothing.
\begin{proof}[Proof of Corollary \ref{Corollary: global Attractor}]
Now, let $0 < \varepsilon < \frac{p+3}{2(p-1)},$ $S_t$ be the (global) data-to-solution map associated to \eqref{p-fdNLS} at time $t$, $L_t^{-1}[v]$ denote the Fourier multiplier\footnote{This is abuse of notation, but the benefits of this far outweigh any potential confusion that may arise.} with factor $\frac{1}{L_t[v]}$ and similarly for $L_t[v]$, $B$ the absorbing set (guaranteed by Lemma \ref{Lemma: A priori Bound -- Absorbing Set}), $u_0\in B$, and $N_t$ defined by the relation
\[
S_tu_0 = L_{t}^{-1}[S_tu_0]W^\gamma_tu_0+ N_tu_0.
\]
It's immediate that, uniformly on $B$, \begin{equation}\label{uniformDecay}
    \|L_{t}^{-1}[S_tu_0]W^\gamma_tu_0\|_{H^1_x(\mathbb{T})} = \|W^\gamma_t u_0\|_{H^1_x(\mathbb{T})} \lesssim e^{-\gamma t}\to 0.
\end{equation} 
Furthermore, by Lemma \ref{Lemma: Smoothing for forced and damped equation} we see that, on $B$, 
\begin{align*}
    \|N_tu_0\|_{H^{1+\varepsilon}(\mathbb{T})} & = \left\|\left(L_{t}[S_tu_0]S_t - W_t^\gamma\right) u_0\right\|_{H_x^{1+\varepsilon}(\mathbb{T})}\\
    &\leq \tilde{C}(\|f\|_{H^1_x(\mathbb{T})}, \gamma, \varepsilon),
\end{align*}
and thus by Rellich's Theorem, $\{N_tu_0\,:\,t > 0\}$ is pre-compact in $H^1_x(\mathbb{T})$. It follows that $S_t$ is asymptotically compact, ensuring the existence of a global attractor, A (For more information, see \cite{temam2012infinite}).

To prove the final claim we let $0 < \varepsilon < \frac{p+3}{2(p-1)}$, note that \[A = \omega(B) = \bigcap_{\tau} \overline{\bigcup_{t > \tau} S_t B}:= \bigcap_\tau U_\tau,\] and define $B_\varepsilon$ to be the ball of radius $C(\|f\|_{H^1_x(\mathbb{T})}, \gamma, \varepsilon)$ in $H_x^{1+\varepsilon/2+\frac{p+3}{4(p-1)}}$. By our proof of the existence of an absorbing set, there is a time $T = T(\gamma, \|u_0\|_{H^1_x(\mathbb{T})}, \|f\|_{H^1_x(\mathbb{T})})$ so that for $t \geq T$,
\begin{equation}\label{Equation: Solution Decay for Showing Compact}
\|S_t u_0\|_{H^1_x(\mathbb{T})}\leq C(\gamma, \|f\|_{H^1_x(\mathbb{T})}).
\end{equation}
Now, by Lemma \ref{Lemma: Smoothing for forced and damped equation}, the prior sentence, and translation in time, we assume that $S_tu_0$ is in the absorbing set for all $t > 0$ and satisfies \eqref{Equation: Solution Decay for Showing Compact}. Hence, by \eqref{uniformDecay} we find for $\tau > T$ that $U_\tau \subset B_\varepsilon + B(\delta_\tau, 0)$ in $H^1_x(\mathbb{T})$, where $\delta_\tau \to 0$ as $\tau\to \infty$. Since $B_\varepsilon$ is compact in $H^1_x(\mathbb{T})$, it follows that 
\[
A = \bigcap_\tau U_\tau\subset \bigcap_\tau \left(B_\varepsilon+B(\delta_\tau,0)\right) = B_\varepsilon,
\]
a compact set in $H^{1+\varepsilon}_x(\mathbb{T})$ by Rellich's theorem.
\end{proof}
\section{Acknowledgements}
The author would like to thank Professor Burak Erdo{\u{g}}an for bringing the connection between smoothing and global attractors to my attention, and for his endless patience.
\bibliographystyle{acm}
\bibliography{Waveguide, GeneralDispersivePDE, NLS, gKdV, Attractor}

\begin{thebibliography}{10}

\bibitem{akroune1999regularity}
{\sc Akroune, N.}
\newblock Regularity of the attractor for a weakly damped nonlinear
  {S}chr{\"o}dinger equation on $\mathbb{R}$.
\newblock {\em Applied Mathematics Letters 12}, 3 (1999), 45--48.

\bibitem{bourgain1993fourier}
{\sc Bourgain, J.}
\newblock Fourier transform restriction phenomena for certain lattice subsets
  and applications to nonlinear evolution equations.
\newblock {\em Geometric \& Functional Analysis GAFA 3}, 3 (1993), 209--262.

\bibitem{bourgain1998refinements}
{\sc Bourgain, J.}
\newblock Refinements of {S}trichartz inequality and applications to 2{D}-{NLS}
  with critical nonlinearity.
\newblock {\em International Mathematics Research Notices 1998}, 5 (1998),
  253--283.

\bibitem{bourgain2004remark}
{\sc Bourgain, J.}
\newblock A remark on normal forms and the “{I}-method” for periodic {NLS}.
\newblock {\em Journal d’Analyse Mathematique 94}, 1 (2004), 125--157.

\bibitem{cazenave1989some}
{\sc Cazenave, T., and Weissler, F.~B.}
\newblock {S}ome remarks on the nonlinear {S}chr{\"o}dinger equation in the
  critical case.
\newblock In {\em Nonlinear semigroups, partial differential equations and
  attractors}. Springer, 1989, pp.~18--29.

\bibitem{cazenave1990cauchy}
{\sc Cazenave, T., and Weissler, F.~B.}
\newblock The {C}auchy problem for the critical nonlinear {S}chr{\"o}dinger
  equation in ${H}^s$.
\newblock {\em Nonlinear Analysis: Theory, Methods \& Applications 14}, 10
  (1990), 807--836.

\bibitem{correia2020nonlinear}
{\sc Correia, S., and Silva, J.}
\newblock Nonlinear smoothing for dispersive {PDE}: a unified approach.
\newblock {\em Journal of Differential Equations\/} (2020).

\bibitem{de2007global}
{\sc {De Silva}, D., Pavlovi{\'c}, N., Staffilani, G., and Tzirakis, N.}
\newblock Global well-posedness for a periodic nonlinear {S}chr{\"o}dinger
  equation in 1{D} and 2{D}.
\newblock {\em Discrete and Continuous Dynamical Systems 19}, 1 (2007), 37--65.

\bibitem{dodson2016global}
{\sc Dodson, B.}
\newblock {G}lobal well-posedness and scattering for the defocusing, l
  2-critical, nonlinear {S}chr{\"o}dinger equation when d= 1.
\newblock {\em American Journal of Mathematics 138}, 2 (2016), 531--569.

\bibitem{erdougan2013global}
{\sc Erdo{\u{g}}an, M., and Tzirakis, N.}
\newblock Global smoothing for the periodic {K}d{V} evolution.
\newblock {\em International Mathematics Research Notices 2013}, 20 (2013),
  4589--4614.

\bibitem{erdougan2016dispersive}
{\sc Erdo{\u{g}}an, M., and Tzirakis, N.}
\newblock {\em Dispersive partial differential equations: wellposedness and
  applications}, vol.~86.
\newblock Cambridge University Press, 2016.

\bibitem{erdogan2017smoothing}
{\sc Erdo{\u{g}}an, M.~B., G{\"u}rel, T.~B., and Tzirakis, N.}
\newblock Smoothing for the fractional {S}chr{\"o}dinger equation on the torus
  and the real line.
\newblock {\em Indiana Univ. Math. J. 68\/} (2019), 369--392.

\bibitem{erdogan2011long}
{\sc Erdo{\u{g}}an, M.~B., and Tzirakis, N.}
\newblock Long time dynamics for forced and weakly damped {K}d{V} on the torus.
\newblock {\em Communications on Pure \& Applied Analysis 12}, 6 (2013).

\bibitem{erdougan2013smoothing}
{\sc Erdo{\u{g}}an, M.~B., and Tzirakis, N.}
\newblock Smoothing and global attractors for the {Z}akharov system on the
  torus.
\newblock {\em Analysis \& PDE 6}, 3 (2013), 723--750.

\bibitem{erdogan2013talbot}
{\sc Erdo{\u{g}}an, M.~B., and Tzirakis, N.}
\newblock Talbot effect for the cubic non-linear {S}chr{\"o}edinger equation on
  the torus.
\newblock {\em Mathematical Research Letters 20}, 6 (2013), 1081--1090.

\bibitem{fan2017log}
{\sc Fan, C.}
\newblock {L}og--log blow up solutions blow up at exactly m points.
\newblock In {\em Annales de l'Institut Henri Poincare (C) Non Linear
  Analysis\/} (2017), vol.~34, Elsevier, pp.~1429--1482.

\bibitem{ghidaglia1988finite}
{\sc Ghidaglia, J.-M.}
\newblock Finite dimensional behavior for weakly damped driven
  {S}chr{\"o}dinger equations.
\newblock In {\em Annales de l'Institut Henri Poincare (C) Non Linear
  Analysis\/} (1988), vol.~5, Elsevier, pp.~365--405.

\bibitem{goubet1996regularity}
{\sc Goubet, O.}
\newblock Regularity of the attractor for a weakly damped nonlinear
  {S}chr{\"o}dinger equation.
\newblock {\em Applicable Analysis 60}, 1-2 (1996), 99--119.

\bibitem{goubet2000asymptotic}
{\sc Goubet, O.}
\newblock Asymptotic smoothing effect for weakly damped forced {K}orteweg-de
  {V}ries equations.
\newblock {\em Discrete \& Continuous Dynamical Systems-A 6}, 3 (2000), 625.

\bibitem{goubet2017finite}
{\sc Goubet, O., and Zahrouni, E.}
\newblock Finite dimensional global attractor for a fractional nonlinear
  {S}chr{\"o}dinger equation.
\newblock {\em Nonlinear Differential Equations and Applications NoDEA 24}, 5
  (2017), 1--16.

\bibitem{goubet2020global}
{\sc Goubet, O., and Zahrouni, E.}
\newblock Global attractor for damped forced nonlinear logarithmic
  schr{\"o}dinger equations.
\newblock {\em Discrete \& Continuous Dynamical Systems-S\/} (2020).

\bibitem{isom2020growth}
{\sc Isom, B., Mantzavinos, D., and Stefanov, A.}
\newblock Growth bound and nonlinear smoothing for the periodic derivative
  nonlinear {S}chr{\"o}dinger equation.
\newblock {\em arXiv preprint arXiv:2012.09933\/} (2020).

\bibitem{kappeler2017scattering}
{\sc Kappeler, T., Schaad, B., and Topalov, P.}
\newblock Scattering-like phenomena of the periodic defocusing {NLS} equation.
\newblock {\em Mathematical Research Letters 24}, 3 (2017), 803--826.

\bibitem{keraani2009smoothing}
{\sc Keraani, S., and Vargas, A.}
\newblock A smoothing property for the ${L}^2$-critical {NLS} equations and an
  application to blowup theory.
\newblock {\em Annales de l'Institut Henri Poincar{\'e} (C) Non Linear Analysis
  26}, 3 (2009), 745--762.

\bibitem{kishimoto2014remark}
{\sc Kishimoto, N.}
\newblock {R}emark on the periodic mass critical nonlinear {S}chr{\"o}dinger
  equation.
\newblock {\em Proceedings of the American Mathematical Society 142}, 8 (2014),
  2649--2660.

\bibitem{li2011global}
{\sc Li, Y., Wu, Y., and Xu, G.}
\newblock Global well-posedness for the mass-critical nonlinear
  {S}chr{\"o}dinger equation on $\mathbb{T}$.
\newblock {\em Journal of Differential Equations 250}, 6 (2011), 2715--2736.

\bibitem{mcconnell2021global}
{\sc McConnell, R.}
\newblock {G}lobal attractor for the periodic generalized {K}orteweg-de {V}ries
  equation through smoothing.
\newblock {\em arXiv preprint arXiv:2105.13405\/} (2021).

\bibitem{molinet2009global}
{\sc Molinet, L.}
\newblock {G}lobal attractor and asymptotic smoothing effects for the weakly
  damped cubic {S}chr{\"o}dinger equation in ${L}^2(\mathbb{T})$.
\newblock {\em Dynamics of Partial Differential Equations 6}, 1 (2009), 15--34.

\bibitem{oh2020smoothing}
{\sc Oh, S., and Stefanov, A.}
\newblock Smoothing and growth bound of periodic generalized {K}orteweg-de
  {V}ries equation.
\newblock {\em arXiv preprint arXiv:2001.08984\/} (2020).

\bibitem{oh2012blowup}
{\sc Oh, T.}
\newblock A blowup result for the periodic {NLS} without gauge invariance.
\newblock {\em Comptes Rendus Mathematique 350}, 7-8 (2012), 389--392.

\bibitem{tao2008global}
{\sc Tao, T.}
\newblock A global compact attractor for high-dimensional defocusing non-linear
  {S}chr{\"o}dinger equations with potential.
\newblock {\em Dynamics of Partial Differential Equations 5}, 2 (2008),
  101--116.

\bibitem{temam2012infinite}
{\sc Temam, R.}
\newblock {\em Infinite-dimensional dynamical systems in mechanics and
  physics}, vol.~68.
\newblock Springer Science \& Business Media, 2012.

\bibitem{wang1995energy}
{\sc Wang, X.}
\newblock {A}n energy equation for the weakly damped driven nonlinear
  {S}chr{\"o}dinger equations and its application to their attractors.
\newblock {\em Physica D: Nonlinear Phenomena 88}, 3-4 (1995), 167--175.

\bibitem{wang2013periodic}
{\sc Wang, Y.}
\newblock Periodic {N}onlinear {S}chr{\"o}dinger {E}quation in {C}ritical
  ${H}^s(\mathbb{T}^n)$ {S}paces.
\newblock {\em SIAM Journal on Mathematical Analysis 45}, 3 (2013), 1691--1703.

\end{thebibliography}
\email{ryanm12@illinois.edu}
\end{document}